\newtheorem{theorem}{Theorem}
\newtheorem{lemma}[theorem]{Lemma}
\newtheorem{proposition}[theorem]{Proposition}
\newtheorem{definition}{Definition}
\newcommand\bx{\boldsymbol{x}}
\newcommand\bw{\boldsymbol{w}}
\newcommand\bn{\boldsymbol{n}}
\newcommand\bu{\boldsymbol{u}}
\newcommand\bv{\boldsymbol{v}}
\newcommand\bg{\boldsymbol{g}}
\newcommand\bh{\boldsymbol{h}}
\newcommand\bbR{\mathbb{R}}
\newcommand\bbN{\mathbb{N}}
\newcommand\bbS{\mathbb{S}}
\newcommand\bq{q}
\newcommand\bp{p}
\newcommand\dd{\,\mathrm{d}}
\newcommand\mQ{\mathcal{Q}}
\newcommand\mM{\mathcal{M}}
\newcommand\mF{\mathcal{F}}
\newcommand\halpha{\hat{\alpha}}
\newcommand\balpha{{\alpha}}
\newcommand\bbeta{{\beta}}
\newcommand\be{{e}}
\newcommand\blambda{{\lambda}}
\newcommand\bkappa{{\kappa}}
\newcommand{\imag}{\mathrm{i}}
\numberwithin{equation}{section}
\theoremstyle{remark} }
\newtheorem{corollary}{Corollary} 
\title{Approximation to Singular Quadratic Collision Model in
  Fokker-Planck-Landau Equation}
\author{Ruo Li\thanks{CAPT, LMAM \& School of Mathematical Sciences,
    Peking University, Beijing, China, email: {\tt
      rli@math.pku.edu.cn}.},~~ Yanli Wang\thanks{Department of
    Engineering, Peking University, Beijing, China, 100871, email:
    {\tt wang\_yanli@pku.edu.cn}.}, ~~Yixuan Wang\thanks{School of
    Mathematical Sciences, Peking University, Beijing, China, 100871,
    email: {\tt roywangyx@pku.edu.cn}.}, }
\begin{document}
\maketitle
\begin{abstract}
  We propose a Hermite-Galerkin spectral method to numerically solve
  the spatially homogeneous Fokker-Planck-Landau equation with
  singular quadratic collision model. To compute the collision model,
  we adopt a novel approximation formulated by a combination of a
  simple linear term and a quadratic term very expensive to
  evaluate. Using the Hermite expansion, the quadratic term is
  evaluated exactly by calculating the spectral coefficients. To deal
  with singularities, we make use of Burnett polynomials so that even
  very singular collision model can be handled smoothly. Numerical
  examples demonstrate that our method can capture low-order moments
  with satisfactory accuracy and performance.

\vspace*{4mm}
\noindent {\bf Keywords:} Fokker-Planck-Landau equation;
Hermite-Galerkin spectral method; Burnett polynomials; Quadratic collision
operator; Super singularity
\end{abstract}

\section{Introduction}

The Fokker-Planck-Landau (FPL) equation is a common kinetic model in
plasma physics, accelerator physics and astrophysics. It describes
binary collision between charged particles with long-range Coulomb
interaction, and is represented by a nonlinear partial
integro-differential equation.

As a classical result, the FPL operator is the limit of the Boltzmann
operator for a sequence of scatting cross sections which converge in a
convenient sense to a delta function at zero scattering angle
\cite{comparisonBuet1999}. The original derivation of the equation
based on this idea is due to Landau \cite{Landau1965}, and then
several work has been devoted to this problem, such as
\cite{ArsenBuryak, DegondLucquinDesreux, Rosenbluth}. Recently Villani
\cite{Villaniweak1998} has obtained a rigorous proof of this
asymptotic problem in the space homogeneous scenario. For the
mathematical properties of the FPL equation, such as the existence of
the solutions, we refer the reader to Villani \cite{Villanireview} and
the reference therein.

The numerical solution of the nonlinear kinetic equations, such as FPL
equation, also represents a real challenge for numerical method. This
is essentially due to the non-linearity, as well as the high dimension
of variables, which is seven for the full problem. Moreover, the
complex three dimensional integro-differential stiff
advection-diffusion operator in velocity space is also remarkably
difficult to deal with due to the high singularity.  Besides, this
integration has to be handled carefully since it is closely related to
the macroscopic properties, for example the collision term does not
change the total mass, momentum and energy. Several numerical
approaches have been brought up to solve FPL equation. Generally
speaking, there are two kinds of methods, stochastic methods and the
deterministic methods. For the stochastic methods, DSMC method which
is widely in the simulation of Boltzmann equation\cite{bird} is
adopted to solve FPL equation.  A detail discussion about the
stochastic method is beyond the scope of this paper, and we refer the
reader to \cite{DirectDimarco2010, BobylevMonte2013} for a much more
complete treatment. For the deterministic methods, due to the complex
form of the FPL operator, several numerical approaches are devoted to
the simpler diffusive Fokker-Planck model \cite{GuoTang2000,
  ZhangWei1997}, the space homogeneous situations in the isotropic
case \cite{BuetConservative1998} or cylindrically symmetric problems
\cite{ConservativePekker1984}. Moreover, Villani \cite{Villani1998on}
has brought up a linear collision model for the Maxwell molecules. The
construction of conservative and entropy schemes for the space
homogeneous case has been proposed in \cite{Degond1994,
  BuetNumerical1999}, where the main physical properties are all
satisfied. But the direct implementation of such schemes are all quite
expensive. Several fast approximated algorithms to reduce the
complexity of these methods, based on multipole expansion
\cite{Lemou1998} or multigrid techniques \cite{BuetConservative1998}
have been proposed. A fast spectral method based on Fourier spectral
approximation of the collision operator is introduced in
\cite{PARESCHI2000}, and it is then also utilized to solve
inhomogeneous FPL equation \cite{Deterministiczhang2016,
  Filbet2002numerical}. For the numerical stiffness of the
Fokker-Planck collision operator, the implicit time scheme is also
studied \cite{ImplicitLemou2005, massTaitano2015}. There is a certain
kind of asymptotic-preserving method that seeks to accelerate the
solution of the FPL equation by the so-called penalization techniques
\cite{JinYan2011}.

As another kind of spectral method, Hermite spectral method is also
utilized to solve FPL equation. Hermite method, where the basis functions
with weighted orthogonality in $\bbR^3$ are employed, 
dates back to Grad's work \cite{Grad} where it is used to solve the 
Boltzmann equation and is known as the moment method ever since. 
Besides, the expansion with respect to Burnett polynomials was proposed in 
\cite{Burnett1936, Gamba2018} to find the coefficients of the collision term
of the expansion in the Hermite basis. Using the
Hermite expansion, it is still a tough job to evaluate the exact
coefficients in the expansion of the collision operator, since the
computational cost for the quadratic from is hardly bearable 
and novel models need to be introduced. In a recent
work \cite{QuadraticCol}, the explicit expressions of all the
coefficients in the Hermite spectral method for the quadratic
Boltzmann collision operator are presented, and the new collision
model which can preserve the physical properties and reduce
computational cost at the same time was brought up using these
coefficients. It is much harder to evaluate these coefficients for the
quadratic FPL collision operator compared to the Boltzmann equation,
because of the high singularity and the operator of partial
derivative. In \cite{ExactPfefferle2017}, the coefficients for the
Coulombian case were evaluated numerically, and the explicit form was
listed for the first few moments.

Inspired by these work, we in this paper are devoted to the numerical
method for FPL equation with quadratic collision model, which may be
very singular. Following the approach in \cite{QuadraticCol}, we
approximate the collision model as the combination of a simple linear
term and a quadratic term. The idea is to take only a portion in the
truncated series expansion to be treated ``quadratically'', and the
remaining part is approximated by the linear collision operator
brought up by Villani \cite{Villani1998on}. This may greatly reduce
the computational cost and we can still capture the evolution of
physical variables accurately. The linear term can be handled easily,
while the difficulty imposed by the singularity in the quadratic
collision model remains. We reveal that by making use of Burnett
polynomials, the singular part of the integral in the collision
operator can be handled smoothly. For the typical case that the
repulsive force between molecules is proportional to a negative power
of their distance, our method can handle problems where the index for
the power of distance is as great as $-5$, in comparison to the index
fixed as $-3$ in \cite{ExactPfefferle2017}. To deal with the remaining
part in the quadratic term without singularity, the Hermite-Galerkin
spectral method is then adopted. We derive the explicit formulae for
all the coefficients in the Hermite expansion of the collision
operator, and these formulae can all be evaluated exactly offline for
immediate applications. Thus eventually the quadratic term is able to
be evaluated efficiently.

The rest of this paper is organized as follows. In Section
\ref{sec:Bol_Her}, we briefly review the FPL equation and the Hermite
expansion of the distribution function. In Section \ref{sec:general},
we first give an explicit expression of the series expansion of the
quadratic collision operator and then introduce precisely how to deal
with the singularity by Burnett polynomials. The construction of the
approximated collision model is presented in Section
\ref{sec:construction}. Some numerical experiments verifying the
effectiveness of our methods are carried out in Section
\ref{sec:numerical}. The concluding remarks and detailed derivation of
the expansions are given in Section \ref{sec:conclusion} and
\ref{sec:appendix} respectively.

\section{FPL equation and Hermite expansion}
\label{sec:Bol_Her}

We will first give a brief review of the FPL equation, and then
introduce the Hermite spectral method for the expansion of the distribution function.

\subsection{FPL equation}
\label{sec:FPL}
The Fokker-Planck-Landau equation is a prevalent kinetic model in
plasma physics, describing the state of the particles in terms of a
distribution function $f(t,\bx,\bv)$, where $t$ is the time
coordinate, $\bx$ represents the spatial coordinates, and $\bv$ stands
for the velocity of particles. The governing equation of $f$ is
\begin{equation}
\frac{\partial f}{\partial t} + \nabla_{\bx} \cdot (\bv f)  = \mQ[f],
  \qquad t\in \mathbb{R}^+, \quad \bx \in \mathbb{R}^3,
    \quad \bv \in \mathbb{R}^3,
\end{equation}
where $\mQ[f]$ is the collision operator with a quadratic form
\begin{equation} \label{eq:quad_col}
  \mQ[f](t,\bx,\bv) =  \nabla_{\bv} \cdot \int_{\mathbb{R}^3}
  A(\bv - \bv_{\ast})(f(\bv_{\ast})\nabla_{\bv}f(\bv) -
  f(\bv)\nabla_{\bv_{\ast}}f(\bv_{\ast})) \dd \bv_{\ast},
\end{equation}
where $A$ depends on the interaction between particles and is a
$3\times 3$ negative and symmetric matrix in the form
\cite{PARESCHI2000} of
\begin{equation}
  \label{eq:A}
  A(\bv) = \Psi(|\bv|) \Pi (\bv), 
\end{equation}
where $\Psi$ is a non-negative radial function, and $\Pi(\bv)$ is the
orthogonal projection upon the space orthogonal to $\bv$, as
$\Pi_{ij}(\bv) = \delta_{ij} - \frac{v_iv_j}{|\bv|^2}$.

We are primarily concerned with the IPL model, for which the force
between two molecules is always repulsive and proportional to a
negative power of their distance. In this case, the function
$\Psi(\bv)$ has the form
\begin{equation}
  \label{eq:IPL}
  \Psi(\bv) := \Lambda |\bv|^{\gamma + 2},
\end{equation}
where $\Lambda >0$ is a constant and $\gamma$ is the index of the
power of distance. This equation is obtained as a limit of the
Boltzmann equation, when all the collisions become grazing
\cite{Desvillettes1992}. In the case of the Boltzmann equation,
different $\gamma$ lead to different models.  The case $\gamma > 0$
corresponds to the ``hard potential'' case, whereas for $\gamma < 0$,
it corresponds to the case of ``soft potential''. In the critical case
$\gamma = 0$, the gas molecules are referred to as ``Maxwell
molecules''. Another case of interest is when $\gamma = -3$ of the
Coulombian case, which is a very important model for applications in
plasma.

We shall focus on the numerical approximation of $\mQ[f]$, especially
when $\gamma$ is very small. Our model of approximating the collision operator 
is best illustrated in the spatially homogeneous FPL equation case, namely
\begin{equation}
  \label{eq:homo}
  \frac{\partial f}{\partial t}   = \mQ[f],
  \qquad t\in \mathbb{R}^+, \quad \bv \in \mathbb{R}^3.
\end{equation}
As a classical result in kinetic equations, the steady state solution of this equation takes
the form of the Maxwellian:
\begin{equation}
  \label{eq:general_Maxwellian}
  \mathcal{M}_{\rho,\bu,\theta}(\bv)
    := \frac{\rho}{(2\pi\theta)^{3/2}}
      \exp \left( -\frac{|\bv - \bu|^2}{2\theta} \right),
\end{equation}
where the density $\rho$, velocity $\bu$ and temperature $\theta$ 
are defined as follows
\begin{equation}
  \label{eq:macro_var}
  \rho = \int_{\bbR^3} f(t,\bv) \dd \bv, \quad \bu =
  \frac{1}{\rho}\int_{\bbR^3}\bv f(t,\bv) \dd \bv, \quad \theta =
  \frac{1}{3\rho}\int_{\bbR^3}|\bv -\bu|^2 f(t,\bv) \dd \bv. 
\end{equation} 
Moreover, the physical variables such as the heat flux $q_i$ and the
stress tensor $\sigma_{ij}$ are also of interest. They are defined as
\begin{gather*}
  q_i = \frac{1}{2}\int_{\bbR^3}|\bv -\bu|^2(v_i-u_i)f\dd \bv, \qquad i = 1, 2, 3, \\
\sigma_{ij} = \int_{\bbR^3}
  \left( (v_i-u_i)(v_j -u_j) - \frac{1}{3} \delta_{ij} |\bv-\bu|^2 \right) f
\dd \bv, \quad i,j = 1,2,3. 
\end{gather*}
Similar to the Boltzmann equation, the collision operator preserves in
time the macroscopic quantities mass, momentum and energy. Therefore,
those are invariant quantities under evolution, and
\eqref{eq:macro_var} holds for any $t$. Thus we can
obtain
\begin{equation}
  \label{eq:u_theta}
  \rho = 1, \quad \bu  = 0, \quad \theta = 1,
\end{equation}
by selecting proper frame of
reference and applying appropriate non-dimensionalization.
Now the Maxwellian \eqref{eq:general_Maxwellian} is simply reduced to
\begin{equation}
  \label{eq:Maxwellian}
  \mathcal{M}(\bv) := \frac{1}{(2\pi)^{3/2}}
    \exp \left( -\frac{|\bv|^2}{2} \right).
\end{equation}
The heat flux and stress tensor are reduced into
\begin{gather*}
  q_i = \frac{1}{2}\int_{\bbR^3}|\bv|^2v_if\dd \bv, \qquad \sigma_{ij}
  =\int_{\bbR^3} \left( v_iv_j - \frac{1}{3} \delta_{ij} |\bv|^2
  \right) f \dd \bv, \quad i,j = 1,2,3.
\end{gather*}
The normalization \eqref{eq:u_theta} shall always be
assumed in the following context.

In the literature, the complicated
form of the collision operator $\mQ[f]$ is handled by introducing 
approximations of less complexity. For instance, for the Maxwell molecules
with $\Lambda=1$, if the distribution function $f$ is radially
symmetric, which is a property to be preserved under time evolution,
the collision operator can be rewritten as
\begin{equation} \label{eq:Linear}
\mQ^{\mathrm{linear}}[f] =
(D - 1) \nabla_{\bv} \cdot (\nabla f + f \bv), 
\end{equation}
which was proposed by C. Villani \cite{Villani1998on} . Here $D$ is the
dimension of the velocity space, and we always set $D =3$ in the
context. In this case, the FPL equation is reduced into the linear
Fokker-Planck equation (FP), which can be used to describe the
relation of Brownian molecules in a gas.

Due to the complex form of the FPL operator, several numerical
approaches are devoted to the simpler diffusive Fokker-Planck model or
on the reduced collision models \cite{POTAPENKO1999115,
  BEREZIN1987163}. Hence it is of high necessity to develop efficient numerical methods for
the original FPL equation with quadratic collision operator.

\subsection{Series expansion of distribution function}
\label{sec:expansion}
Our numerical discretization shall be based on the series
expansion in the weighted $L^2$ space of the distribution function 
$\mF = L^2(\bbR^3; \mM^{-1}\dd\bv)$:
\begin{equation} 
  \label{eq:expansion}
  f(t, \bv) = \sum_{|\balpha|=0}^{+\infty}f_{\balpha}(t)H^{\balpha}(\bv)\mM(\bv), 
\end{equation}
where $\mM(\bv)$ is the Maxwellian, and $\balpha=(\alpha_1, \alpha_2,
\alpha_3)^T$ is a  three-dimensional multi-index, and $|\balpha| = \alpha_1
+ \alpha_2 + \alpha_3$. 
In \eqref{eq:expansion}, $H^{\balpha}(\bv)$ are the Hermite
polynomials defined as follows:
\begin{definition}[Hermite polynomials]
  For $\alpha_i \in \mathbb{N}, i = 1,2, 3$, the Hermite polynomial
  $H^{\balpha}(\bv)$ is defined as
  \begin{equation}
    \label{eq:basis}
     H^{\balpha}(\bv) = \frac{(-1)^n}{\mathcal{M}(\bv)}
     \frac{\partial^{|\balpha|}}{\partial v_1^{\alpha_1} \partial
       v_2^{\alpha_2} \partial v_3^{\alpha_3}} \mathcal{M}(\bv),
  \end{equation}
  where $\mathcal{M}(\bv)$ is given in \eqref{eq:Maxwellian}.
\end{definition}
The expansion \eqref{sec:expansion} was introduced to solve Boltzmann
equations \cite{Grad}, where such an expansion was invoked to
deploy moment methods. We can derive moments
 based on the coefficients $f_{\balpha}$  from the orthogonality
of Hermite polynomials
\begin{equation}
  \label{eq:Her_orth}
  \int_{\bbR^3}H^{\balpha}(\bv)H^{\bbeta}(\bv)\mM(\bv)  \dd \bv
  = \delta_{\balpha,\bbeta} \balpha!,
\end{equation}
where $\delta_{\balpha, \bbeta}$ is defined as
$\delta_{\balpha,\bbeta} =
\prod\limits_{i=1}^3\delta_{\alpha_i,\beta_i}$
and $\balpha ! = \prod\limits_{i=1}^3\alpha_i!$. For example, by the
orthogonality aforementioned, we can insert the expansion
\eqref{eq:expansion} into the definition of $\rho$ in
\eqref{eq:macro_var} to get $f_{\boldsymbol{0}} = \rho$, where
$\boldsymbol{0} = (0, 0,0)$. In our case, the normalization
\eqref{eq:u_theta} gives us $f_{{\boldsymbol{0}}} = 1$. In a similar manner, we can see
from the other two equations in \eqref{eq:macro_var}
and \eqref{eq:u_theta} that
\begin{equation}
  f_{\be_i} = 0, \quad i = 1, 2,3, \qquad \sum_{i=1}^3f_{2\be_i}= 0, 
\end{equation}
where $\be_i$ is a three dimensional index whose $i$-th entry equals
$1$ and other entries equal zero.  The heat flux and stress tensor are
related to the coefficients by
\begin{displaymath}
q_j =  2 f_{3\be_j} + \sum_{k=1}^3f_{\be_j + 2\be_k}, \qquad \sigma_{ij} =
(1+\delta_{ij})f_{\be_i + \be_j}, \qquad i, j = 1, 2,3.
\end{displaymath}


\section{Approximation of quadratic collision term}
\label{sec:general}
In order to investigate the evolution of the coefficients
$f_{\balpha}$ in the expansion \eqref{eq:expansion}, we shall expand
the collision term under the same function space. The expansion of
collision operator of the linear type is rather straightforward. As
an example, the explicit form of expansion of \eqref{eq:Linear} in
three dimensional case is
\begin{equation}
  \label{eq:linear_exp}
  \mathcal{Q}^{\mathrm{linear}}[f] = 
  \sum_{|\balpha|=0}^{+\infty}Q_{\balpha}^{\mathrm{linear}} H^{\balpha}(\bv)\mM(\bv),
  \qquad  Q_{\balpha}^{\mathrm{linear}} = -(D-1)|\balpha| f_{\balpha}. 
\end{equation}
which comes as a consequence of the property that Hermite polynomials
can diagonalize the linear FP operator.  It is also intrinsically
implied by the fact that Fokker-Planck equation can be used in the
context of stochastic process while Hermite polynomials play a crucial
role in Brownian motion, but we shall not take the stochastic
perspective here.
 
We shall first discuss the series expansion of the quadratic collision
term $\mQ[f]$ defined in \eqref{eq:quad_col}, and then combine the
quadratic result with the linear-type collision operators to construct
collision models with better accuracy and less computational
complexity.

\subsection{Series expansions of quadratic collision terms}
Suppose the binary collision term $\mQ[f]$ is expanded into the following form
\begin{equation}
  \label{eq:S_expan}
  \mQ[f](\bv) = \sum_{|\balpha|=0}^{+\infty}
  Q_{\balpha}H^{\balpha}(\bv)\mM(\bv).
\end{equation}
Due to the orthogonality of Hermite polynomials, we get
\begin{equation}
\label{eq:S_k1k2k3}
Q_{\balpha} 
= \frac{1}{\balpha!}\int H^{\balpha}(\bv)\mQ[f](\bv) \dd \bv  
=\sum\limits_{|\blambda|=0}^{+\infty}\sum\limits_{|\bkappa|=0}^{+\infty} 
  A_{\balpha}^{\blambda, \bkappa}
  f_{\blambda}f_{\bkappa},
\end{equation}
where the last equality can be derived by inserting
\eqref{eq:expansion} into \eqref{eq:quad_col}, and
\begin{equation}
  \label{eq:coeA_detail}
  \begin{aligned}
    &    A_{\balpha}^{\blambda,\bkappa} = \frac{1}{\balpha!} 
    \int_{\mathbb{R}^3} H^{\balpha}(\bv) \nabla_{\bv}\cdot
    \int_{\mathbb{R}^3} A(\bv -
    \bv_{\ast})  \\
    &\qquad \Big(H^{\blambda}(\bv_{\ast})
    \mM(\bv_{\ast})\nabla_{\bv}\left(H^{\bkappa}(\bv)\mM(\bv)\right)
    -H^{\blambda}(\bv)\mM(\bv)\nabla_{\bv_{\ast}}
    \left(H^{\bkappa}(\bv_{\ast})\mM(\bv_{\ast})\right)\Big) \dd
    \bv_{\ast} \,\mathrm{d}\bv.
\end{aligned}
\end{equation}
The above formula is of an extremely complex form, with the evaluation
of every single coefficient requiring a six dimensional integration, as well as
differential operations. Granted this can be computed by numerical quadrature; 
the computational cost would be unbearble for getting all these coefficients.
Recently, in \cite{ExactPfefferle2017}, a strategy to simplify the
above integral is introduced for the Coulombian case
$\gamma = -3$, and the explicit values are given with 
small indices. In order to deal with this integral, we give the explicit
expressions of all the coefficients $A_{\balpha}^{\blambda, \bkappa}$
and enlarge the applicable region of these expressions to
$\gamma > -5$ for the quadratic collision kernel, which incorporates
the domain of definition for $\gamma$ in the IPL model. The main
results are summarized in the following theorem:

\begin{theorem} 
  \label{thm:coeA}
  The expansion coefficients of the collision operator $\mQ[f](\bv)$
  defined in \eqref{eq:S_k1k2k3} have the following form:
  \begin{equation}
    \label{eq:coeA}
    \begin{aligned}
      A_{\balpha}^{\blambda,\bkappa} =& 2^{(\gamma + 3 -
        |\balpha|)/2}\sum\limits_{s,t = 1}^3
      \sum\limits_{|{p}|=0}^{|\balpha|-1}
      \frac{\Lambda}{{q^{[s]}}!}
      \left(a_{p, r^{[t]}}^{\bkappa+\be_t,\blambda} -
        a_{p, r^{[t]}}^{\lambda, \kappa+e_t}\right)
      B_{r^{[t]}}^{q^{[s]}}(\gamma, s, t),
    \end{aligned}
  \end{equation}
  where $p = (p_1, p_2, p_3)^T$ is a three-dimensional multi-index and
  \begin{equation}
    \label{eq:ijl_A}
    q^{[s]} = \alpha -  e_s - p,
    \quad r^{[t]}= \lambda + \kappa + e_{t}  -
    p, \quad a_{p,q}^{\lambda, \kappa} 
    =  \prod_{i=1}^3a_{p_i q_i}^{\lambda_i\kappa_i},  \qquad  s, t = 1, 2, 3.    
  \end{equation}
  the sum is taken for the indices in the range if and only if each
  subindex is non-negative.
  
The coefficients $a_{pq}^{\lambda\kappa}$ and
$B_{p}^{q}(\gamma, s, t)$ are defined by
  \begin{equation}
    \label{eq:coea}
    a_{pq}^{\lambda\kappa} = 2^{-(p+q)/2} \lambda! \kappa!
    \sum_{s=\max(0,p-\kappa)}^{\min(p,\lambda)}
    \frac{(-1)^{q-\lambda+s}}{s!(\lambda-s)!(p-s)!(q-\lambda+s)!},
  \end{equation}
  and
  \begin{equation}
    \label{eq:coe_gamma}
    B_{p}^{q}(\gamma, s, t) := 
    -G_{st}(\gamma, p, q) +
    \delta_{st}\sum_{r = 1}^3G_{rr}(\gamma,  p, q), 
  \end{equation}
  where
  \begin{equation}
    \label{eq:eta}
    G_{st}(\gamma, p, q) = \int_{\bg \in
      \bbR^3} |\bg|^{\gamma}
    g_sg_tH^{p}(\bg)H^{q}(\bg)\mM(\bg) \dd \bg,
    \qquad s,t = 1, 2,3.
  \end{equation}
\end{theorem}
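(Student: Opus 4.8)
The plan is to reduce the six-dimensional integral defining $A_{\balpha}^{\blambda,\bkappa}$ in \eqref{eq:coeA_detail} to a finite sum of one-dimensional combinatorial factors times the three-dimensional radial integrals $G_{st}$ of \eqref{eq:eta}. First I would integrate by parts in $\bv$ to transfer the divergence onto $H^{\balpha}(\bv)$; the boundary terms vanish by Gaussian decay, and the recursion $\partial_{v_s}H^{\balpha}=\alpha_s H^{\balpha-\be_s}$ turns the prefactor $1/\balpha!$ into $\alpha_s/\balpha!=1/(\balpha-\be_s)!$. Next, the identities $\partial_{v_t}\bigl(H^{\bkappa}(\bv)\mM(\bv)\bigr)=-H^{\bkappa+\be_t}(\bv)\mM(\bv)$ and its analogue in $\bv_{\ast}$ collapse the inner integrand to $A_{st}(\bv-\bv_{\ast})\,\mM(\bv)\mM(\bv_{\ast})$ multiplied by the antisymmetric combination $H^{\blambda}(\bv_{\ast})H^{\bkappa+\be_t}(\bv)-H^{\blambda}(\bv)H^{\bkappa+\be_t}(\bv_{\ast})$, summed over $s,t\in\{1,2,3\}$.

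The key step is the orthogonal change of variables $\bw=(\bv+\bv_{\ast})/\sqrt2$, $\bg=(\bv-\bv_{\ast})/\sqrt2$, under which the weight factorizes as $\mM(\bv)\mM(\bv_{\ast})=\mM(\bw)\mM(\bg)$, the Jacobian is one, and $A_{st}(\bv-\bv_{\ast})=\Lambda\,2^{(\gamma+2)/2}\,|\bg|^{\gamma}\bigl(\delta_{st}|\bg|^2-g_sg_t\bigr)$; this last identity already explains the shape of $B_p^q(\gamma,s,t)$ in \eqref{eq:coe_gamma}. I would then expand every Hermite polynomial of $\bv$ and $\bv_{\ast}$ in the new variables through the addition formula $H^{\boldsymbol{\mu}}\bigl((\bw\pm\bg)/\sqrt2\bigr)=2^{-|\boldsymbol{\mu}|/2}\sum_{\boldsymbol{j}+\boldsymbol{k}=\boldsymbol{\mu}}\binom{\boldsymbol{\mu}}{\boldsymbol{j}}(\pm1)^{|\boldsymbol{k}|}H^{\boldsymbol{j}}(\bw)H^{\boldsymbol{k}}(\bg)$, or equivalently by expanding $\partial_{\bv}$ and $\partial_{\bv_{\ast}}$ as $\frac1{\sqrt2}(\partial_{\bw}\pm\partial_{\bg})$ acting on $\mM(\bw)\mM(\bg)$.

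Now the $\bw$-integral is performed by the orthogonality \eqref{eq:Her_orth}: it forces the $\bw$-index created by the expansion of $H^{\balpha-\be_s}(\bv)$ to equal the residual $\bw$-index $p$ carried by the remaining factors, annihilates all other terms, and leaves a factorial $p!$ together with $(\balpha-\be_s)!/q^{[s]}!$; since $\alpha_s(\balpha-\be_s)!/\balpha!=1$, only the $1/q^{[s]}!$ of \eqref{eq:coeA} survives. Gathering the binomial coefficients and signs from the two overlaid addition formulas yields, after a Vandermonde-type summation, precisely the coefficient $a_{pq}^{\blambda\bkappa}$ of \eqref{eq:coea} (whose intrinsic factor $2^{-(|p|+|q|)/2}$ absorbs one of the three powers of two); the remaining $\bg$-integral is $\Lambda\,2^{(\gamma+2)/2}\bigl(\delta_{st}\sum_{r}G_{rr}-G_{st}\bigr)=\Lambda\,2^{(\gamma+2)/2}B_{r^{[t]}}^{q^{[s]}}(\gamma,s,t)$, and multiplying the two surviving powers of two produces the overall prefactor $2^{(\gamma+3-|\balpha|)/2}$. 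The antisymmetric pair in the integrand becomes the difference $a_{p,r^{[t]}}^{\bkappa+\be_t,\blambda}-a_{p,r^{[t]}}^{\blambda,\bkappa+\be_t}$, and summing over $s,t$ and over the admissible $p$ — those with $q^{[s]},r^{[t]}\geq 0$, so that $|p|\le|\balpha|-1$ — gives \eqref{eq:coeA}.

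I expect the main obstacle to be the multi-index bookkeeping in the last step — carrying binomial coefficients, factorials and signs through the two superimposed addition formulas and the $\bw$-orthogonality without error — rather than anything conceptual, so I would relegate the detailed computation to the appendix. A separate point needing care is the integrability of $G_{st}(\gamma,p,q)$: near $\bg=0$ the integrand behaves like $|\bg|^{\gamma+2}$, since the factor $g_sg_t$ desingularizes the kernel, and with the three-dimensional volume element $\int_0 |\bg|^{\gamma+4}\,\dd|\bg|$ converges exactly when $\gamma>-5$, which pins down the stated range of validity (at infinity the Gaussian dominates). Finally, I would observe that $G_{st}$ can itself be evaluated in closed form by re-expanding the Hermite polynomials in Burnett polynomials — Laguerre polynomials times spherical harmonics — so that the angular integrals collapse by orthogonality of spherical harmonics and only elementary radial $\Gamma$-type integrals remain.
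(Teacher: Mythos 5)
Your proposal is correct and follows essentially the same route as the paper's proof in Appendix \ref{sec:Appendix_coeA}: integration by parts with the Hermite recursion formulas, the change to center-of-mass/relative velocities, expansion via the Hermite product/addition formula, and orthogonality in the center-of-mass variable, with your closing remarks on the $\gamma>-5$ integrability and the Burnett evaluation of $G_{st}$ matching the paper's subsequent development. The only cosmetic differences are your symmetric $1/\sqrt{2}$ normalization of the new variables and your on-the-fly Vandermonde derivation of the product coefficients $a_{pq}^{\lambda\kappa}$, which the paper instead imports as Lemma \ref{thm:Hermite_cv} from \cite{QuadraticCol}.
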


The proof of Theorem \ref{thm:coeA} can be found in Appendix
\ref{sec:Appendix_coeA}. Hence, we only have to compute
\eqref{eq:eta}. When $\gamma > -3$, it can be computed directly by the
recursive formula of the Hermite Polynomials following the method in
\cite{QuadraticCol}. However, for the Coulombian case $\gamma = -3$,
the recursive formula can not be adopted directly due to the
singularity induced by the small value of $\gamma$. In
\cite{ExactPfefferle2017}, the Coulombian case $\gamma = -3$ is
evaluated by adopting the special form of the quadratic collision term
there. In the next section, we will introduce a new
method to deal with the super singularity for a large region of
$\gamma$.

\subsection{Derivation of exact coefficients in super singular integral}
In this section, we will introduce a different method to calculate these
coefficients exactly and the applicable area of $\gamma$ is 
enlarged as well. In order to deal with the singularity , Burnett polynomials,
products of Sonine polynomials and solid spherical harmonics
\cite{Ikenberry}, are utilized here. Burnett polynomials are
introduced in \cite{Burnett1936} to approximate the distribution
function of Boltzmann equation, and was adopted in \cite{BurnettCol,
  Gamba2018} to reduce the quadratic collision operator. To be
concrete, the normalized form of the Burnett polynomials is
\begin{displaymath}
  B_{\hat{\balpha}}(\bv) = \sqrt{\frac{2^{1-\hat{\alpha}_1} \pi^{3/2} \hat{\alpha}_3!}
    {\Gamma(\hat{\alpha}_3+\hat{\alpha}_1+3/2)}}
  L_{\hat{\alpha}_3}^{(\hat{\alpha}_1+1/2)} \left( \frac{|\bv|^2}{2} \right) |\bv|^{\hat{\alpha}_1}
  Y_{\hat{\alpha}_1}^{\hat{\alpha}_2} \left( \frac{\bv}{|\bv|} \right), 
\end{displaymath}
where the index $\hat{\balpha}$ is defined as
$$\hat{\balpha} = (\hat{\alpha}_1, \halpha_2, \halpha_3)^T,
\quad \halpha_1,\halpha_3 \in \bbN, \quad \halpha_2 =
-\halpha_1,\cdots,\halpha_1.$$
Here $L_n^{(\beta)}(x)$ is the Laguerre polynomials
\begin{displaymath}
L_n^{(\beta)}(x) = \frac{x^{-\beta} \exp(x)}{n!}
  \frac{\mathrm{d}^n}{\mathrm{d}x^n}
  \left[ x^{n+\beta} \exp(-x) \right],
\end{displaymath} 
and $Y_l^m(\bn)$ is spherical harmonics
\begin{displaymath}
Y_l^m(\bn) = \sqrt{\frac{2l+1}{4\pi} \frac{(l-m)!}{(l+m)!}}
  P_l^m(\cos \theta) \exp(\mathrm{i} m \phi), \qquad
\bn = (\sin \theta \cos \phi, \sin \theta \sin \phi, \cos \theta)^T
\end{displaymath}
with $P_l^m$ the associate Legendre polynomial
\begin{displaymath}
P_l^m(x) = \frac{(-1)^m}{2^l l!} (1-x^2)^{m/2}
  \frac{\mathrm{d}^{l+m}}{\mathrm{d}x^{l+m}} \left[ (x^2-1)^l \right].
\end{displaymath}
By the orthogonality of Laguerre polynomials and spherical harmonics,
one can find that
\begin{equation}
\label{eq:orthogonality_Burnett}
\int_{\bbR^3} \overline{B_{\hat{\balpha}}(\bv)} B_{\hat{\bbeta}}(\bv)
  \mathcal{M}(\bv) \dd\bv =
\delta_{\hat{\balpha}, \hat{\bbeta}}.
\end{equation}

In order to reduce complexity, the symmetry of
$G_{st}(\gamma, \bp, \bq)$, which is stated in Lemma \ref{thm:G}, is
utilized first to reduce the cost of computation and storage.

\begin{lemma}
\label{thm:G}
For the expressions $G_{st}(\gamma, \bp, \bq)$, it holds that 
\begin{equation}
  \label{eq:G_1}
  G_{st}(\gamma, \bp, \bq)=G_{ts}(\gamma, \bp, \bq), \qquad s, t = 1,
  2, 3,
\end{equation}
and 
\begin{equation}
\begin{split}
 G_{11}(\gamma, \bp,  \bq)= G_{22}(\gamma, \Pi_{2}^1\bp,
 \Pi_{2}^1\bq) =  G_{33}(\gamma, \Pi_{3}^1\bp,
 \Pi_{3}^1\bq), \\
 G_{12}(\gamma, \bp,  \bq)= G_{13}(\gamma, \Pi_{3}^2\bp,
  \Pi_{3}^2\bq)  = G_{23}(\gamma, \Pi_{3}^1\bp,
  \Pi_{3}^1\bq).
\end{split}
 \end{equation}
 Here $\Pi_{i}^jp$ is a permutation operator which exchanges the
 $i$-th and $j$-th entries of $p$.
\end{lemma}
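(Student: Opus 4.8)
\noindent\emph{Proof plan.}\quad Relation \eqref{eq:G_1} needs no argument: the integrand in \eqref{eq:eta} contains the symmetric factor $g_sg_t = g_tg_s$, so $G_{st} = G_{ts}$. The real content is the block of permutation identities, and the plan is to deduce all of them from a single change of variables in \eqref{eq:eta}.

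The first step is to record the tensor-product structure of the Hermite basis. Setting $\mM_1(x) := (2\pi)^{-1/2}\exp(-x^2/2)$, so that $\mM(\bv) = \mM_1(v_1)\mM_1(v_2)\mM_1(v_3)$, the defining relation \eqref{eq:basis} factorizes as $H^{\balpha}(\bv) = \prod_{i=1}^3 \phi_{\alpha_i}(v_i)$, with $\phi_n(x) := (-1)^n \mM_1(x)^{-1}\frac{\mathrm{d}^n}{\mathrm{d}x^n}\mM_1(x)$ the one-variable Hermite polynomial of degree $n$. Letting $\Pi_i^j$ denote simultaneously the map that swaps the $i$-th and $j$-th coordinates of a vector and the map that swaps the $i$-th and $j$-th entries of a multi-index, this factorization immediately gives $H^{\bp}(\Pi_i^j\bg) = H^{\Pi_i^j\bp}(\bg)$ for every $\bp$.

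The second step is the substitution $\bg \mapsto \Pi_i^j\bg$ in the integral \eqref{eq:eta}. This transformation has Jacobian determinant of absolute value $1$, leaves both $|\bg|$ and $\mM(\bg)$ unchanged, sends the factor $g_sg_t$ to $g_{\pi(s)}g_{\pi(t)}$ with $\pi = (i\,j)$, and, by the previous step, turns $H^{\bp}(\bg)H^{\bq}(\bg)$ into $H^{\Pi_i^j\bp}(\bg)H^{\Pi_i^j\bq}(\bg)$. Hence
\begin{displaymath}
  G_{st}(\gamma, \bp, \bq) = G_{\pi(s)\pi(t)}(\gamma, \Pi_i^j\bp, \Pi_i^j\bq), \qquad \pi = (i\,j).
\end{displaymath}
Specializing $(i,j,s,t)$ to $(1,2,1,1)$, $(1,3,1,1)$ and $(2,3,1,2)$ reproduces, respectively, $G_{11}(\gamma,\bp,\bq) = G_{22}(\gamma,\Pi_2^1\bp,\Pi_2^1\bq)$, $G_{11}(\gamma,\bp,\bq) = G_{33}(\gamma,\Pi_3^1\bp,\Pi_3^1\bq)$ and $G_{12}(\gamma,\bp,\bq) = G_{13}(\gamma,\Pi_3^2\bp,\Pi_3^2\bq)$; taking $(1,3,1,2)$ gives $G_{12}(\gamma,\bp,\bq) = G_{32}(\gamma,\Pi_3^1\bp,\Pi_3^1\bq)$, which equals $G_{23}(\gamma,\Pi_3^1\bp,\Pi_3^1\bq)$ after applying \eqref{eq:G_1}. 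The argument is essentially bookkeeping, so there is no substantive obstacle; the one place that calls for a moment's attention is this last case, where the raw change of variables lands on $G_{32}$ rather than the stated $G_{23}$ and one must invoke the $s\leftrightarrow t$ symmetry to match the form in the statement. (One should also check that it is $\Pi_i^j$, and not its inverse, that acts on the multi-indices, but since each $\Pi_i^j$ appearing here is an involution this is automatic.)
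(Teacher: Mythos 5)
Your argument is correct: the paper states Lemma \ref{thm:G} without proof, and the change of variables $\bg \mapsto \Pi_i^j\bg$ in \eqref{eq:eta}, combined with the tensor-product factorization of the Hermite polynomials (so that $H^{\bp}(\Pi_i^j\bg) = H^{\Pi_i^j\bp}(\bg)$) and the invariance of $|\bg|^{\gamma}$, $\mM(\bg)$ and $\dd\bg$ under coordinate permutations, is exactly the intended symmetry argument. Your specializations all check out, including the one case where the substitution yields $G_{32}$ and the symmetry \eqref{eq:G_1} is needed to recover the stated $G_{23}$.
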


Based on Lemma \ref{thm:G}, we only have to compute two cases
$G_{33}(\gamma, \bp, \bq)$ and $G_{13}(\gamma, \bp, \bq)$.  In order
to handle the singularity in $G_{st}(\gamma, \bp, \bq)$, Hermite
polynomials in \eqref{eq:eta} is expressed by a linear combination of
the Burnett polynomials, precisely
\begin{equation}
  \label{eq:Her_Bur}
  H^{\balpha}(\bv) = \sum_{|\hat{\balpha}|_B = |\balpha|}
  C_{\hat{\balpha}}^{\balpha}B_{\hat{\balpha}}(\bv), \qquad   C_{\hat{\balpha}}^{\balpha} =
  \int_{\bbR^3}B_{\hat{\balpha}}(\bv)H^{\balpha}(\bv)\mM(\bv) \dd \bv,
\end{equation}
where $|\hat{\balpha}|_B = \halpha_1 + 2\halpha_3.$ Since both Hermite
and Burnett polynomials are orthogonal polynomials associated with the
same weight function, thus the coefficients
$C_{\hat{\balpha}}^{\balpha}$ defined in \eqref{eq:Her_Bur} are
nonzero only when the degrees of $H^{\balpha}$ and $B_{\hat{\balpha}}$
are equal, precisely $|\hat{\balpha}|_B = |\balpha|$. The detailed
algorithm to compute the coefficient $C_{\hat{\balpha}}^{\balpha}$ can
be found in \cite{BurnettCol} and we also explain that briefly in
Appendix \ref{sec:Appendix_CoeC}.  With the help of the Burnett
polynomials, we can finally get the exact value of
$G_{st}(\gamma, \bp, \bq).$ 

\begin{proposition}
  When $\gamma > -5$, $G_{st}(\gamma, \bp, \bq)$ defined in
  \eqref{eq:eta} can be simplified as
\begin{equation}
  \label{eq:detail_D}
  \begin{aligned}
    G_{st}(\gamma, \bp, \bq) & = 2^{(\gamma + 2)/2}\sum_{|\hat{\bp}|_B
      = |\bp|}\sum_{|\hat{\bq}|_B =
      |\bq|}C_{\hat{\bp}}^{\bp}C_{\hat{\bq}}^{\hat{\bq}}
    D_{\hat{p}_3,\hat{q}_3}^{\hat{p}_1\hat{q}_1} \\
    & K\left(\frac{\gamma + \hat{p}_1 + \hat{q}_1+3}{2}, \hat{p}_1 +
      \frac{1}{2}, \hat{q}_1 + \frac{1}{2}, \hat{p}_3,
      \hat{q}_3\right) F_{st}(\hat{p}_1, \hat{p}_2, \hat{q}_1,
    \hat{q}_2),
  \end{aligned}
\end{equation}
where
\begin{equation}
  \label{eq:F}
  F_{st}(\hat{p}_1, \hat{p}_2, \hat{q}_1, \hat{q}_2) =
  \int_{\bbS^2}n_sn_tY_{\hat{p}_1}^{\hat{p}_2}(\bn)Y_{\hat{q}_1}^{\hat{q}_2}(\bn) \dd \bn,
\qquad s, t = 1, 2, 3,
\end{equation}
where $n_s$ and $n_t$ are the $s$-th and $t$-th entries of the unit
vector $\bn$ in spherical coordinates
$\bn = (\sin \theta \cos \phi, \sin \theta \sin \phi, \cos \theta)^T$.
The parameters in \eqref{eq:detail_D} are defined as
$$D_{n_1n_2}^{l_1l_2} =  \sqrt{\frac{n_1! n_2!}{\Gamma(n_1+l_1+3/2)
    \Gamma(n_2+l_2 + 3/2)}}$$ and 
\begin{equation} 
  \begin{split}
    K(\mu, \alpha, \kappa, m, n) =
    (-1)^{m+n}\Gamma(\mu+1)\sum_{i=0}^{\min(m,n)} \binom{\mu -
      \alpha}{m - i} \binom{\mu - \kappa}{n - i} \binom{i + \mu}{i}.
  \end{split}
\end{equation}
\end{proposition}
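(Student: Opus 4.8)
The plan is to evaluate the integral in \eqref{eq:eta} by substituting the Hermite-to-Burnett expansion \eqref{eq:Her_Bur} for both $H^{\bp}$ and $H^{\bq}$, thereby reducing $G_{st}$ to a sum over the (finitely many) Burnett indices of integrals of the form
\begin{displaymath}
  \int_{\bbR^3} |\bg|^\gamma g_s g_t \,\overline{B_{\hat{\bp}}(\bg)} B_{\hat{\bq}}(\bg)\, \mM(\bg)\dd\bg.
\end{displaymath}
(The complex conjugate is harmless since $H^\bp$ is real; one simply uses a real form or tracks $\overline{Y}$.) The crucial gain is that each Burnett polynomial factors as a radial Laguerre part in $|\bg|^2/2$ times $|\bg|^{\hat p_1}$ times a spherical harmonic $Y_{\hat p_1}^{\hat p_2}(\bg/|\bg|)$, so after writing $g_s g_t = |\bg|^2 n_s n_t$ the integrand separates completely into a radial integral in $r = |\bg|$ and an angular integral over $\bbS^2$. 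The angular integral is exactly $F_{st}(\hat p_1,\hat p_2,\hat q_1,\hat q_2)$ as defined in \eqref{eq:F}, so nothing more needs to be done there; it is a standard Gaunt-type coefficient.

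Next I would carry out the radial integral. After the separation, collecting the normalization constants from the two Burnett polynomials (which is where the factor $D_{\hat p_3 \hat q_3}^{\hat p_1 \hat q_1} = \sqrt{\hat p_3!\,\hat q_3!/(\Gamma(\hat p_3+\hat p_1+3/2)\Gamma(\hat q_3+\hat q_1+3/2))}$ comes from, together with the $\sqrt{2^{1-\hat\alpha_1}\pi^{3/2}\cdots}$ prefactors and the $1/(2\pi)^{3/2}$ from $\mM$), the remaining radial integral is
\begin{displaymath}
  \int_0^\infty r^{\gamma + \hat p_1 + \hat q_1 + 2}\,
  L_{\hat p_3}^{(\hat p_1+1/2)}\!\left(\tfrac{r^2}{2}\right)
  L_{\hat q_3}^{(\hat q_1+1/2)}\!\left(\tfrac{r^2}{2}\right)
  e^{-r^2/2}\dd r.
\end{displaymath}
Substituting $x = r^2/2$ turns this into $2^{(\gamma+\hat p_1+\hat q_1+1)/2}$ times an integral of the shape $\int_0^\infty x^{\mu - 1} L_m^{(\alpha)}(x) L_n^{(\kappa)}(x) e^{-x}\dd x$ with $\mu = (\gamma+\hat p_1+\hat q_1+3)/2$, $\alpha = \hat p_1 + 1/2$, $\kappa = \hat q_1 + 1/2$, $m = \hat p_3$, $n = \hat q_3$. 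The claim is then that this last integral equals the function $K(\mu,\alpha,\kappa,m,n)$ in the statement; once this is established, bookkeeping the powers of $2$ (the $2^{(\gamma+2)/2}$ in \eqref{eq:detail_D} is what survives after combining the radial $2^{(\cdots)/2}$ with the Burnett and Maxwellian constants) finishes the proof.

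Therefore the real content — and the main obstacle — is the evaluation of $\int_0^\infty x^{\mu-1} L_m^{(\alpha)}(x) L_n^{(\kappa)}(x) e^{-x}\dd x$ in closed form when $\mu$ is \emph{not} an integer and the two Laguerre polynomials have different, non-matching parameters $\alpha \ne \kappa$ (the classical orthogonality relation does not apply). I would handle this by expanding each Laguerre polynomial in its explicit finite power-series form, $L_m^{(\alpha)}(x) = \sum_{j} (-1)^j \binom{m+\alpha}{m-j} x^j/j!$, multiplying the two series, integrating term by term against $x^{\mu-1}e^{-x}$ to get Gamma functions $\Gamma(\mu + j + k)$, and then recognizing the resulting double sum as a terminating hypergeometric ${}_3F_2$ (or, more elementarily, regrouping by the index $i$ of a Vandermonde-type summation) to collapse it to the single sum $(-1)^{m+n}\Gamma(\mu+1)\sum_{i=0}^{\min(m,n)}\binom{\mu-\alpha}{m-i}\binom{\mu-\kappa}{n-i}\binom{i+\mu}{i}$. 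The convergence constraint $\gamma > -5$ enters precisely here: it guarantees $\mu = (\gamma + \hat p_1 + \hat q_1 + 3)/2 > 0$ in the worst case $\hat p_1 = \hat q_1 = 0$, so that $x^{\mu-1}$ is integrable at the origin and every term-by-term Gamma integral converges; I would note this explicitly to justify the interchange of sum and integral. The symmetry reductions of Lemma \ref{thm:G} are not needed for the derivation itself but can be invoked afterward to limit the $F_{st}$ that must be tabulated.
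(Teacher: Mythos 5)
Your overall route is exactly the paper's: substitute the Hermite-to-Burnett expansion \eqref{eq:Her_Bur} into \eqref{eq:eta}, use the radial--angular factorization of the Burnett polynomials to split the integral into the Gaunt-type coefficient $F_{st}$ and a radial Laguerre integral, and evaluate the latter by the crossed-parameter formula. The paper simply cites \cite[eq.(10)]{Srivastava} for that formula, namely $\int_0^{+\infty} L_{m}^{(\alpha)}(s) L_{n}^{(\kappa)}(s) s^{\mu} e^{-s}\dd s = K(\mu,\alpha,\kappa,m,n)$, whereas you propose to re-derive it by expanding the Laguerre polynomials and resumming; that extra work is sound in principle and makes the argument more self-contained.

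There is, however, a concrete slip in your radial reduction: you dropped the Jacobian $r^2$ of the spherical volume element $\dd\bg = r^2\dd r\dd\bn$. The radial integrand carries $|\bg|^{\gamma}$ from the kernel, $|\bg|^{2}$ from $g_sg_t$, $|\bg|^{\hat{p}_1+\hat{q}_1}$ from the two Burnett factors, \emph{and} $r^{2}$ from the Jacobian, so the correct radial integral is $\int_0^{\infty} r^{\gamma+\hat{p}_1+\hat{q}_1+4} L_{\hat{p}_3}^{(\hat{p}_1+1/2)}(r^2/2)\, L_{\hat{q}_3}^{(\hat{q}_1+1/2)}(r^2/2)\, e^{-r^2/2}\dd r$, which under $x=r^2/2$ becomes $2^{\mu}\int_0^{\infty} x^{\mu} L_{\hat{p}_3}^{(\hat{p}_1+1/2)}(x)\, L_{\hat{q}_3}^{(\hat{q}_1+1/2)}(x)\, e^{-x}\dd x$ with $\mu=(\gamma+\hat{p}_1+\hat{q}_1+3)/2$ --- weight $x^{\mu}$, not $x^{\mu-1}$. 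This matters in three places: (i) the identity you need is the one with weight $x^{\mu}$, matching \eqref{eq:no_hg}; with your weight $x^{\mu-1}$ you would land on $K(\mu-1,\dots)$ rather than the stated $K(\mu,\dots)$; (ii) the power-of-two bookkeeping only produces the stated prefactor $2^{(\gamma+2)/2}$ from the factor $2^{\mu}$, not from $2^{\mu-1}$; (iii) your convergence justification is false as written: $\gamma>-5$ with $\hat{p}_1=\hat{q}_1=0$ gives $\mu>-1$, not $\mu>0$ (e.g.\ $\gamma=-4$ yields $\mu=-1/2$), and it is precisely the integrability of $x^{\mu}e^{-x}$ at the origin for $\mu>-1$ --- equivalently of $r^{\gamma+4}$ --- that encodes the hypothesis $\gamma>-5$. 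Once the Jacobian is restored, all three points fall into place and your argument coincides with the paper's.
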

\begin{proof}
  Substituting \eqref{eq:Her_Bur} into \eqref{eq:eta}, and adopting
  the formula introduced in \cite[eq.(10)]{Srivastava}
\begin{equation} \label{eq:no_hg}
  \begin{split}
    \int_0^{+\infty} L_{m}^{(\alpha)} (s) L_{n}^{(\kappa)} (s) s^{\mu}
    \exp(-s) \dd s = (-1)^{m+n}\Gamma(\mu+1)\sum_{i=0}^{\min(m,n)}
    \binom{\mu - \alpha}{m - i} \binom{\mu - \kappa}{n - i} \binom{i +
      \mu}{i},
  \end{split}
\end{equation}
we have thus validated this proposition. 
\end{proof}

Finally, as stated previously in Lemma \ref{thm:G}, we only need to
compute $G_{33}(\gamma, \bp, \bq)$ and $G_{13}(\gamma, \bp, \bq)$.
Therefore, only these two corresponding cases of \eqref{eq:F} are
discussed in the following theorem and the proof is presented in
Appendix \ref{sec:Appendix_coeF}.

\begin{theorem}
\label{thm:coeF}
  Define $\eta_{lm}^{\mu}$ as
  \begin{equation}
    \label{eq:coe_eta}
    \eta_{lm}^{\mu} = \sqrt{\frac{[l + (2\delta_{1, \mu} - 1)m +
        \delta_{1, \mu}] [l - (2\delta_{-1, \mu} - 1)m + \delta_{-1,
          \mu}]} {2^{|\mu|}(2l -1)(2l+1)}}.
\end{equation}
Then the coefficients $F_{13}(l_1, m_1, l_2, m_2)$ and
$F_{33}(l_1, m_1, l_2, m_2)$ have the following explicit form
\begin{equation}
  \begin{aligned}
   & F_{13}(l_1, m_1, l_2, m_2)   =
    \frac{(-1)^{m_2+1}}{\sqrt{2}}\sum_{k,j,l=0,1}
    (-1)^{l+j}\eta_{\delta_{0k}+(-1)^kl_2, m_2}^0
    \eta_{(-1)^jl_1+\delta_{0j}, m_1}^{(-1)^l}
    \delta_{l_1 + \delta_{1k} - \delta_{1j} ,
      l_2  -\delta_{1k} +\delta_{1j}}^{m_1+(-1)^l, -m_2}, \\
  &  F_{33}(l_1, m_1, l_2, m_2) =(-1)^{m_2} \sum_{k, j=0,1} \eta_{\delta_{0k}+(-1)^kl_2, m_2}^0
    \eta_{(-1)^jl_1+\delta_{0j}, m_1}^0  \delta_{l_1 + \delta_{1k} - \delta_{1j} ,
      l_2  -\delta_{1k} +\delta_{1j}}^{m_1, -m_2}.
    \end{aligned}
\end{equation}
\end{theorem}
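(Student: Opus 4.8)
The plan is to reduce everything to the classical three‑term recurrences for spherical harmonics together with the orthonormality relation $\int_{\bbS^2}Y_a^b\,\overline{Y_c^d}\dd\bn=\delta_{ac}\delta_{bd}$ and the conjugation identity $\overline{Y_l^m}=(-1)^m Y_l^{-m}$. First I would observe that the Cartesian components of $\bn$ are, up to normalization, degree‑one spherical harmonics: $n_3=\cos\theta$ and $n_1=\sin\theta\cos\phi=\tfrac12\bigl(\sin\theta\,e^{\imag\phi}+\sin\theta\,e^{-\imag\phi}\bigr)$. Multiplying a generic $Y_l^m$ by these quantities is governed by the standard recurrences
\[
\cos\theta\,Y_l^m=\eta_{l+1,m}^0\,Y_{l+1}^m+\eta_{l,m}^0\,Y_{l-1}^m ,
\]
together with the analogous pair of relations for $\sin\theta\,e^{\pm\imag\phi}\,Y_l^m$, which shift both the degree by one and the order by $\pm1$. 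The point is that, after the natural $\sqrt2$‑type renormalization coming from writing $n_1=\tfrac12(\sin\theta e^{\imag\phi}+\sin\theta e^{-\imag\phi})$, the recurrence coefficients collapse into exactly the quantities $\eta_{l,m}^{\pm1}$ from \eqref{eq:coe_eta}. Checking this identification --- that the coefficients really assemble into the single closed form in \eqref{eq:coe_eta} with its $\delta_{1,\mu}$ and $\delta_{-1,\mu}$ indicators, that $\eta^0$ is even in its order argument, and that $\eta_{l,m}^\mu=\eta_{-l,m}^\mu$ so that the compact arguments $(-1)^j l+\delta_{0j}$ in the statement are legitimate --- is the first bookkeeping step.

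Next, for $F_{33}$ I would write $F_{33}(l_1,m_1,l_2,m_2)=\int_{\bbS^2}(n_3 Y_{l_1}^{m_1})(n_3 Y_{l_2}^{m_2})\dd\bn$, use $Y_{l_2}^{m_2}=(-1)^{m_2}\overline{Y_{l_2}^{-m_2}}$ and the reality of $n_3$ to turn the second factor into $(-1)^{m_2}\overline{n_3 Y_{l_2}^{-m_2}}$, expand $n_3 Y_{l_1}^{m_1}$ and $n_3 Y_{l_2}^{-m_2}$ by the recurrence above (two terms each, indexed by $j$ and $k$), and integrate term by term using orthonormality. The surviving products are precisely those in which the two resulting degrees agree and the two orders are opposite: the degree‑matching condition $l_1+1-2\delta_{1j}=l_2+1-2\delta_{1k}$ rearranges to $l_1+\delta_{1k}-\delta_{1j}=l_2-\delta_{1k}+\delta_{1j}$, the order condition is $m_1=-m_2$, and each surviving coefficient is a product of two $\eta^0$'s --- which is exactly the stated formula for $F_{33}$. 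For $F_{13}$ the argument is identical except that the first factor is $n_1 Y_{l_1}^{m_1}$, whose expansion carries one extra binary index (the $l\in\{0,1\}$ in the statement) recording whether the order is raised or lowered, with coefficient $\eta^{(-1)^l}$; the signs from the two $\sin\theta e^{\pm\imag\phi}$ recurrences (and the Condon--Shortley phase) combine into the factor $(-1)^{l+j}$, the global $1/\sqrt2$ comes from $n_1=\tfrac12(\sin\theta e^{\imag\phi}+\sin\theta e^{-\imag\phi})$ after the $\eta^{\pm1}$ renormalization, and the conjugation identity applied to $Y_{l_2}^{m_2}$ supplies $(-1)^{m_2+1}$.

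The substantive content is elementary: only recurrences and orthogonality are used, so I expect no conceptual obstacle. The main difficulty is purely organizational --- keeping the four sign/shift choices (degree up or down on each factor, and order up or down on the $n_1$‑factor) consistent with the compact conventions $(-1)^j,(-1)^k,(-1)^l$ and with the $\delta$‑subscript/superscript notation, and in particular getting every sign right: the $(-1)^{l+j}$ inside the sum and the global $(-1)^{m_2}$ versus $(-1)^{m_2+1}$, which hinges on a fixed and consistent phase convention for $Y_l^m$ and for the recurrences. I would therefore fix the conventions explicitly at the outset (Condon--Shortley, with the recurrences in the form above) and carry them through; this is precisely the detail the appendix referenced after the statement is meant to supply. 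A Gaunt‑integral / Clebsch--Gordan alternative --- expanding $n_3^2$ and $n_1 n_3$ in $Y_0^0,Y_2^M$ --- also works, but it trades the transparent two‑step structure that directly mirrors the stated formula for a longer reconciliation with $3j$‑symbols, so I would keep the recurrence approach.
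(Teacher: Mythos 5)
Your proposal is correct and follows essentially the same route as the paper's own proof: write $n_1$ and $n_3$ as degree-one spherical harmonics as in \eqref{eq:n_Y}, expand each product $Y_1^{\mu}Y_l^m$ by the three-term relation \eqref{eq:Y} of Lemma~\ref{thm:coe_Y}, then conjugate one factor via $\overline{Y_l^m}=(-1)^mY_l^{-m}$ and integrate term by term using orthonormality, exactly as the appendix does. One small caveat: the symmetry $\eta_{l,m}^{\mu}=\eta_{-l,m}^{\mu}$ you list as a bookkeeping item to verify holds only for $\mu=0$ and fails for $\mu=\pm1$, but it is not actually needed, since the lowering term of \eqref{eq:Y} already carries the argument $-l$ and this is precisely what the compact index $(-1)^jl_1+\delta_{0j}$ in the theorem encodes.
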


The above analysis shows that for the FPL collision operator, the
coefficients $A_{\balpha}^{\blambda,\bkappa}$ can be calculated
exactly for $\gamma > -5$, which makes it much easier to build the
high order scheme to numerically solve FPL equation. Moreover, this
algorithm for the coefficients here is readily applicable for offline
numerical evaluation, the effectiveness of which is corroborated by
our numerical examples.

\section{Construction of novel collision model}
\label{sec:construction}
Until now, we already obtain a complete algorithm to calculate the
coefficients $A_{\balpha}^{\blambda,\bkappa}$, which can
be utilized either to discretize the quadratic  collision term or to construct new
collision models.  We will now discuss both topics.

\subsection{Discretization of homogeneous FPL equation}

We will discrete the homogeneous FPL equation
by the Galerkin spectral method in terms of the expansion of the distribution function
\eqref{eq:expansion}. For any positive integer $M$, we
define as the functional space of numerical solution
\begin{equation}
  \label{eq:set}
\mathcal{F}_M=\text{span}\{H^{\balpha}(v)M(v)|\balpha\in I_M\}
\subset \mathcal{F}=L^2(\mathbb{R}^3;\mM^{-1}dv),
\end{equation}
where
$I_M=\{(\alpha_1, \alpha_2, \alpha_3)| 0 \leqslant |\balpha| \leqslant
M, \alpha_i \in \mathbb{N},i=1,2,3\}$.
Then the semi-discrete function $f_M(t, \cdot)\in \mF_M$
satisfies
\begin{equation} \label{eq:var}
\int_{\mathbb{R}^3} \frac{\partial f_M}{\partial t} \varphi
  \mathcal{M}^{-1} \,\mathrm{d}\bv =
\int_{\mathbb{R}^3} \mQ(f_M, f_M) \varphi \mathcal{M}^{-1} \dd \bv,
  \qquad \forall \varphi \in \mF_M.
\end{equation}
Suppose
\begin{equation} 
  \label{eq:f_h}
  f_M(t,\bv) = \sum\limits_{\balpha \in I_M}
    f_{\balpha}(t)H^{\balpha}(\bv)\mM(\bv) \in \mF_M.
\end{equation}
The equations \eqref{eq:S_expan} and \eqref{eq:S_k1k2k3} imply that the
variational form \eqref{eq:var} is equivalent to the ODE
system below:
\begin{equation} \label{eq:ODE}
\frac{\mathrm{d}f_{\balpha}}{\mathrm{d}t} =
  \sum\limits_{\blambda  \in I_M} \sum\limits_{\bkappa \in I_M} 
    A_{\balpha}^{\blambda,\bkappa}f_{\blambda}f_{\bkappa},
\qquad \balpha \in I_M.
\end{equation}

We thus obtain the formulation of the ODE system in its full form \eqref{eq:ODE}, 
with the help of the exact coefficients $A_{\balpha}^{\blambda, \bkappa}$ for all
$\balpha, \blambda, \bkappa \in I_M$. 
With $M$ fixed, these coefficients need to be computed only
once, and then can be used repeatedly for multiple numerical examples.

\subsection{Approximation to general collision model}

In practical computation, the storage cost of computing the
coefficients $A_{\balpha}^{\blambda, \bkappa}$ is formidably
expensive, as the number of coefficients increases significantly with
$M$ increasing. Moreover, the computational cost $O(M^9)$ is an issue
especially when solving the spatially non-homogeneous problems.

To overcome this difficulty, the method in \cite{QuadraticCol} is
utilized to reduce the computational cost, precisely that the coefficients
$A_{\balpha}^{\blambda, \bkappa}$ for a small number $M_0$ are
computed and stored, when the computational cost for solving
\eqref{eq:ODE} is acceptable. As for $\balpha \not\in I_{M_0}$, we apply
the linear model \eqref{eq:Linear} brought up by Villani and compute as:
\begin{equation}
  \label{eq:linear_ode}
   \frac{\dd f_{\balpha}}{\dd t}  = -(D-1)|\balpha| f_{\balpha}, \qquad
   \balpha \not\in I_{M_0}.
\end{equation}
Combining \eqref{eq:ODE} and \eqref{eq:linear_ode}, we obtain a
novel collision operator 
\begin{equation}
  \label{eq:collision_new}
  \mQ^{M_0}[f] = P_{M_0}\mQ[P_{M_0}f]  - \mQ^{\rm linear}[(I - P_{M_0})f], \qquad
  \forall f \in \mF,
\end{equation}
where $P_{M_0}$ is the orthogonal projection from $\mF$ onto
$\mF_{M_0}$. After applying spectral method 
to this collision operator in the functional space $\mF_M$, 
where $M$ is chosen to be larger than ${M_0}$
the final ODE system for the new model is 
\begin{equation}
  \label{eq:new_collision}
  \frac{\dd f_{\balpha}}{\dd t} = \mQ^{M, M_0}_{\balpha}, 
\end{equation}
where 
\begin{equation}
  \label{eq:coe_newcollision}
  \mQ^{M, M_0}_{\balpha} = \left\{ 
    \begin{array}{cc}
      \sum\limits_{\blambda  \in I_{M_0}} \sum\limits_{\bkappa \in I_{M_0}} 
    A_{\balpha}^{\blambda,\bkappa}f_{\blambda}f_{\bkappa}, 
      & \balpha \in I_{M_0}, \\
      -(D-1)|\balpha| f_{\balpha}, 
      &  \balpha \in I_M \setminus I_{M_0}
    \end{array}
\right.
\end{equation}

By now, we have obtained a series of new collision models
\eqref{eq:collision_new}. It can be expected that such combination
could reduce the time cost significantly due to the simple form of the
linear FP collision operator in the Hermite basis, while at the same
time manage to maintain a high level of accuracy since the evolution
function already captures the most crucial information in coefficients
of lower order and performs a satisfactory approximation in the other
coefficients. This will be observed in the numerical examples.


\section{Numerical examples}
\label{sec:numerical}
In this section, we shall present several results in our numerical
computation. In all of the numerical experiments, we shall adopt the newly
proposed collision operator \eqref{eq:collision_new}, and solve the
equation
\begin{displaymath}
  \frac{\partial f}{\partial t} = \mQ^{M_0}[f],
\end{displaymath}
numerically for some positive integer $M_0$. Such an equation is solved
by the Galerkin spectral method for solution defined in the functional space
$\mF_M$, with $M$ chosen to be greater than $M_0$. 
Namely we solve the system of ODE \eqref{eq:coe_newcollision}.
As for the discretization in time, we use the 4th-order Runge-Kutta method in
the examples, and the time step is chosen as $\Delta t = 0.01$. In
the examples, we shall set $\Lambda = 1.$

Finally, we would like to mention that the derivation of the expansion coefficients 
in the Hermite basis are exact in each case by mathematical derivation
 instead of numerical integration in order to achieve high accuracy.

\subsection{BKW solution}
For the Maxwell gas $\gamma = 0$, the original 
FPL  equation admits an exact solution with the following expression:
\begin{displaymath}
f(t,\bv) = (2\pi \tau(t))^{-3/2} \exp \left( -\frac{|\bv|^2}{2\tau(t)} \right)
  \left[ 1 + \frac{1-\tau(t)}{\tau(t)}
    \left( \frac{|\bv|^2}{2\tau(t)} - \frac{3}{2} \right) \right],
\end{displaymath}
where $\tau(t) = 1 - 0.4\exp\left(-4t\right)$. As a good
approximation of the initial distribution function, we use $M = 15$
($816$ degrees of freedom) in our simulation. For the visualization
purpose, we define the marginal distribution functions (MDFs)
\begin{displaymath}
  g(t,v_1) = \int_{\mathbb{R}^2} f(t,\bv) \,\mathrm{d}v_2 \,\mathrm{d}v_3, \qquad
  h(t,v_1,v_2) = \int_{\mathbb{R}} f(t,\bv) \,\mathrm{d}v_3.
\end{displaymath}
The initial MDFs are plotted in Figure \ref{fig:ex1_init}, in which
the lines for exact functions and their numerical approximation are
hardly distinguishable.

\begin{figure}[!ht]
\centering
\subfloat[Initial MDF $g(0,v_1)$\label{fig:ex1_init_1d}]{%
  \includegraphics[width=.28\textwidth]{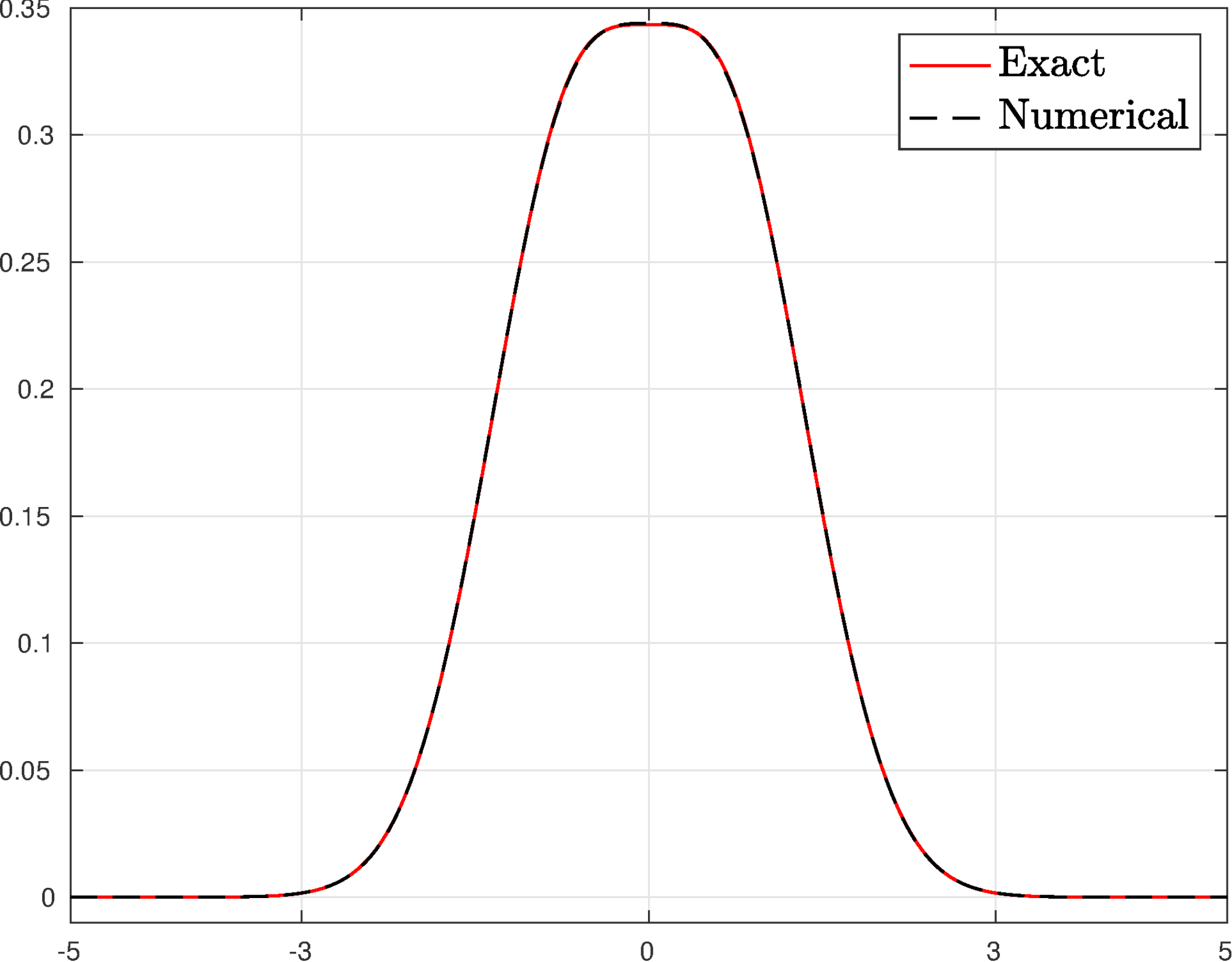}
} \quad 
\subfloat[Contours of $h(0,v_1,v_2)$\label{fig:ex1_init_2d_contour}]{%
  \includegraphics[width=.29\textwidth]{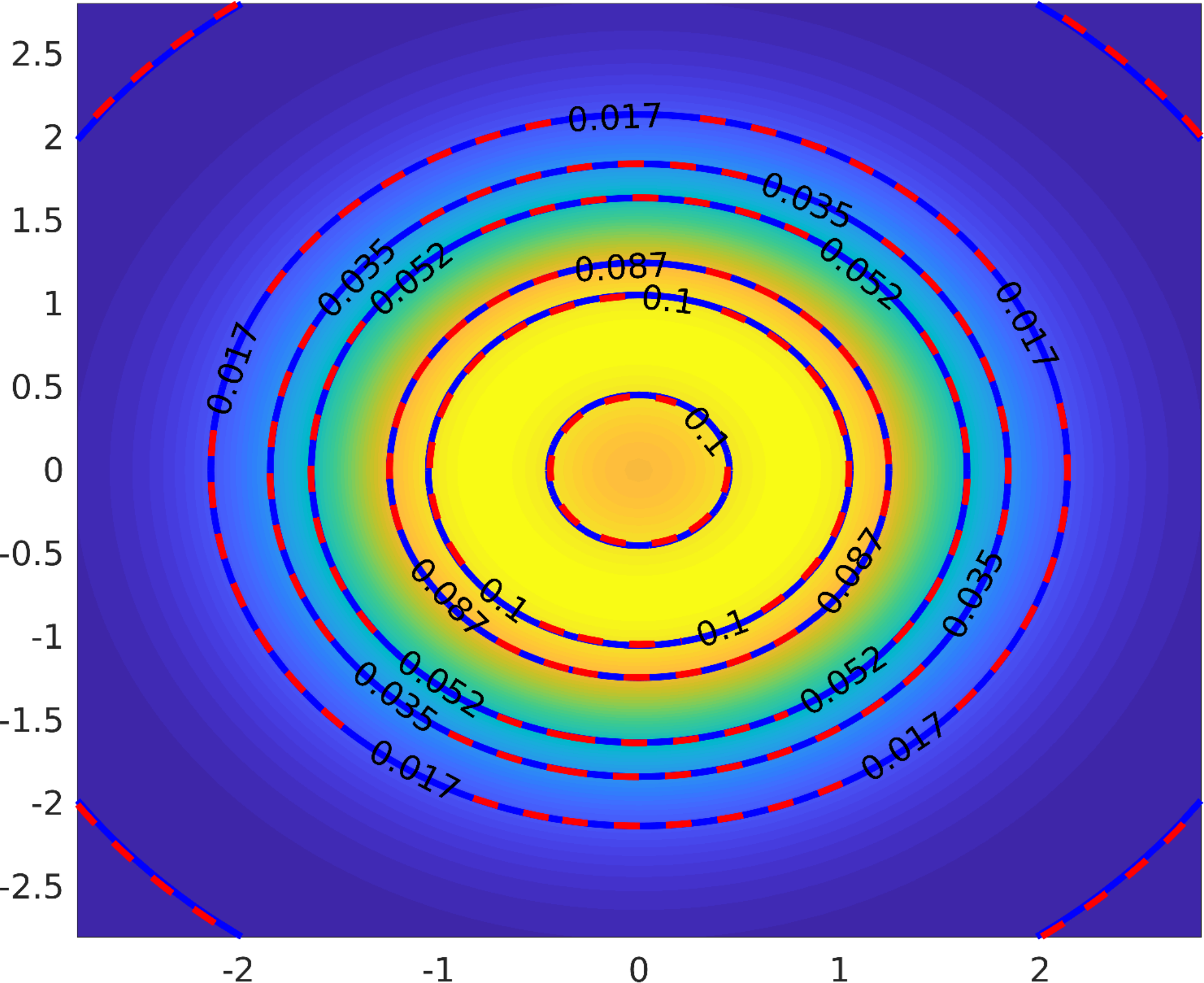}
} \quad
\subfloat[Initial MDF $h(0,v_1,v_2)$\label{fig:ex1_init_2d}]{%
  \includegraphics[width=.29\textwidth]{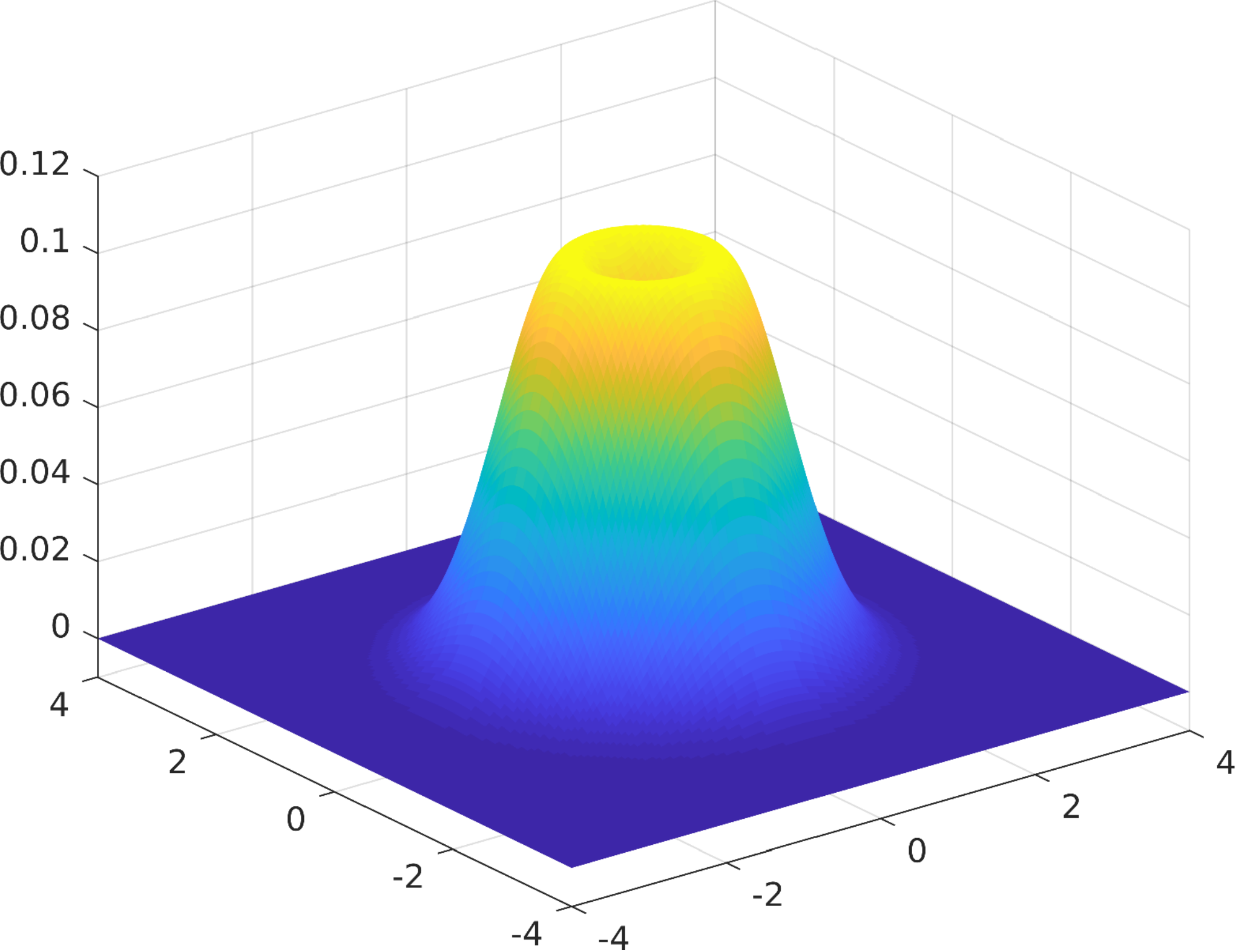}
}
\caption{Figure (a) is the initial marginal distribution functions
  $g(0, v_1)$.  The red solid line corresponds to the exact solution,
  and the blue dashed line corresponds to the numerical approximation.
  Figure (b) is the initial marginal distribution functions
  $h(0, v_1, v_2)$. The blue solid lines correspond to the exact
  solution, and the red dashed lines correspond to the numerical
  approximation. Figure (c) shows only the numerical approximation.}
\label{fig:ex1_init}
\end{figure}

Numerical results for $t = 0.01$, $0.02$ and $0.06$ are given in
Figures \ref{fig:ex1_1d} and \ref{fig:ex1_2d}, respectively for the
marginal distribution function $g(t, v_1)$ and $h(t, v_1, v_2)$. Here
we set $M_0$ as $M_0 = 5$ and $15$. For $M_0 = 5$, the numerical
solution provides a reasonable approximation, but still with
noticeable deviations, while for $M_0 = 15$, the two solutions match
perfectly in all cases.
\begin{figure}[!ht]
\centering
\subfloat[$t = 0.01$]{%
  \includegraphics[width=.3\textwidth]{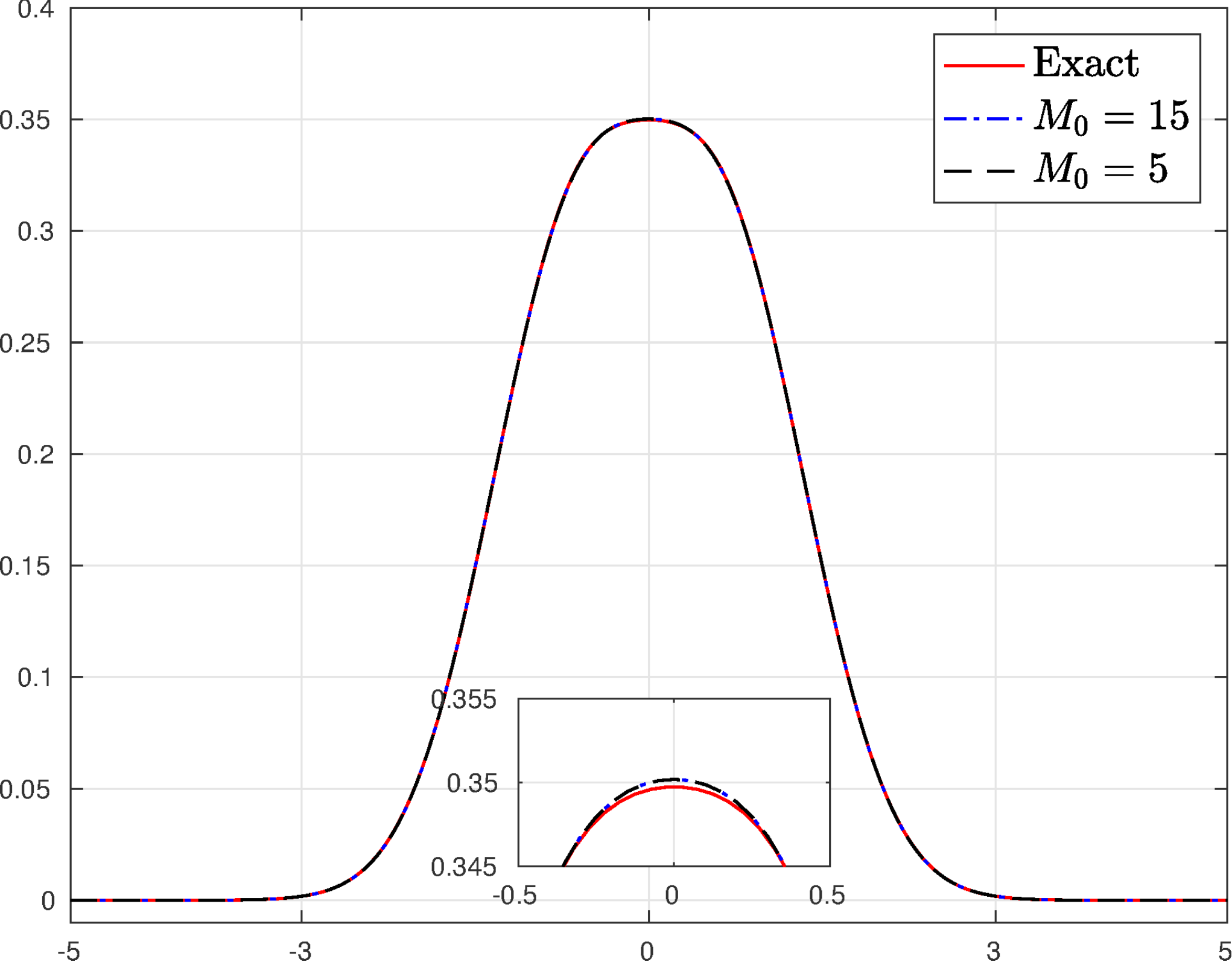}
} \hfill
\subfloat[$t = 0.02$]{%
  \includegraphics[width=.3\textwidth]{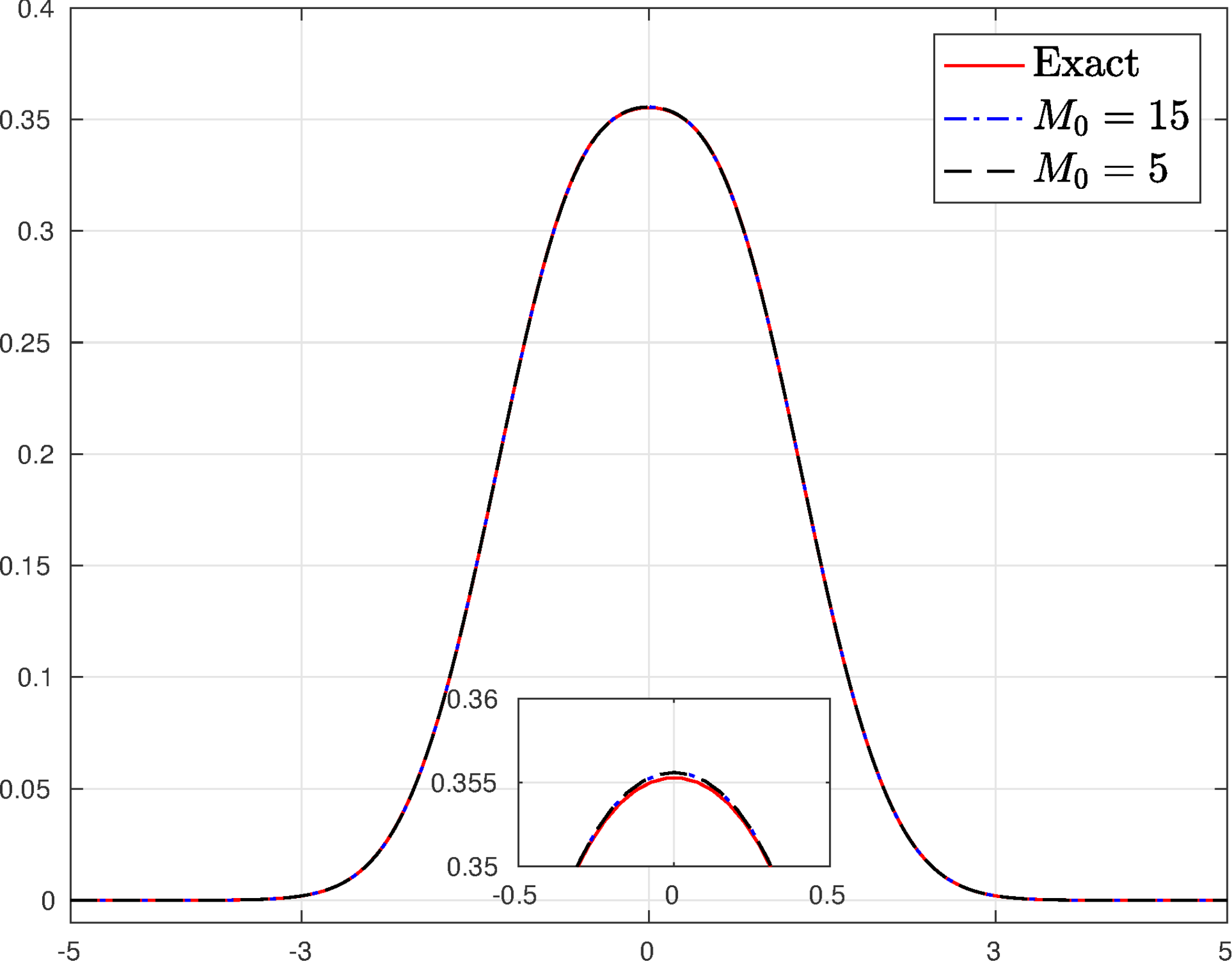}
} \hfill
\subfloat[$t = 0.06$]{%
  \includegraphics[width=.3\textwidth]{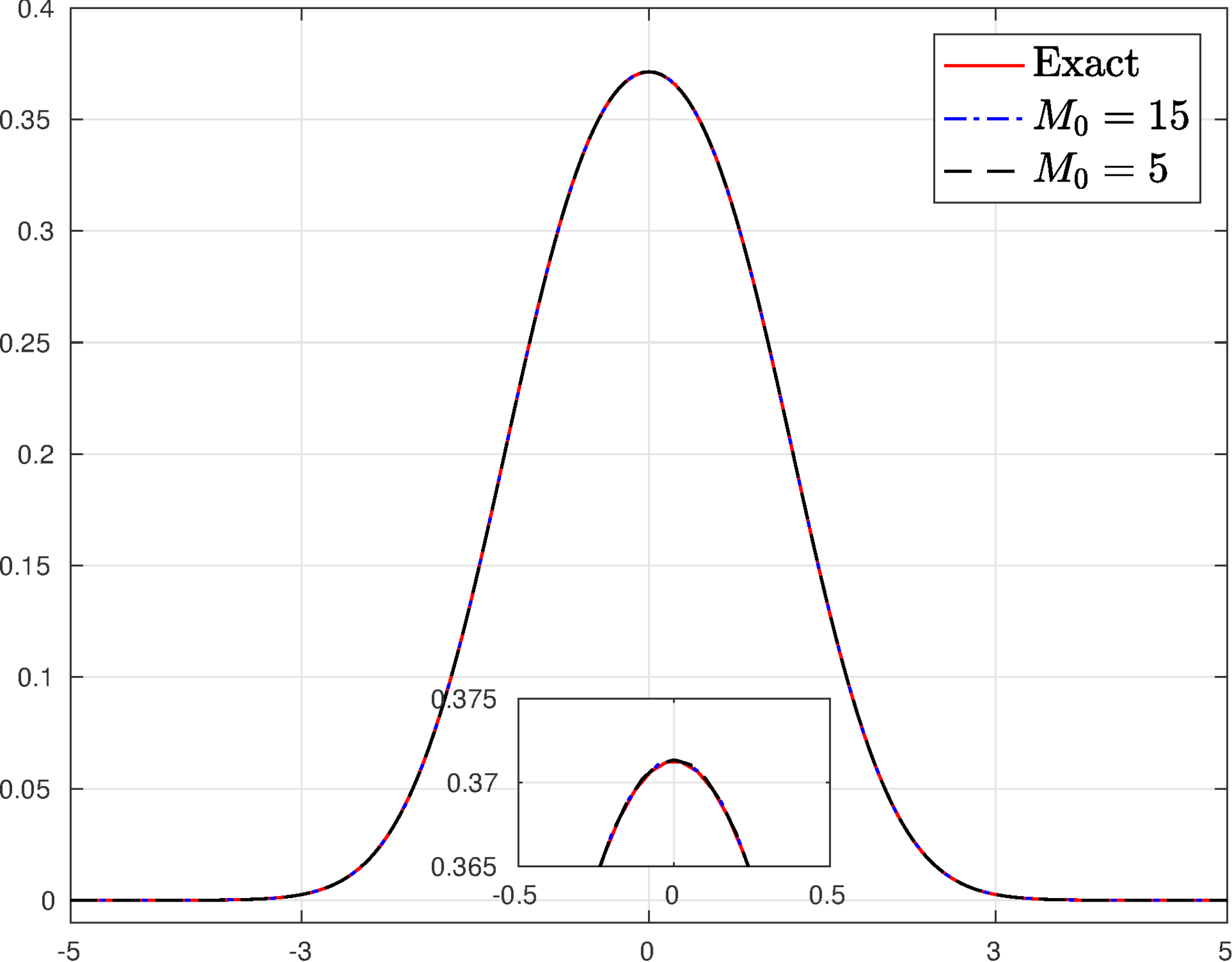}
} 
\caption{Marginal distribution functions $g(t, v_1)$ for $M_0 = 5$ and
  $15$ at $t = 0.01$, $0.02$ and $0.06$. The red solid lines
  correspond to the exact solution, and the blue dot dashed and black
  dashed lines correspond to the numerical solutions with $M_0 = 15$
  and $5$ respectively.}
\label{fig:ex1_1d}
\end{figure}

\begin{figure}[!ht]
\centering
\subfloat[$t = 0.01, M_0 = 5$]{%
  \includegraphics[width=.33\textwidth]{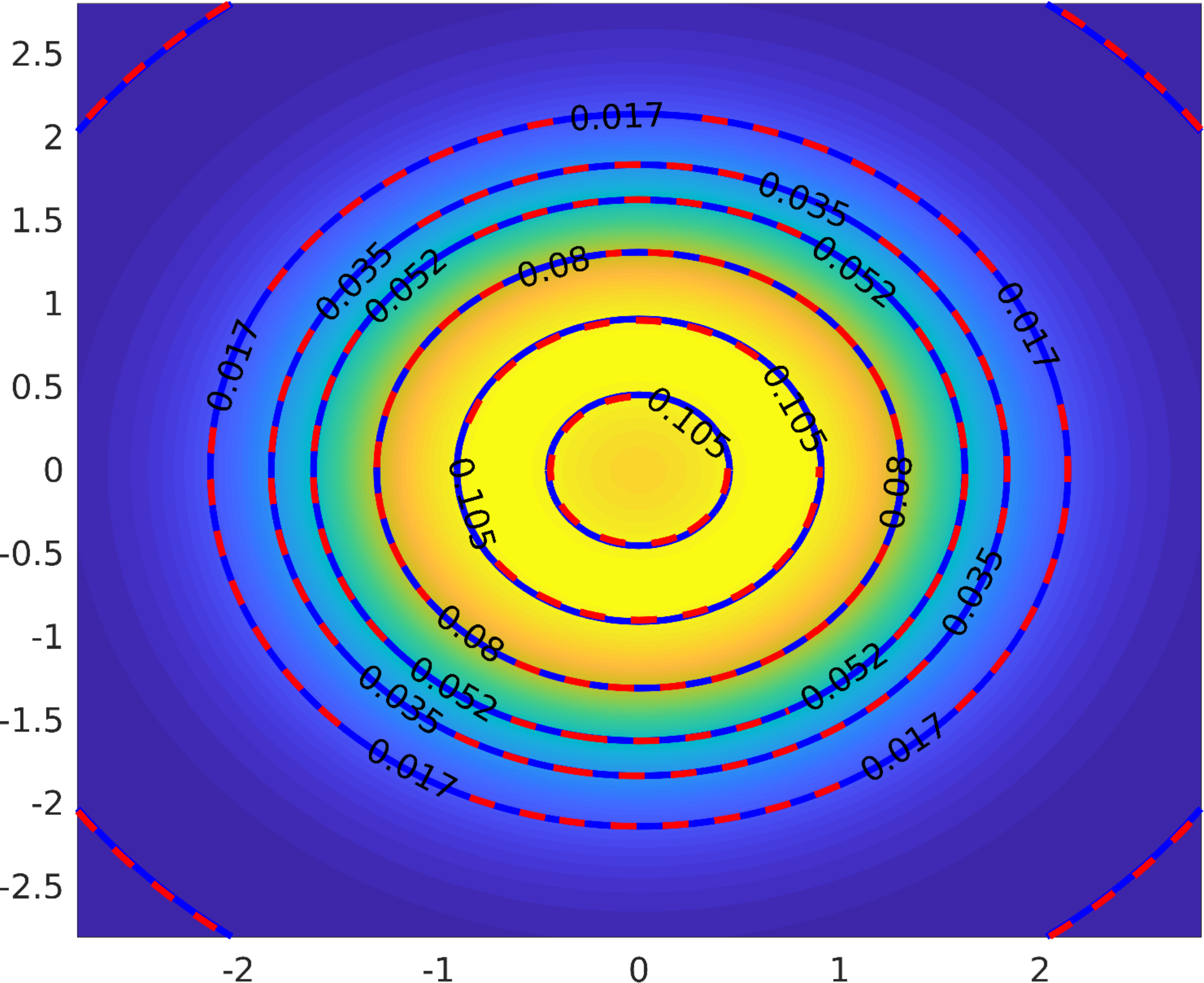}
} 
\subfloat[$t = 0.02, M_0 = 5$]{%
  \includegraphics[width=.33\textwidth]{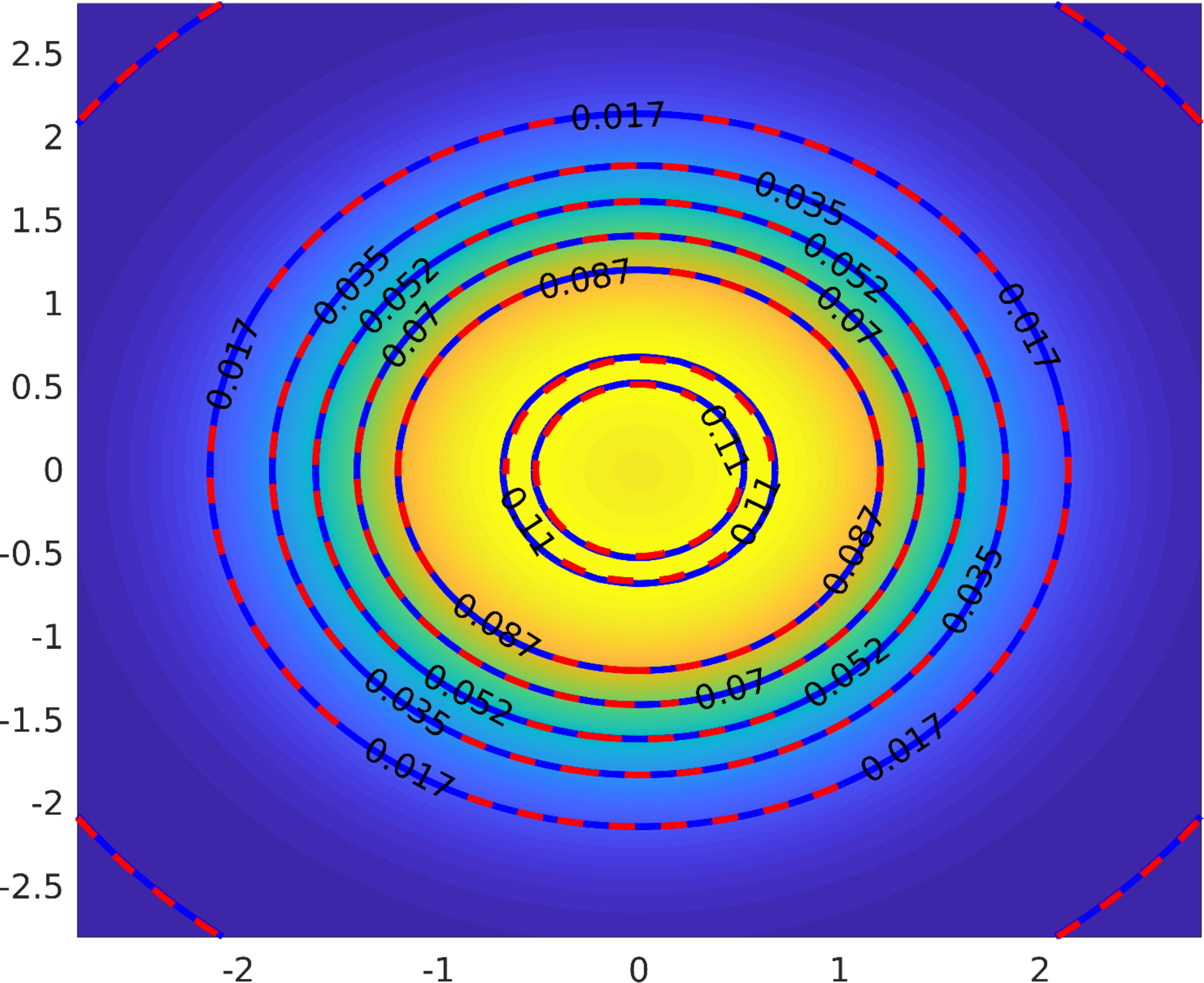}
} 
\subfloat[$t = 0.06, M_0 = 5$]{%
  \includegraphics[width=.33\textwidth]{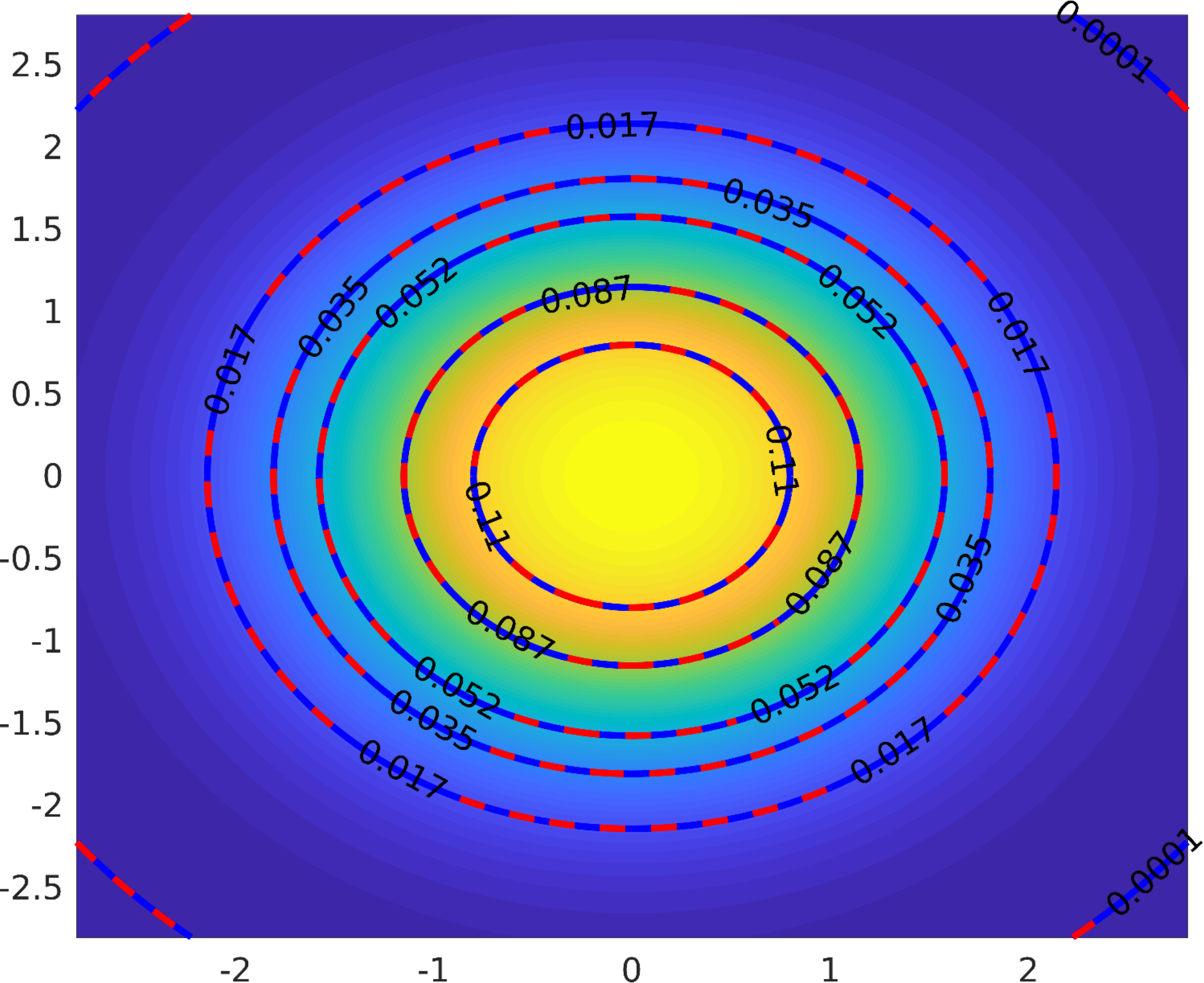}
} \\
\subfloat[$t = 0.01, M_0 = 15$]{%
  \includegraphics[width=.33\textwidth]{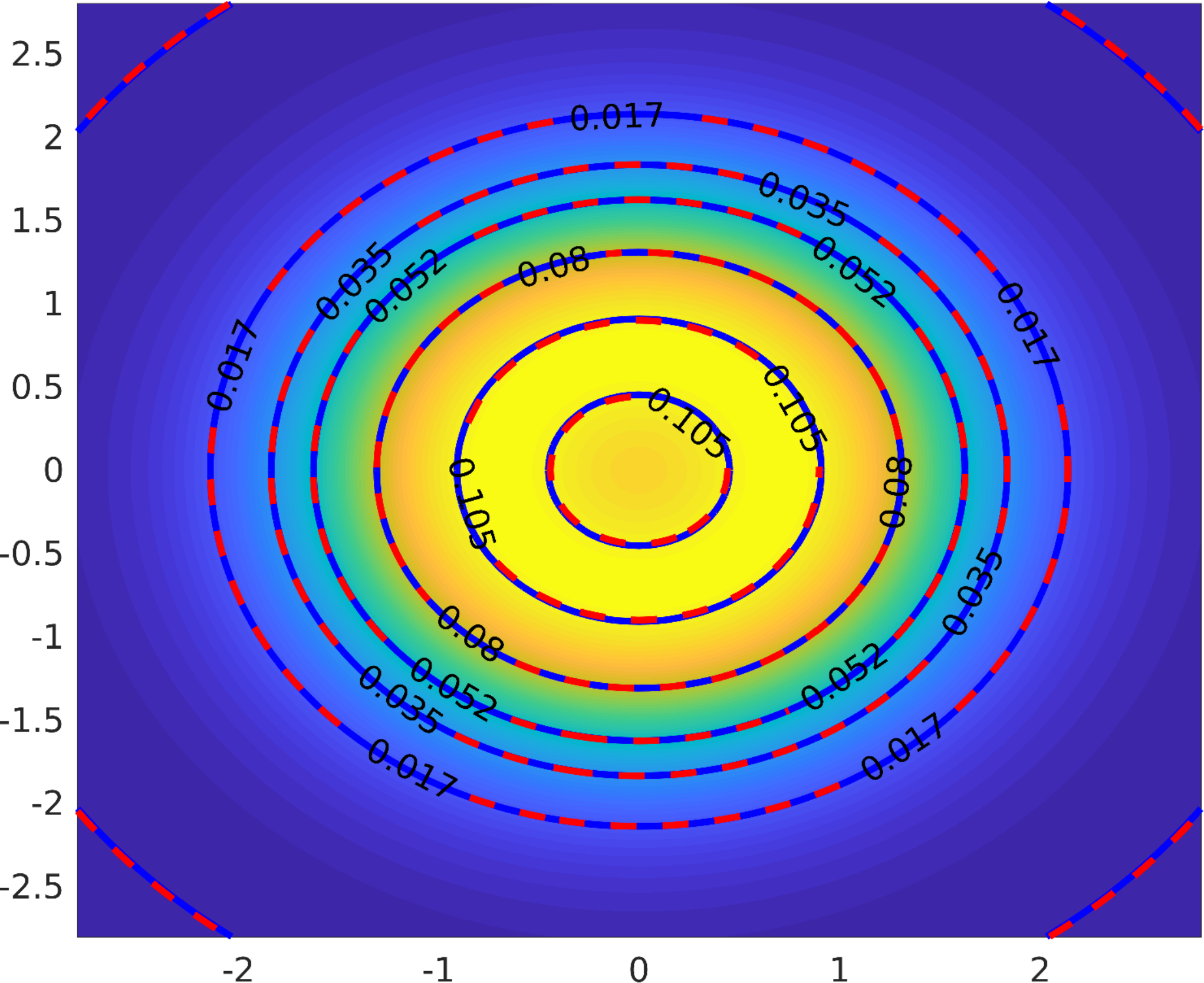}
} 
\subfloat[$t = 0.02, M_0 = 15$]{%
  \includegraphics[width=.33\textwidth]{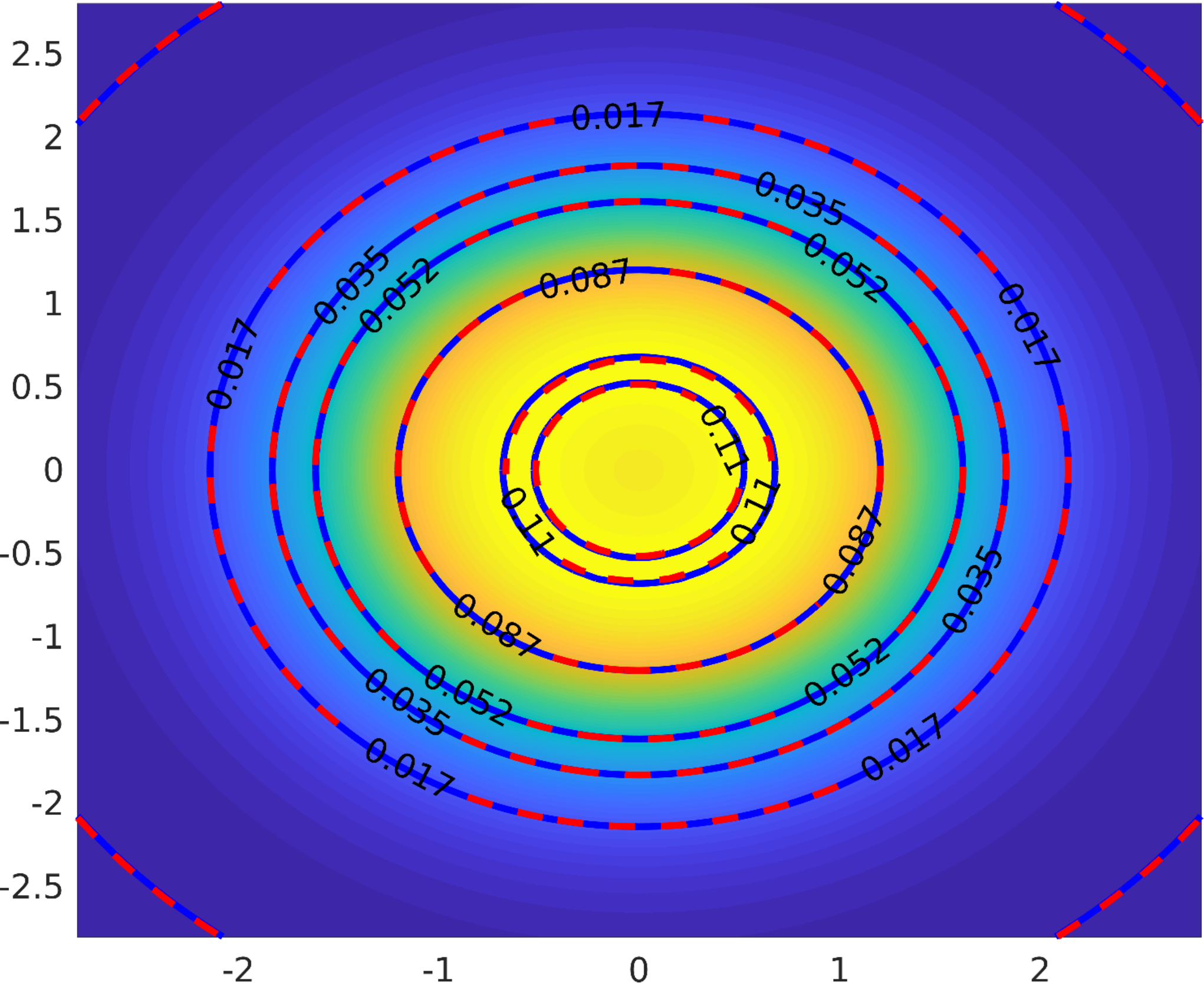}
} 
\subfloat[$t = 0.06, M_0 = 15$]{%
  \includegraphics[width=.33\textwidth,clip]{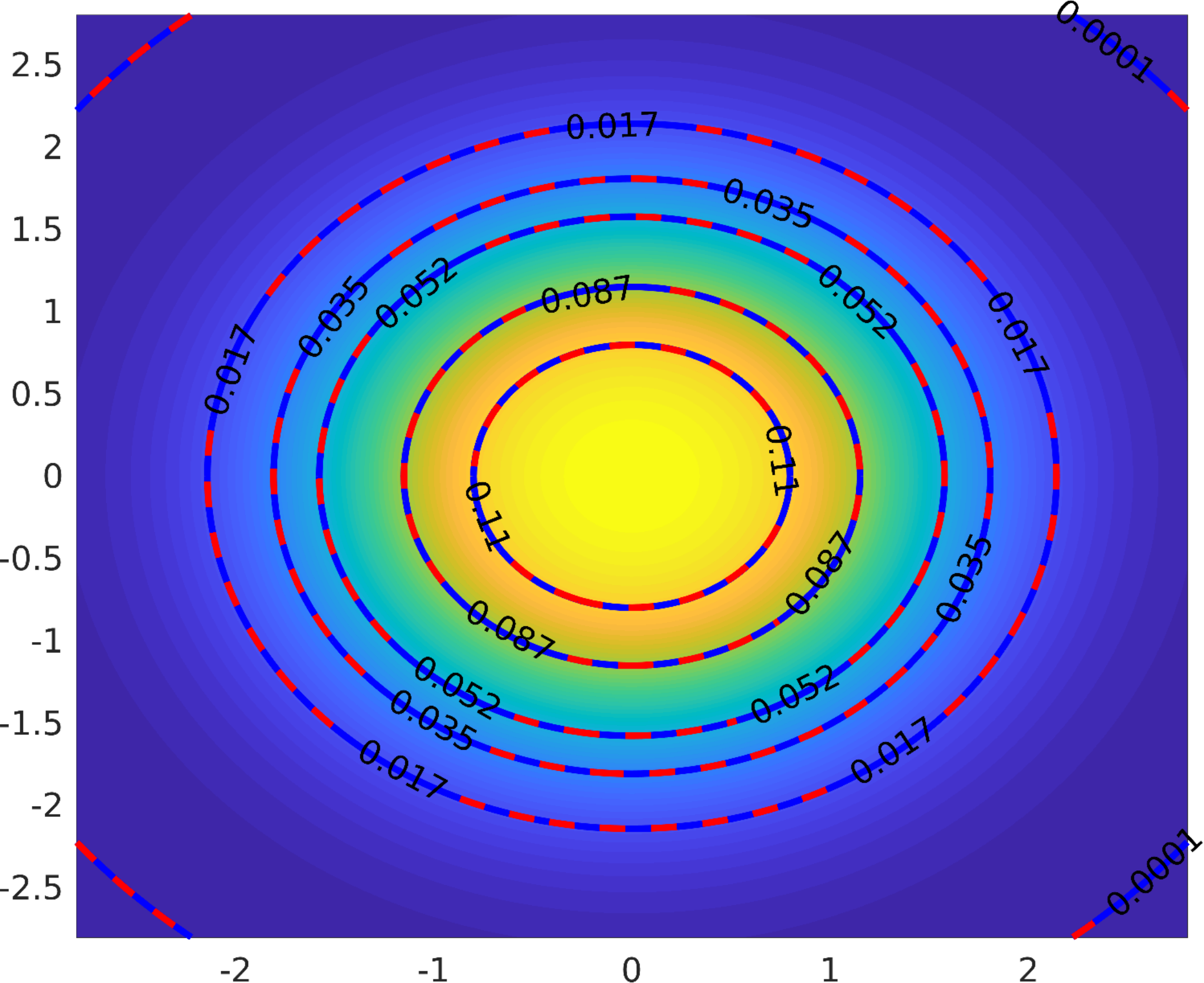}
} 
\caption{Marginal distribution functions $h(t, v_1, v_2)$ for
  $M_0 = 5$ and $15$ at $t = 0.01$, $0.02$ and $0.06$. The red dashed lines
  correspond to the exact solution, and the blue solid lines at different
  columns correspond to the numerical solutions $M_0 = 5$ and
  $M_0 = 15$. }
\label{fig:ex1_2d}
\end{figure}

Now we consider the time evolution of the expansion coefficients. By
expanding the exact solution into Hermite series, we get the exact
solution for the coefficients:
\begin{equation}
\label{eq:ex1_true}
f_{\alpha}(t) = \left\{\begin{array}{ll}
  \left[
    -0.2\exp \left( -4 t \right)
  \right]^{\frac{|\alpha|}{2}}
    \dfrac{1 - |\alpha|/2}{(\alpha_1/2)!(\alpha_2/2)!(\alpha_3/2)!}, &
  \text{if } \alpha_1, \alpha_2, \alpha_3 \text{ are even}, \\[13pt]
  0, & \text{ otherwise}.
\end{array} \right.
\end{equation}
From \eqref{eq:ex1_true}, we can find that the coefficients
$f_{\alpha}$ are zero for any $t$ if
$1 \leqslant |\alpha| \leqslant 3$. Hence we will focus on the
coefficients $f_{400}$ and $f_{220}$ here.
Figure \ref{fig:ex1_moments} gives the comparison between the
numerical solution and the exact solution for these two coefficients.
In both plots, all the three lines coincide perfectly.

\begin{figure}[!ht]
\centering
\subfloat[$f_{400}(t)$]{%
  \includegraphics[width=.4\textwidth]{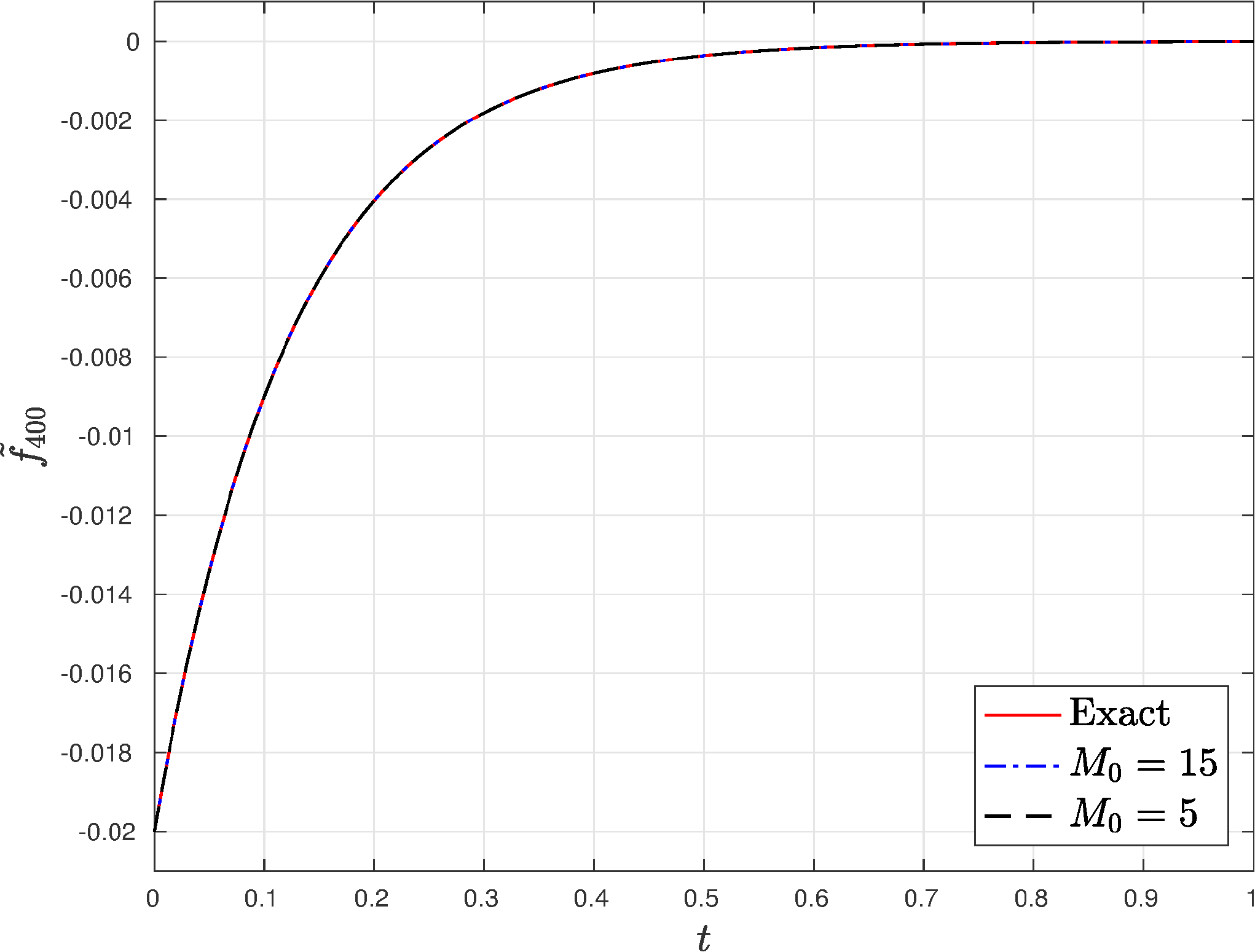}
} \hspace{20pt}
\subfloat[$f_{220}(t)$]{%
  \includegraphics[width=.4\textwidth]{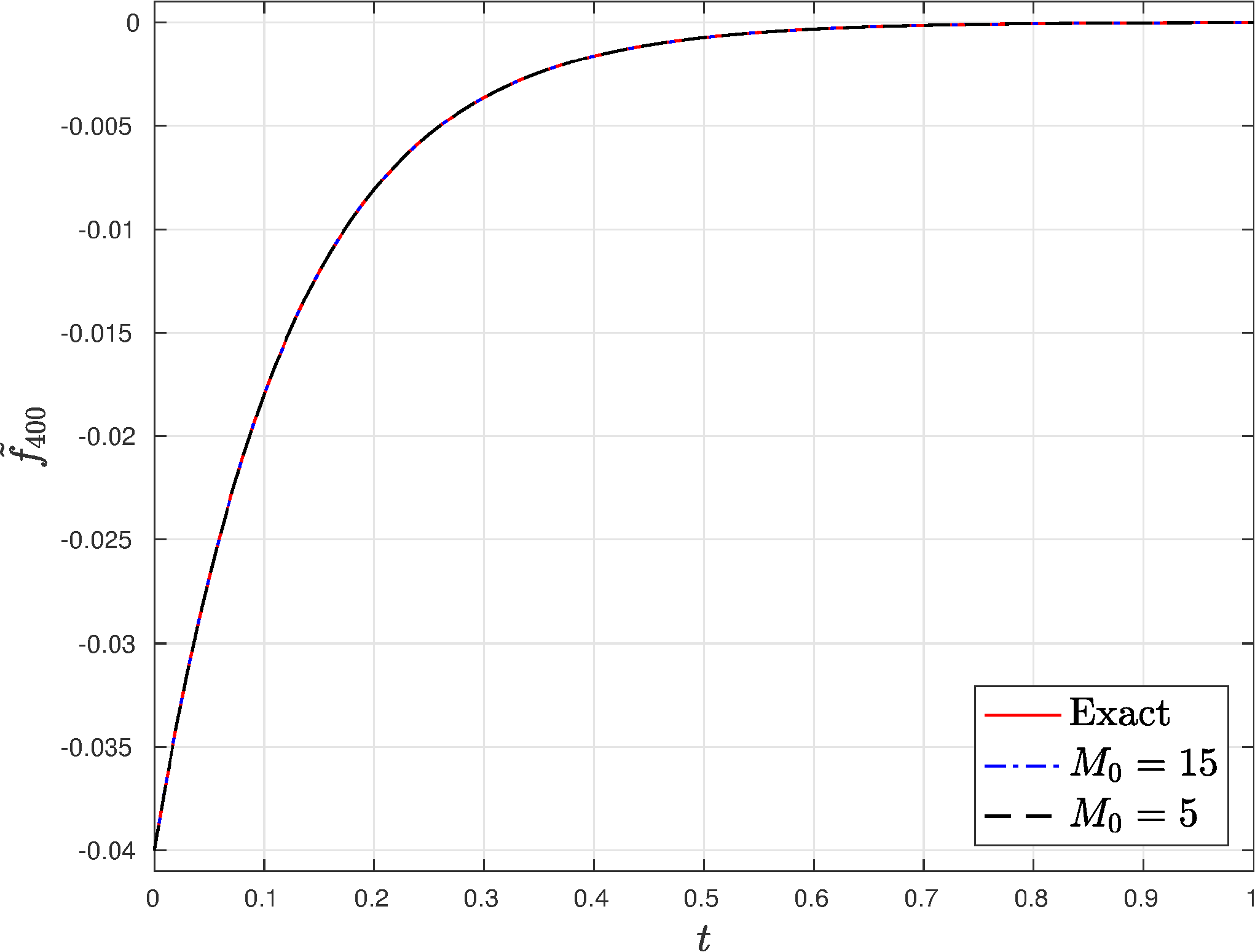}
}
\caption{The evolution of the coefficients. The red lines correspond
  to the reference solution, and the blue dot dashed and black dashed
  lines correspond to the numerical solutions of $M_0=15$ and $M_0=5$
  respectively.}
\label{fig:ex1_moments}
\end{figure}

\subsection{Bi-Gaussian initial data} 
\label{sec:ex2}
In this example, we perform the numerical test for the Bi-Gaussian
problem. Here the Coulombian case $\gamma = -3$ are tested. The initial
distribution function is
\begin{displaymath}
f(0,\bv) = \frac{1}{2\pi^{3/2}} \left[
  \exp \Big( -(v_1 + \sqrt{3/2})^2 + v_2^2 + v_3^2 \Big) +
  \exp \Big( -(v_1 - \sqrt{3/2})^2 + v_2^2 + v_3^2 \Big)
\right].
\end{displaymath}
In this numerical test, we use $M = 20$ which gives a good
approximation of the initial distribution function (see Figure
\ref{fig:ex2_init}).
\begin{figure}[!ht]
\centering
\subfloat[Initial MDF $g(0,v_1)$\label{fig:ex2_init_1d}]{%
  \includegraphics[height=.24\textwidth]{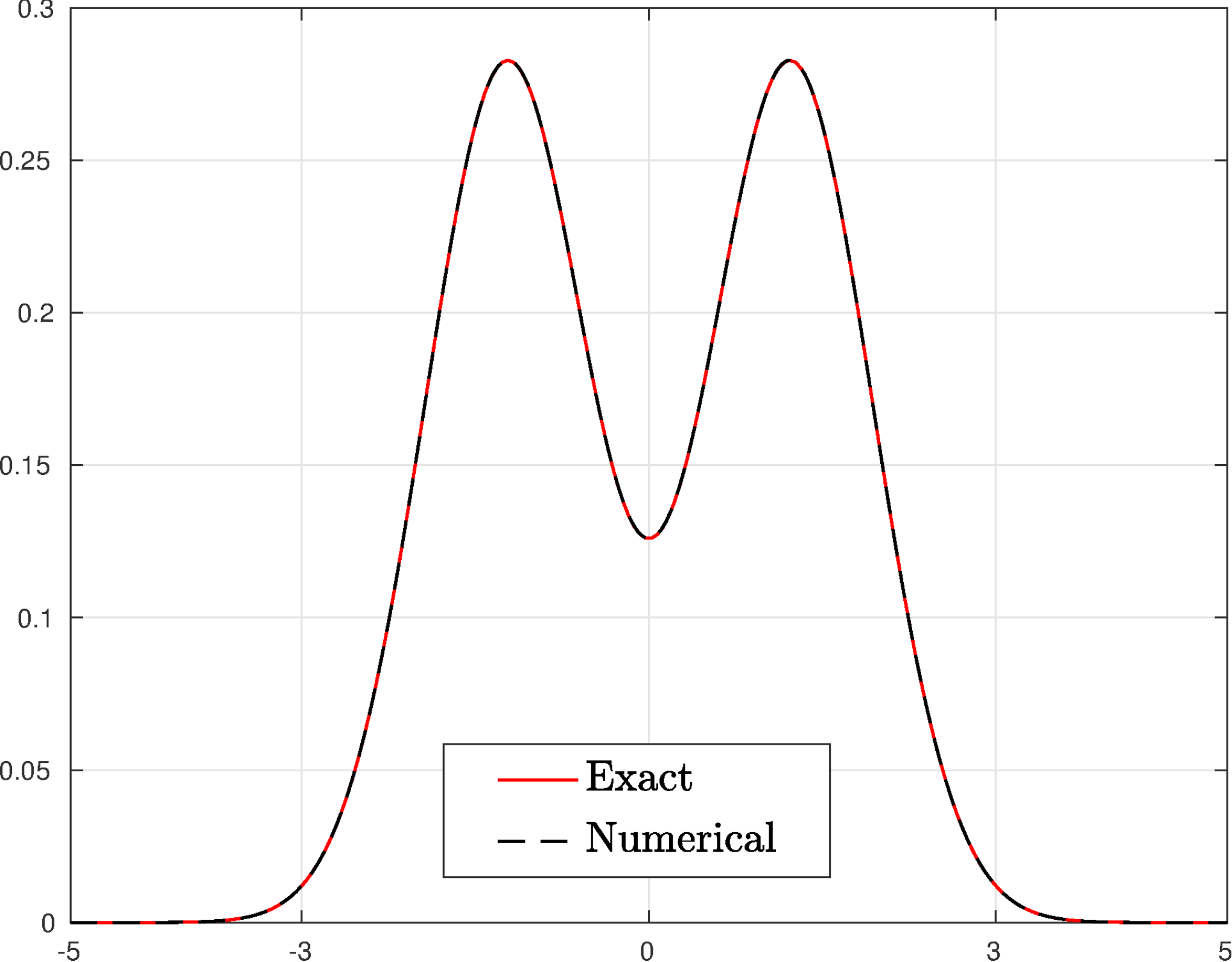}
}\hfill
\subfloat[Contours of $h(0,v_1,v_2)$\label{fig:ex2_init_2d_contour}]{%
  \includegraphics[height=.24\textwidth]{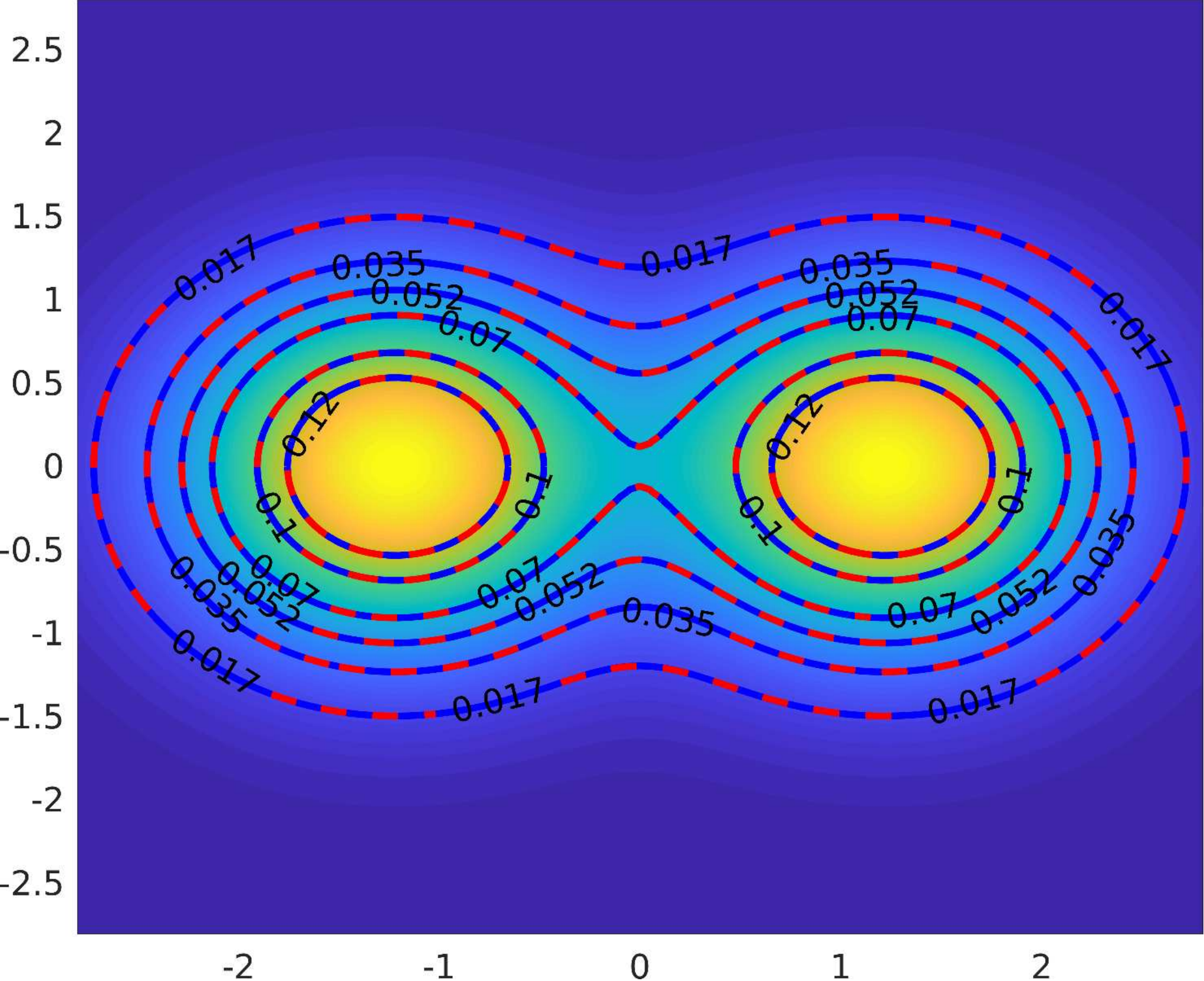}
}\hfill
\subfloat[Initial MDF $h(0,v_1,v_2)$\label{fig:ex2_init_2d}]{%
  \includegraphics[height=.25\textwidth]{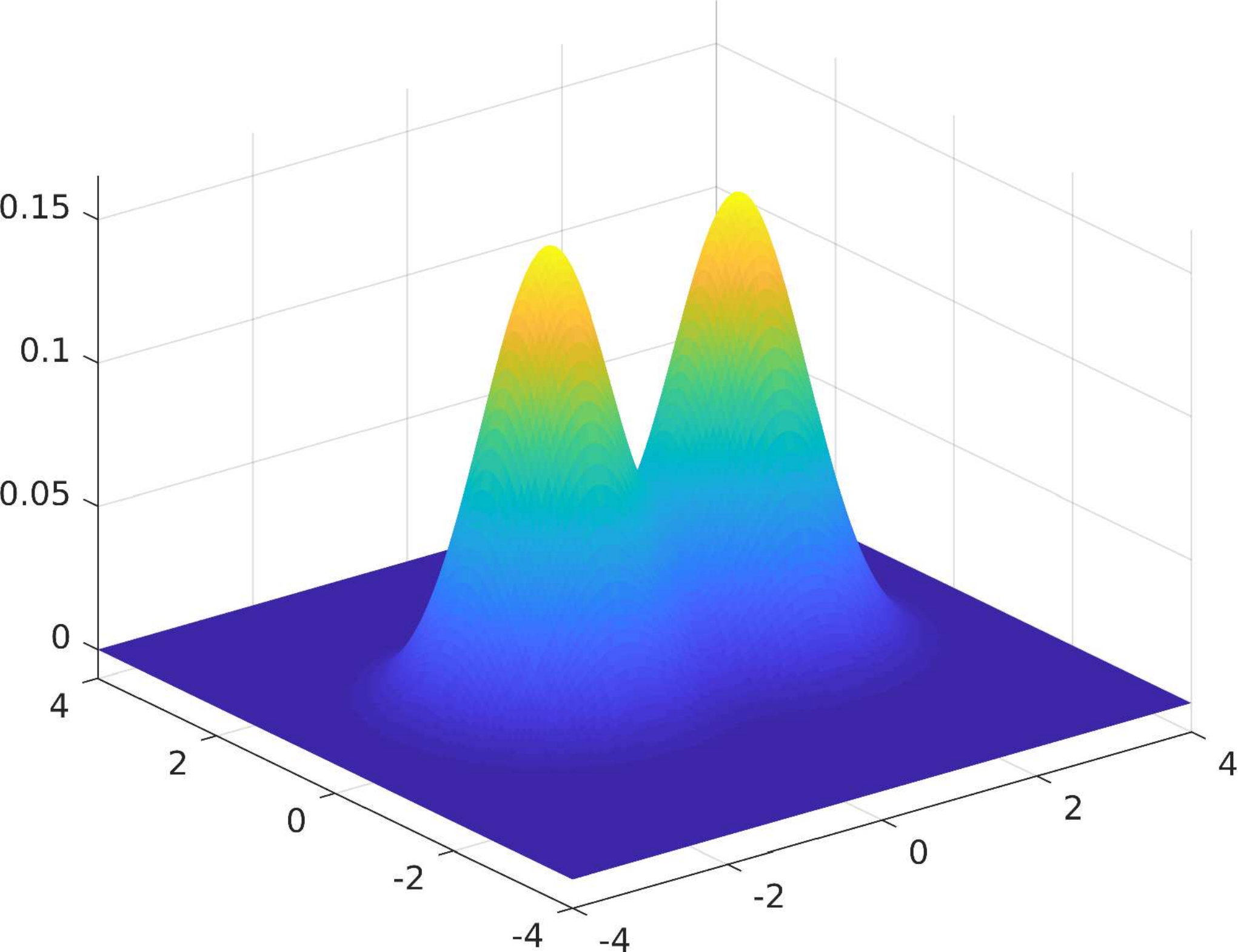}
}
\caption{Figure (a) is the initial marginal distribution functions
  $g(0, v_1)$.  The red solid line corresponds to the exact solution,
  and the blue dashed line corresponds to the numerical approximation.
  Figure (b) is the initial marginal distribution functions
  $h(0, v_1, v_2)$. The blue solid lines correspond to the exact
  solution, and the red dashed lines correspond to the numerical
  approximation. Figure (c) shows only the numerical approximation.}
\label{fig:ex2_init}
\end{figure}

For this example, we consider the three cases $M_0 = 5, 10, 15$, and
the corresponding one-dimensional marginal distribution functions at
$t = 0.4$, $1$ and $3$ are given in Figure \ref{fig:ex2_1d}. In all
the results, the numerical results are converging to the reference
solution as $M_0 = 15$, and the lines for $M_0 = 10$ and $M_0 = 15$
are very close to each other. To get a clearer picture, similar
comparisons of two-dimensional results are also provided in Figure
\ref{fig:ex2_2d}.

\begin{figure}[!ht]
\centering
\subfloat[$t=0.4$]{%
  \includegraphics[width=.3\textwidth]{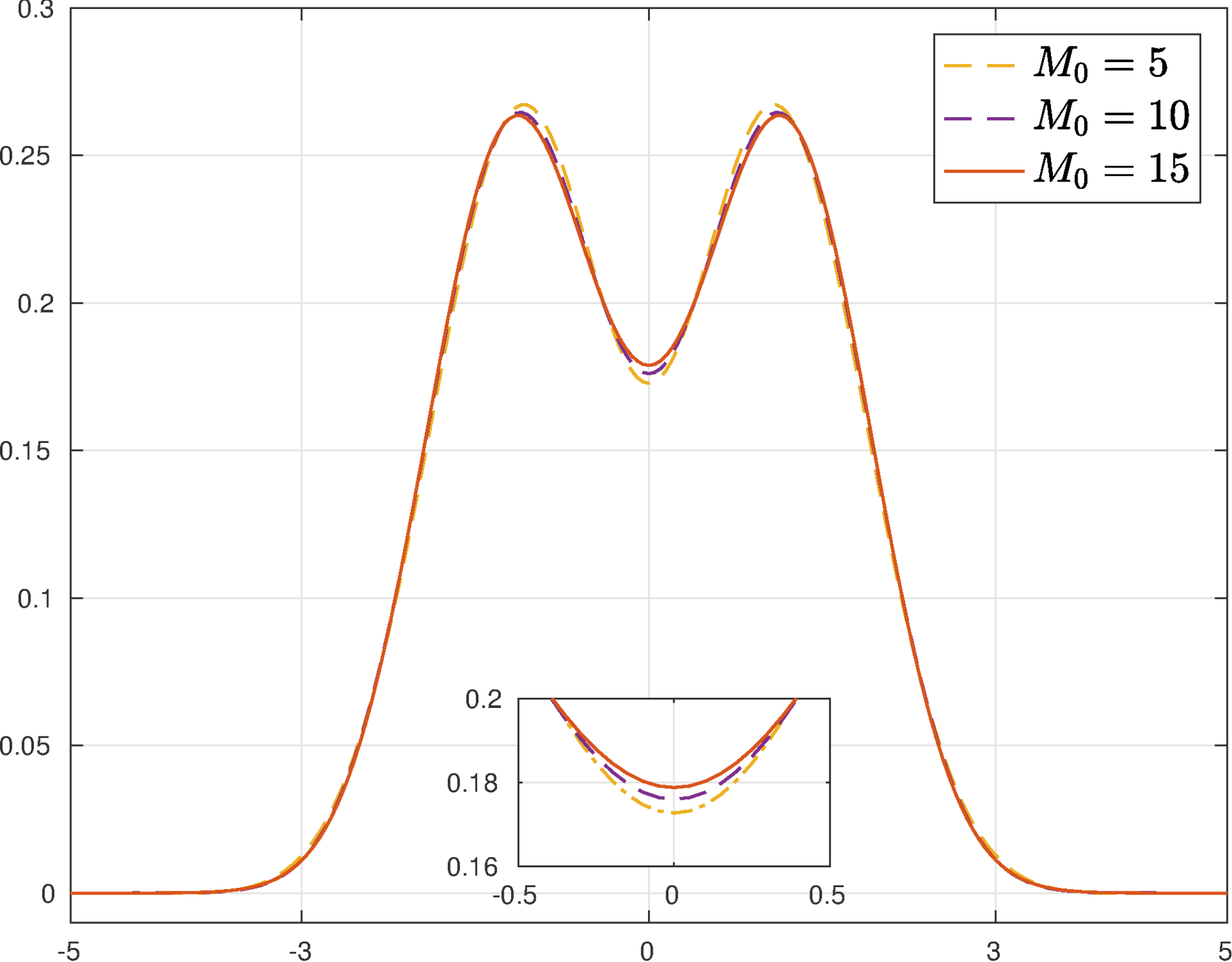}
} \hfill
\subfloat[$t=1$]{%
  \includegraphics[width=.3\textwidth]{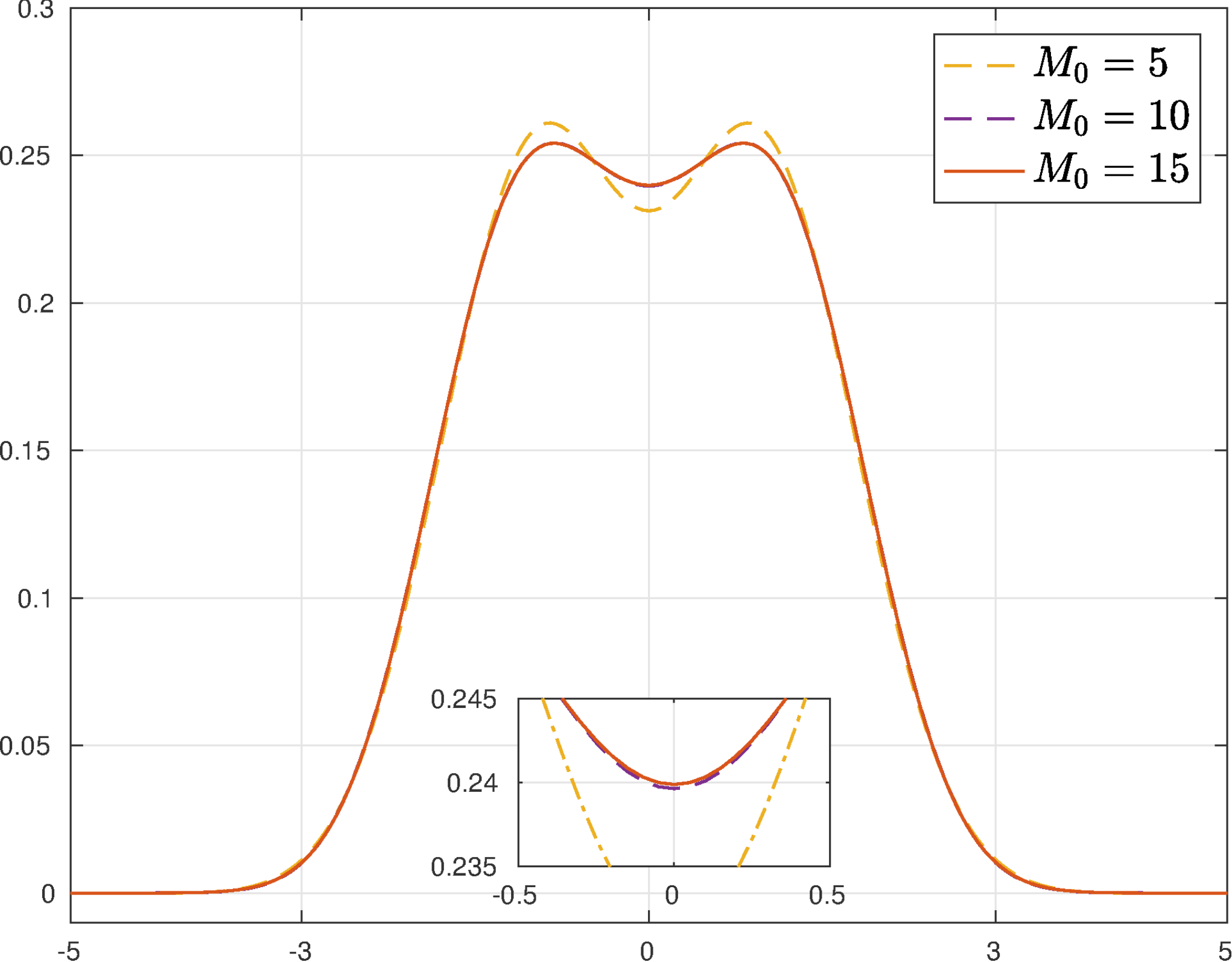}
} \hfill
\subfloat[$t=3$]{%
  \includegraphics[width=.3\textwidth]{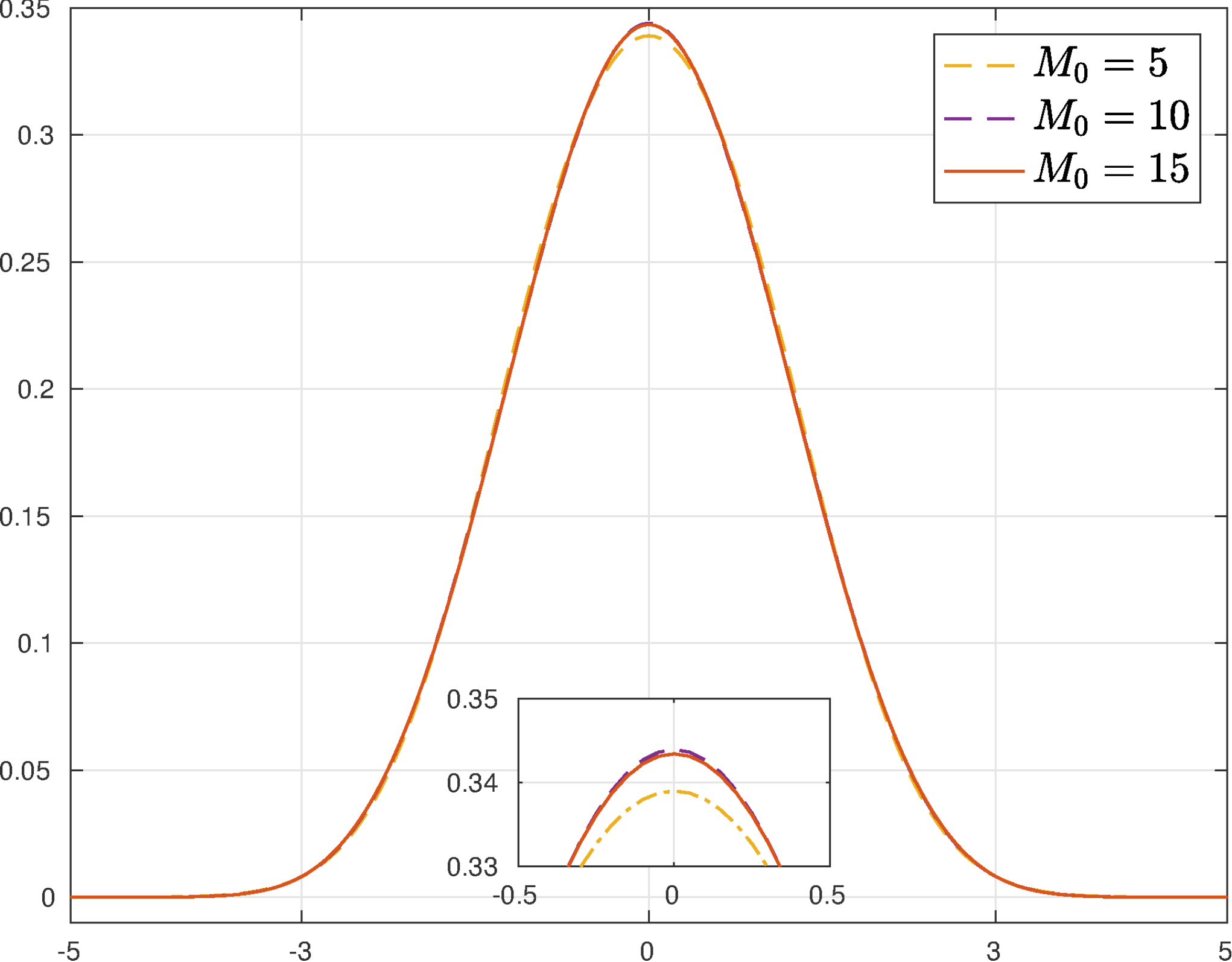}
}
\caption{The Coulombian case $\gamma = -3$.  Marginal distribution
  functions at different times.}
\label{fig:ex2_1d}
\end{figure}

\begin{figure}[!ht]
\centering
\subfloat[$t=0.4, M_0 = 5$]{%
  \includegraphics[width=.33\textwidth]{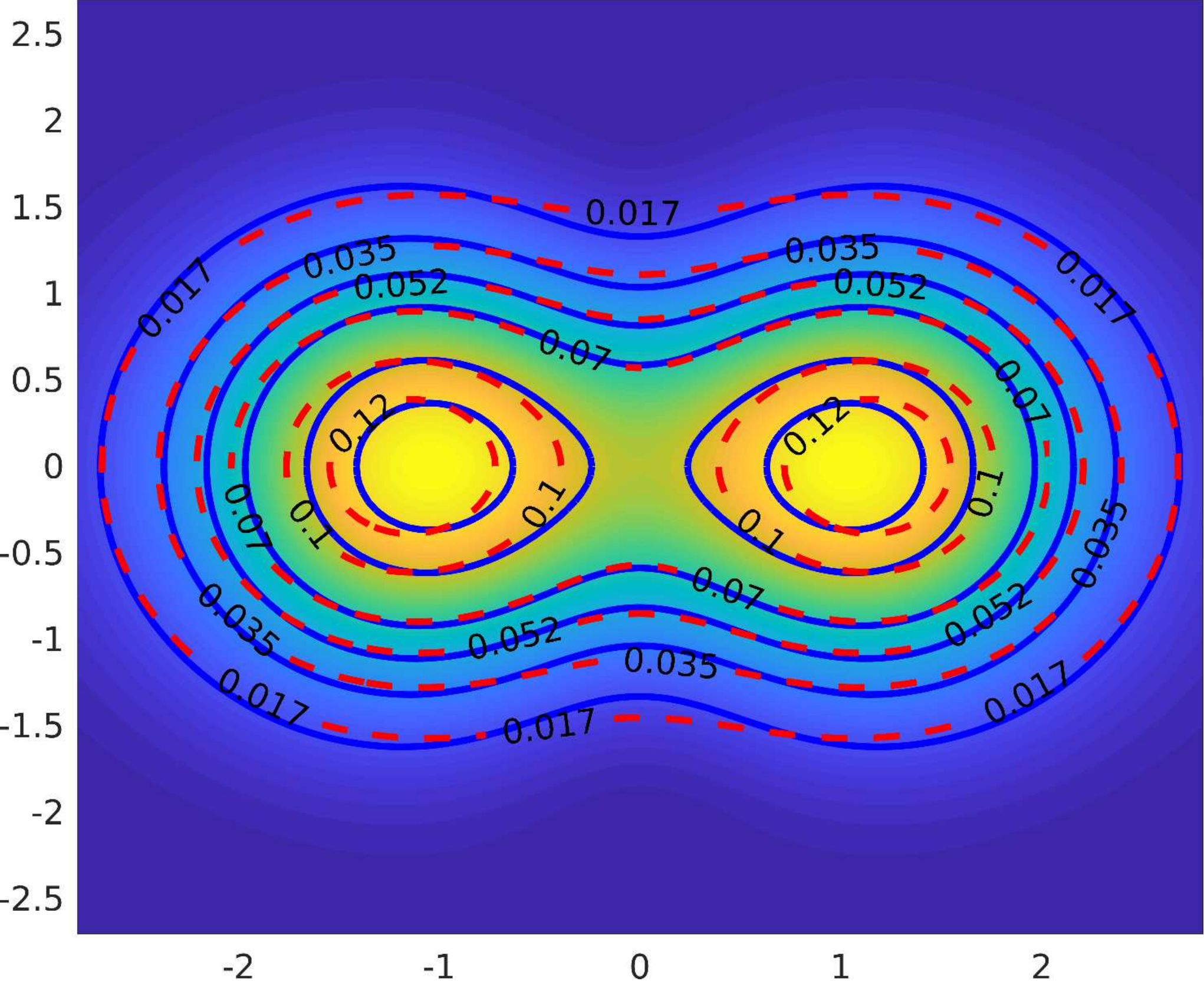}
} 
\subfloat[$t=1, M_0 = 5$]{%
  \includegraphics[width=.33\textwidth]{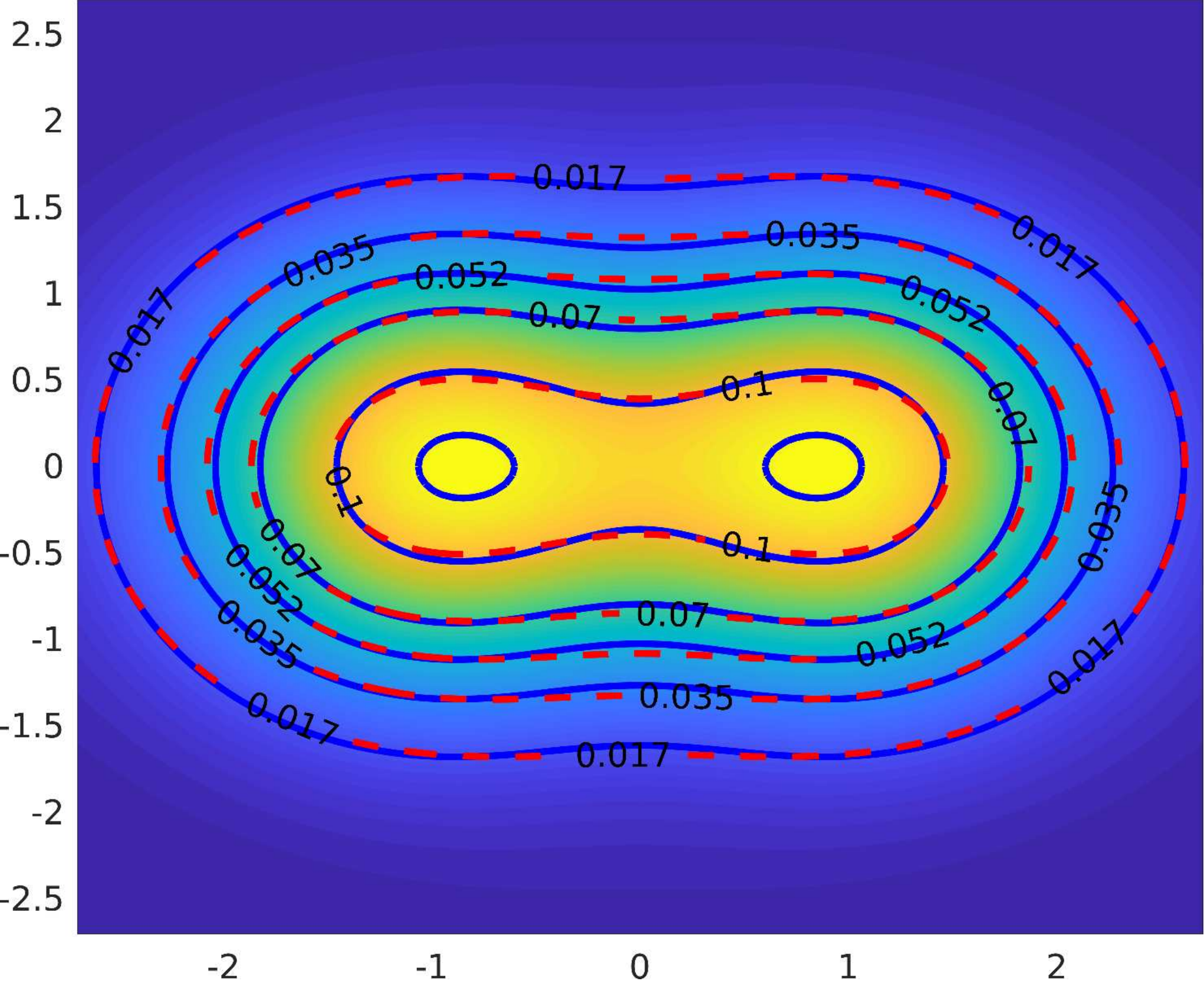}
} 
\subfloat[$t=3, M_0 = 5$]{%
  \includegraphics[width=.33\textwidth]{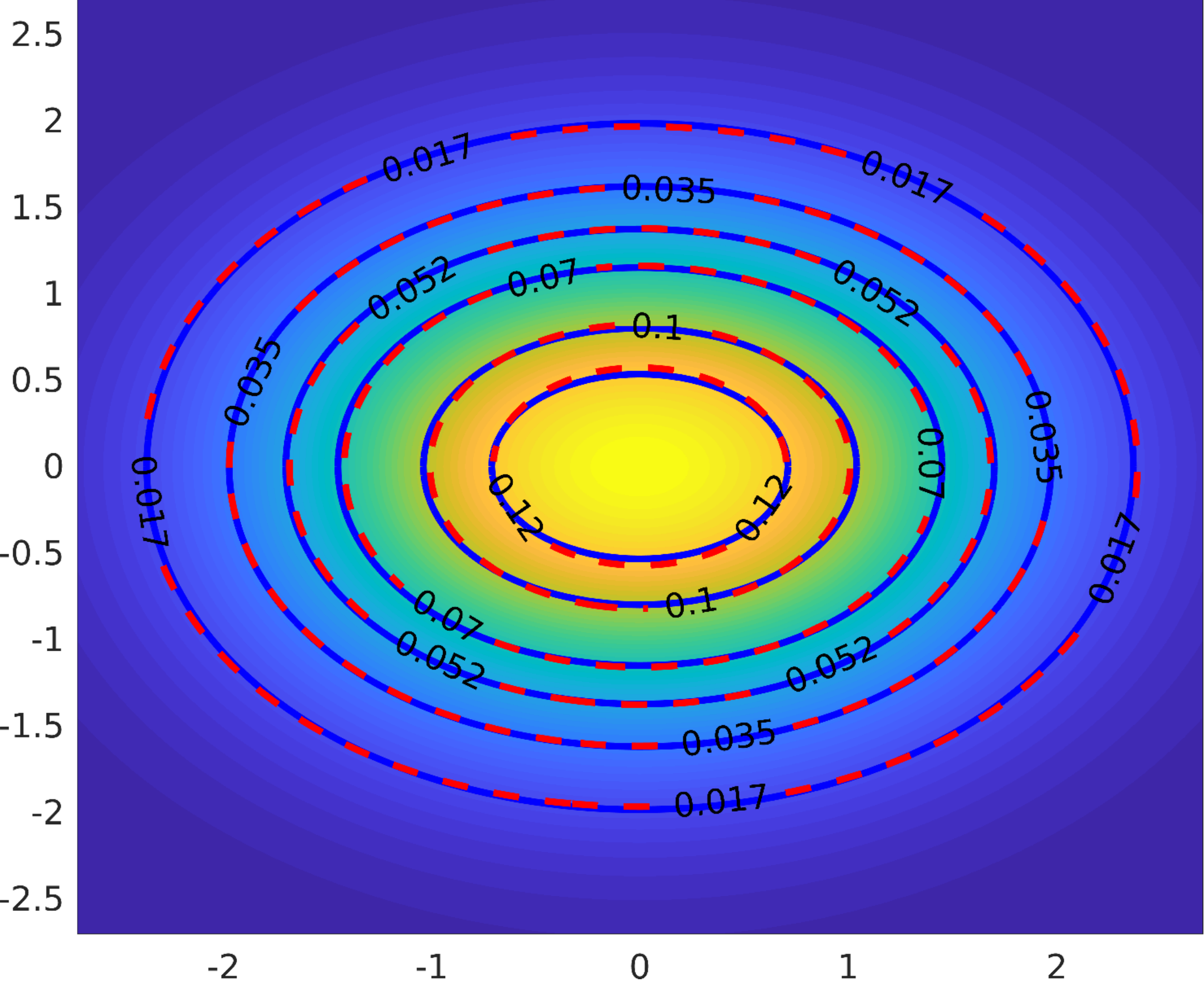}
}\\
\subfloat[$t=0.4, M_0 = 10$]{%
  \includegraphics[width=.33\textwidth]{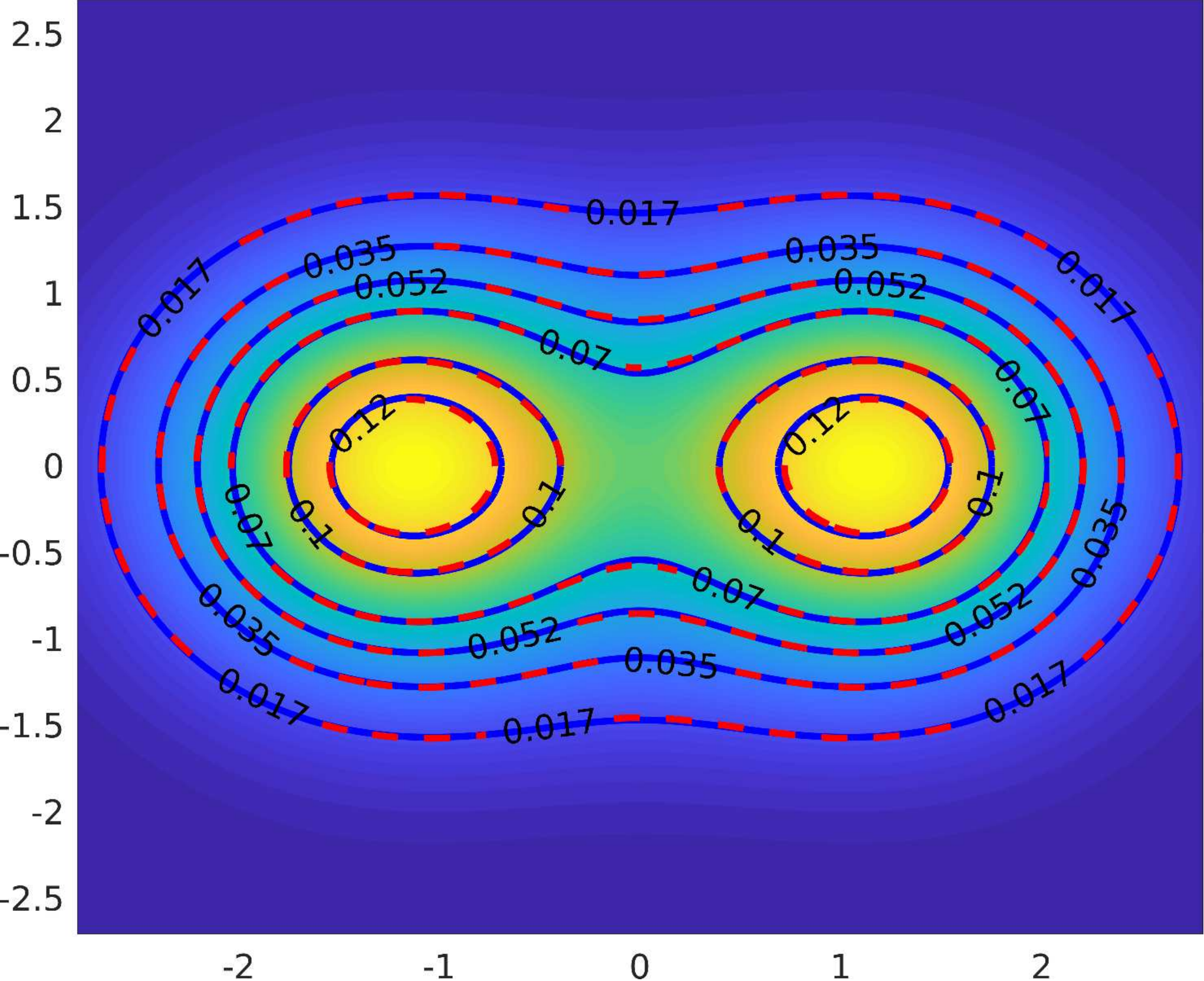}
} 
\subfloat[$t=1, M_0 =10$]{%
  \includegraphics[width=.33\textwidth]{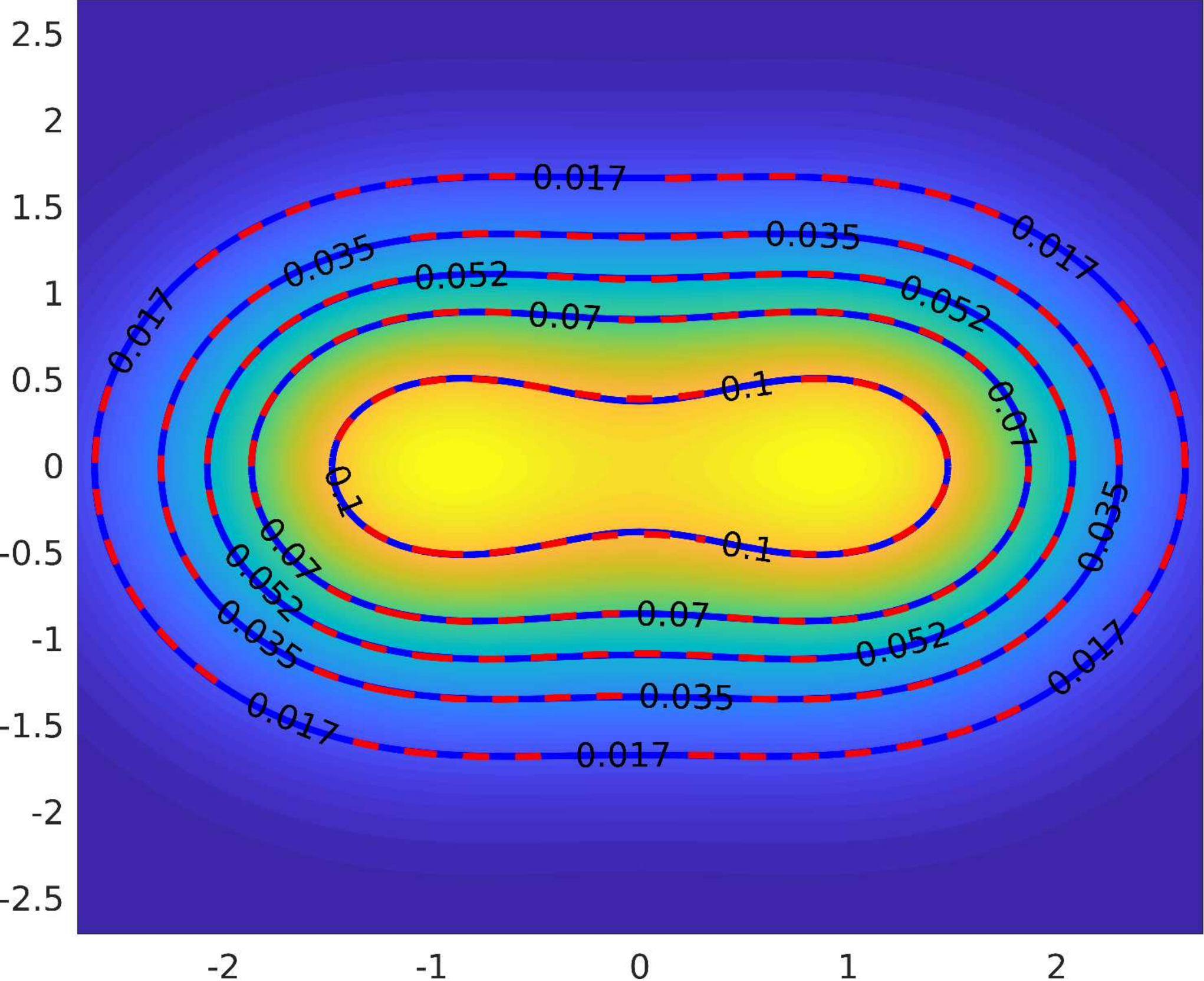}
} 
\subfloat[$t=3, M_0 = 10$]{%
  \includegraphics[width=.33\textwidth]{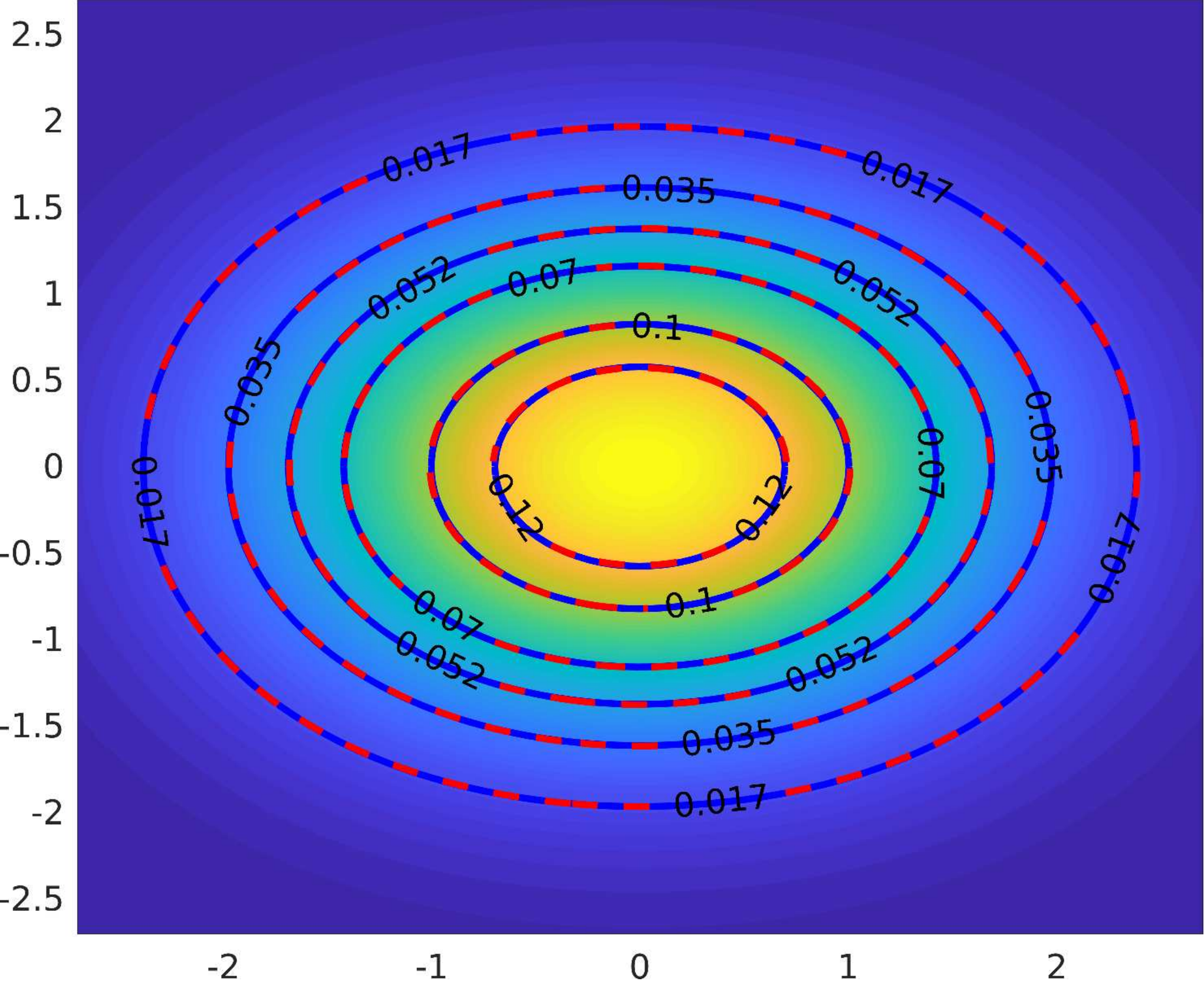}
}
\caption{The Coulombian case $\gamma = -3$.  Comparison of numerical
  results with the reference solution. The red dashed contours are the
  reference solution as $M_0 = 15$. The blue solid contours at
  different columns are respectively the results for $M_0 = 5$ and
  $M_0 = 10$.}
\label{fig:ex2_2d}
\end{figure}

Now we consider the evolution of stress tensor and heat flux. In this
example, we always have $\sigma_{11} = -2\sigma_{22} = -2\sigma_{33}$
and $q_i = 0, i =1, 2,3$.  Therefore we focus only on the evolution of
$\sigma_{11}$, which is plotted in Figure \ref{fig:ex2_sigma11}.  It
can be seen that even for $M_0 = 5$, the evolution of the stress
tensor is almost exact, where the distribution function is not
approximated very well. The three lines are all on top of each other.

\begin{figure}[!ht]
\centering
\includegraphics[width=.4\textwidth]{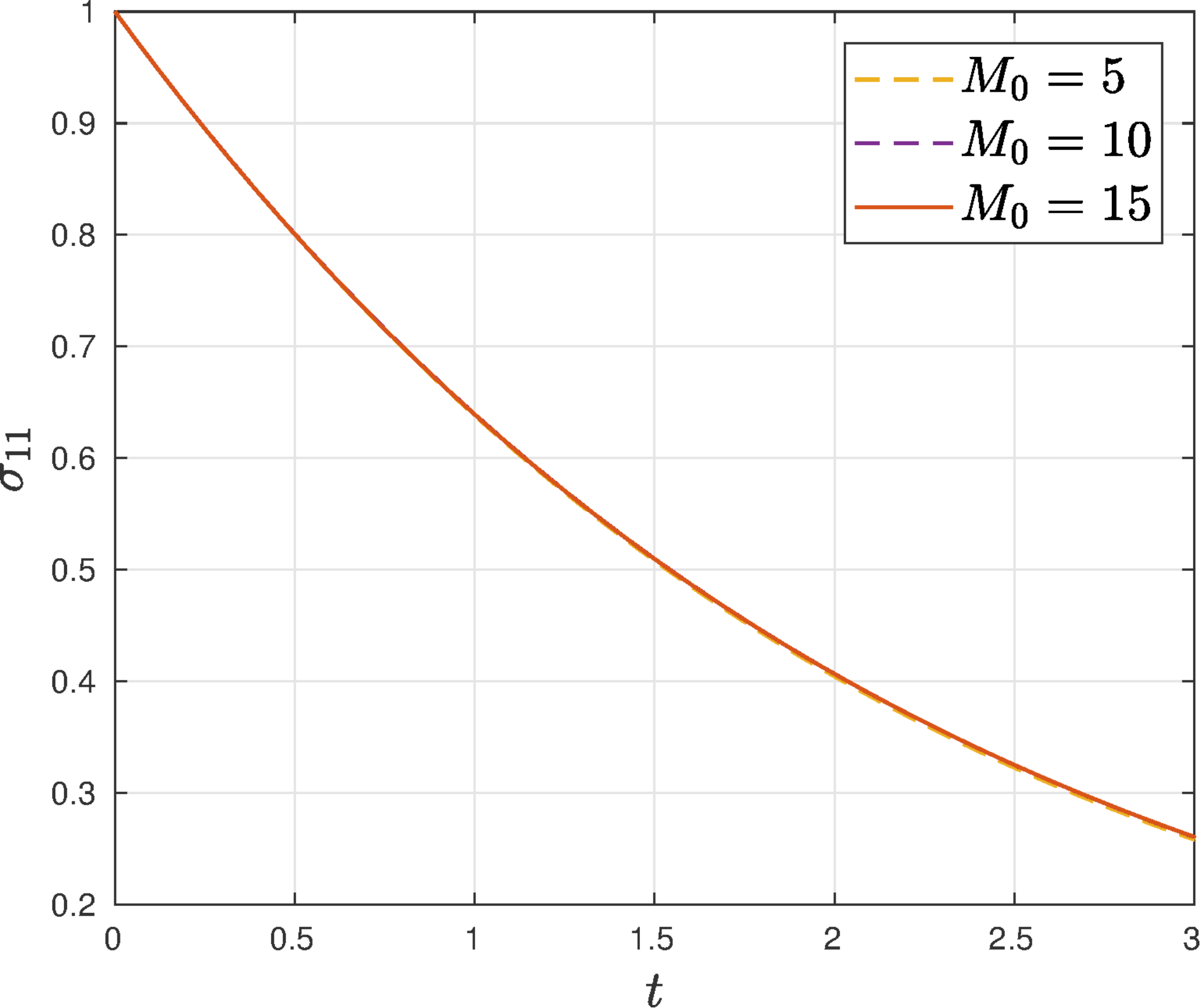}
\caption{The Coulombian case $\gamma = -3$. Evolution of
  $\sigma_{11}(t)$. Three lines are on top of each other.}
\label{fig:ex2_sigma11}
\end{figure}

In order to test the computational capacity of our new model, the same
example with a very small $\gamma$ as $\gamma = -4.9$ is tested. Here
we also set $M=20$, and choose the numerical results with $M_0=15$ as
the reference solution. Figure \ref{fig:ex2_4p9_2d} shows the marginal
distribution $h(t, v_1, v_2)$ at $t = 0.4, 1$ and $3$ with $M_0= 5$
and $10$. It illustrates that when $\gamma$ equals $-4.9$, the
numerical solutions are converging to the reference solution as
$M_0=15$, and that the solution with $M_0=10$ is almost the same as
the reference solution. The time evolution of $\sigma_{11}$ is plotted
in Figure \ref{fig:ex2_4p9_sigma11}, where the three results are also
on top of each other, even with $M_0=5$. Moreover, from Figure
\ref{fig:ex2_4p9_2d} and \ref{fig:ex2_4p9_sigma11}, we can find that
the time evolution of the distribution function with $\gamma = -4.9$
is slower than that with $\gamma=-3$, which is also consistent with
the form of the FPL collision operator.

\begin{figure}[!ht]
\centering
\subfloat[$t=0.4, M_0 = 5$]{%
  \includegraphics[width=.33\textwidth]{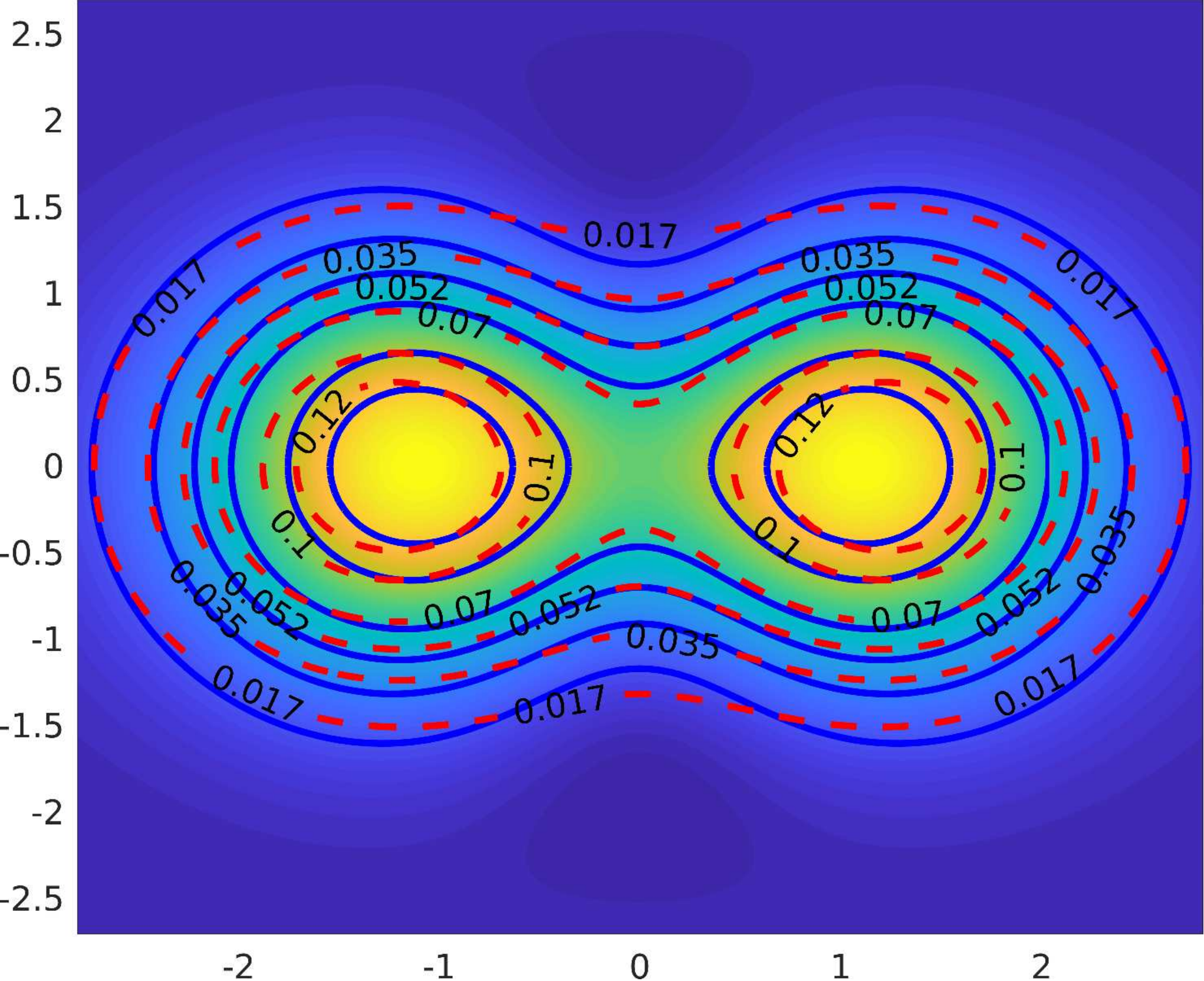}
} 
\subfloat[$t=1, M_0 = 5$]{%
  \includegraphics[width=.33\textwidth]{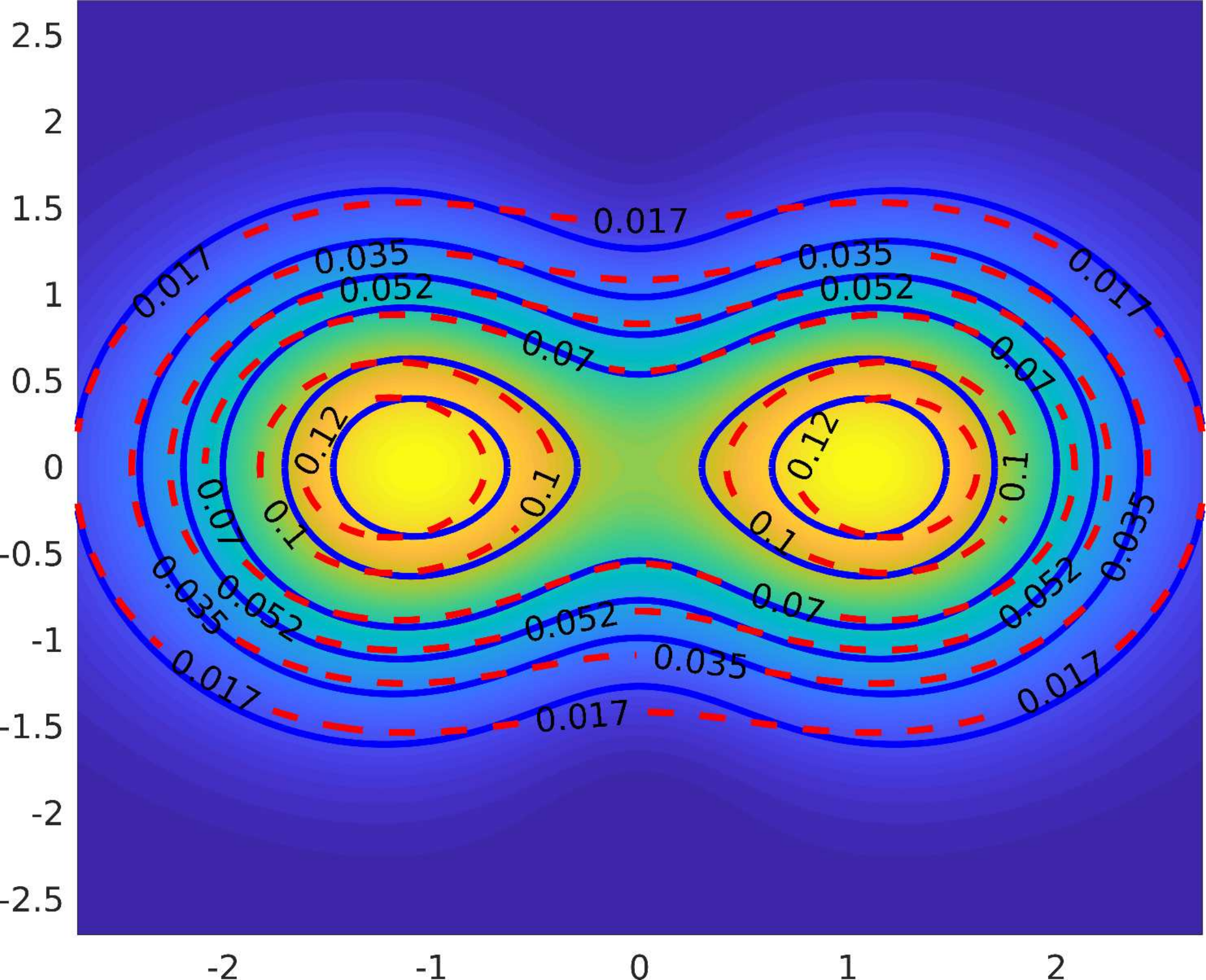}
} 
\subfloat[$t=3, M_0 = 5$]{%
  \includegraphics[width=.33\textwidth]{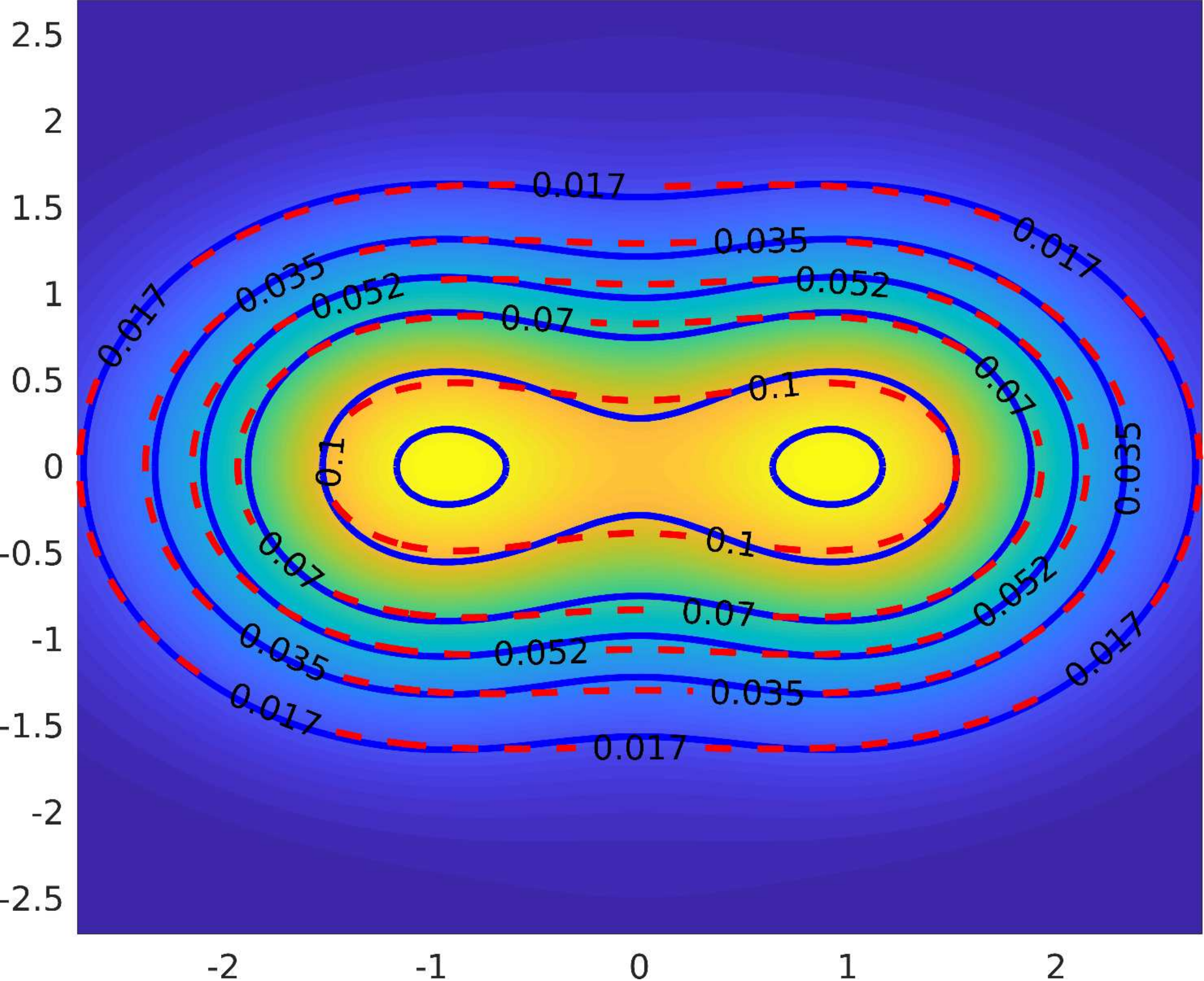}
} \\
\subfloat[$t=0.4, M_0=10$]{%
  \includegraphics[width=.33\textwidth]{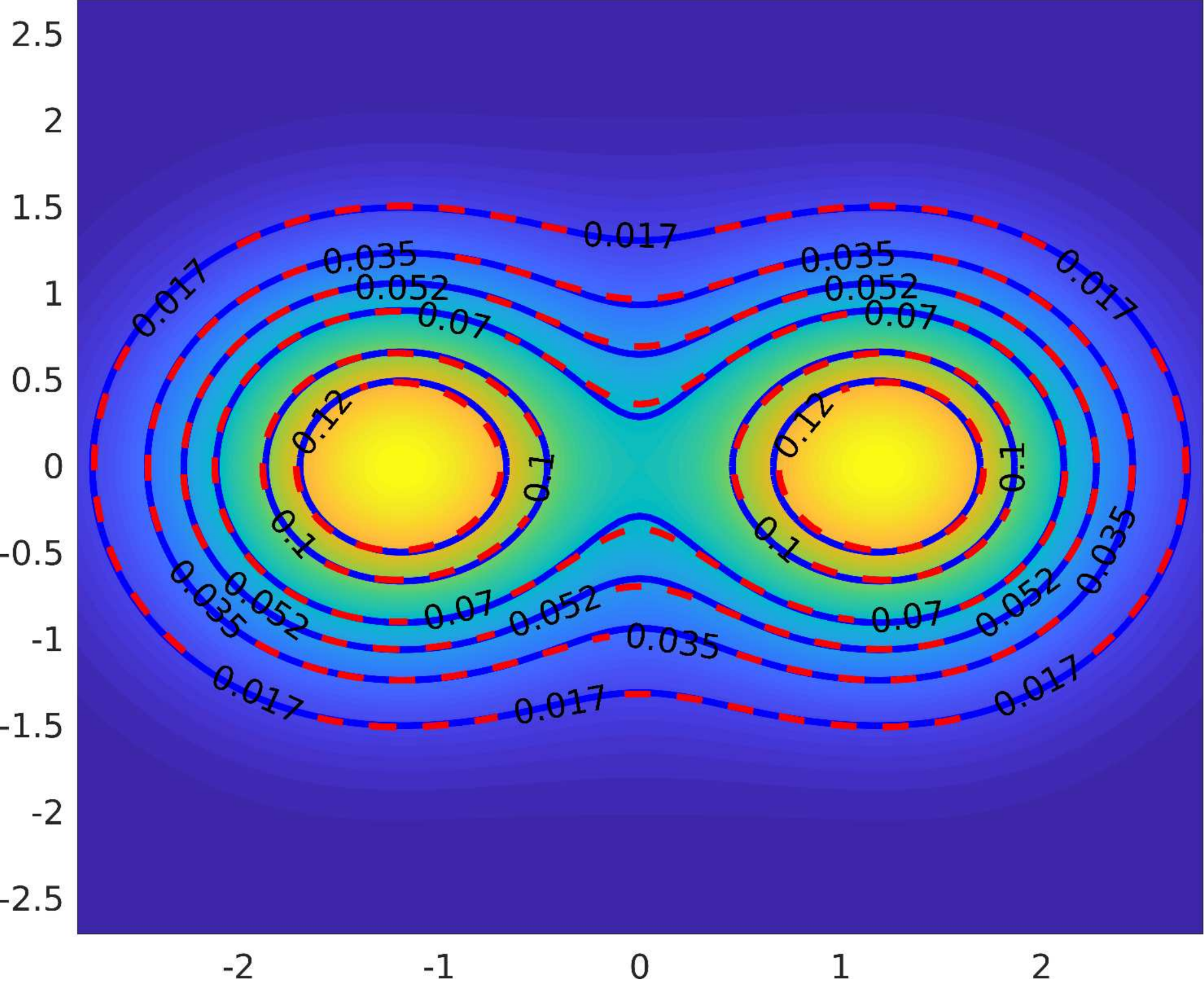}
} 
\subfloat[$t=1, M_0=10$]{%
  \includegraphics[width=.33\textwidth]{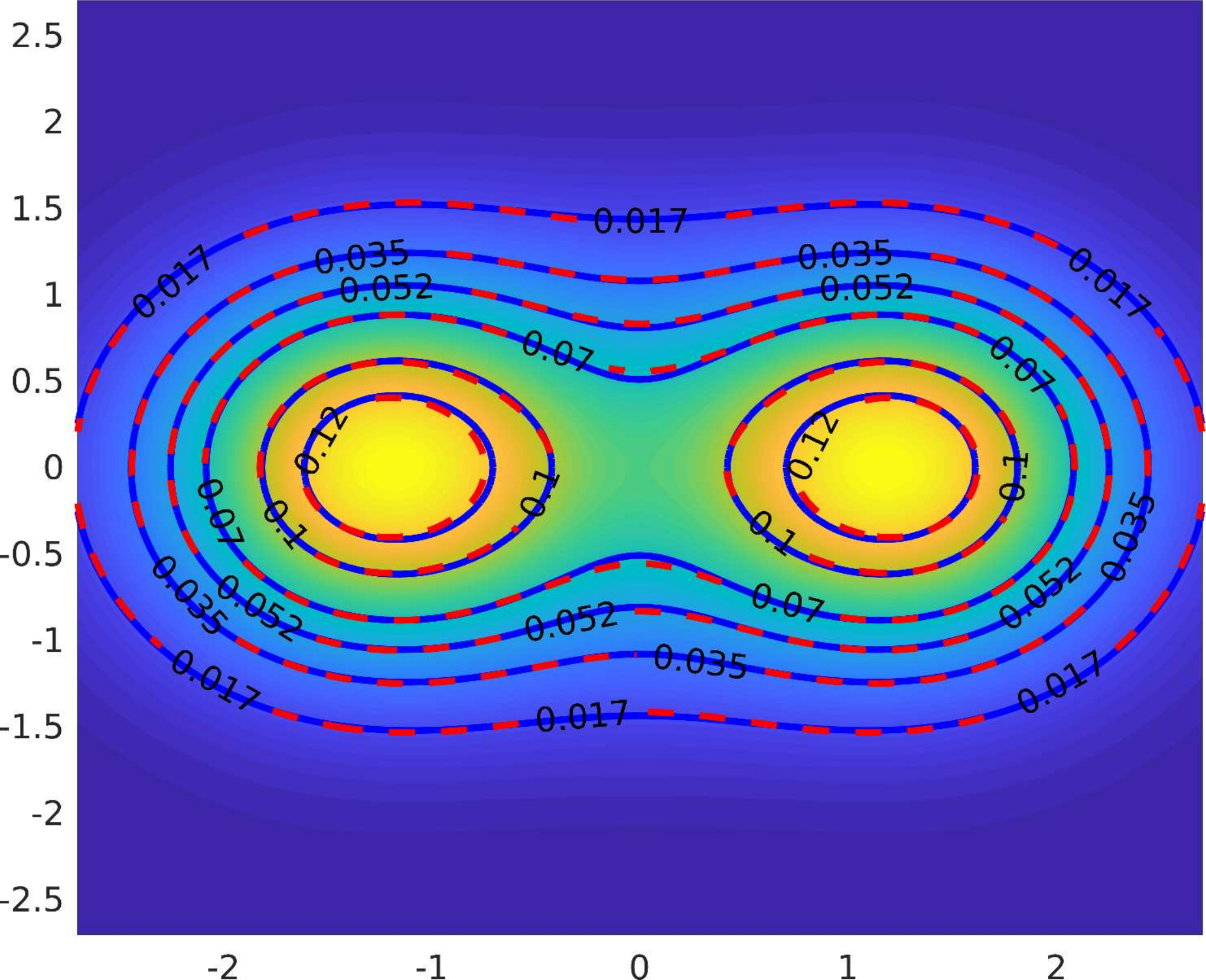}
} 
\subfloat[$t=3, M_0 =10$]{%
  \includegraphics[width=.33\textwidth]{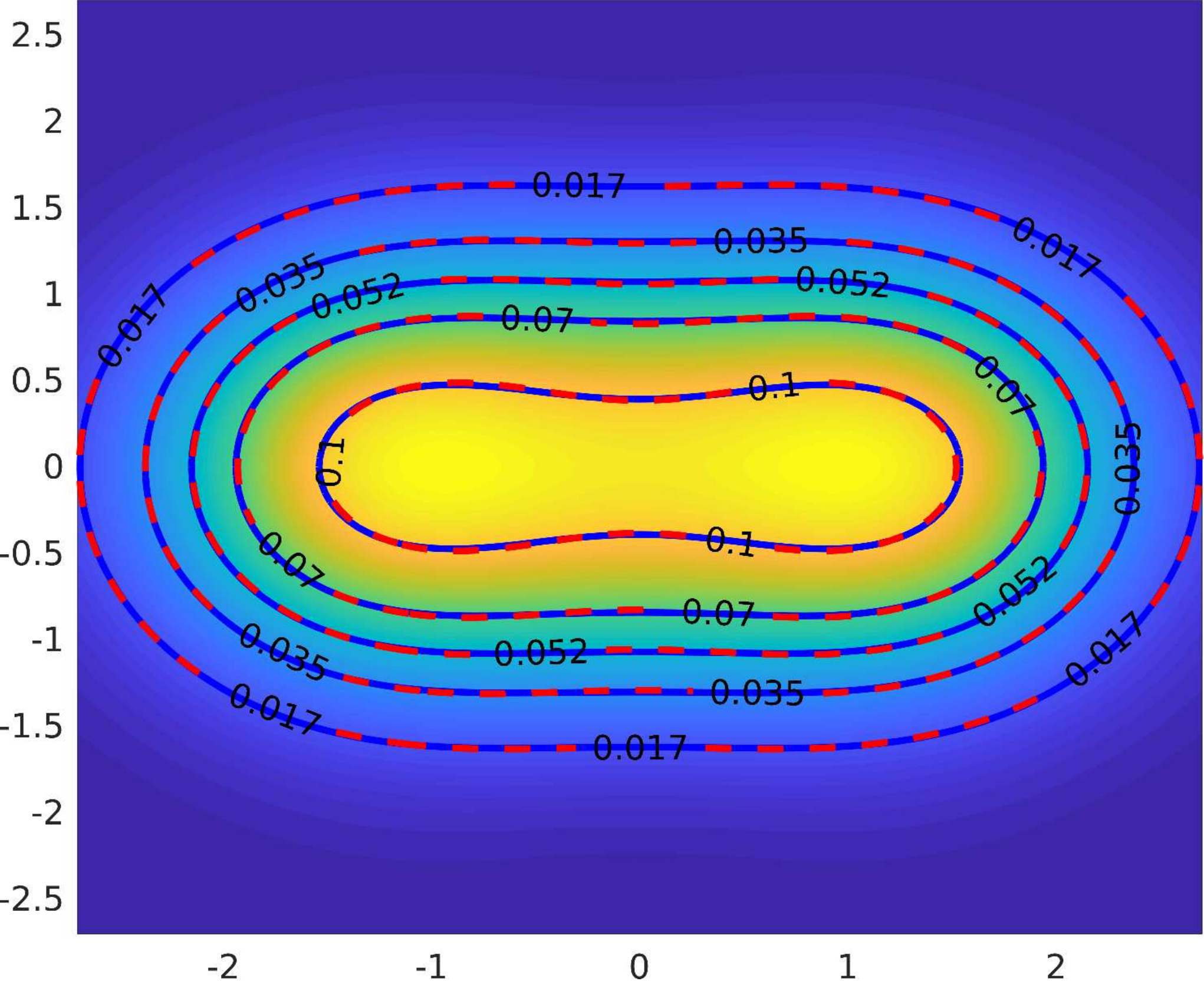}
}
\caption{The case $\gamma = -4.9$. Comparison of numerical solutions
  and the reference solution. The red dashed contours are the
  reference solutions $M_0 = 15$. The blue solid contours at different
  columns are respectively the numerical solutions $M_0 = 5$ and
  $M_0=10$.}
\label{fig:ex2_4p9_2d}
\end{figure}

\begin{figure}[!ht]
\centering
\includegraphics[width=.4\textwidth]{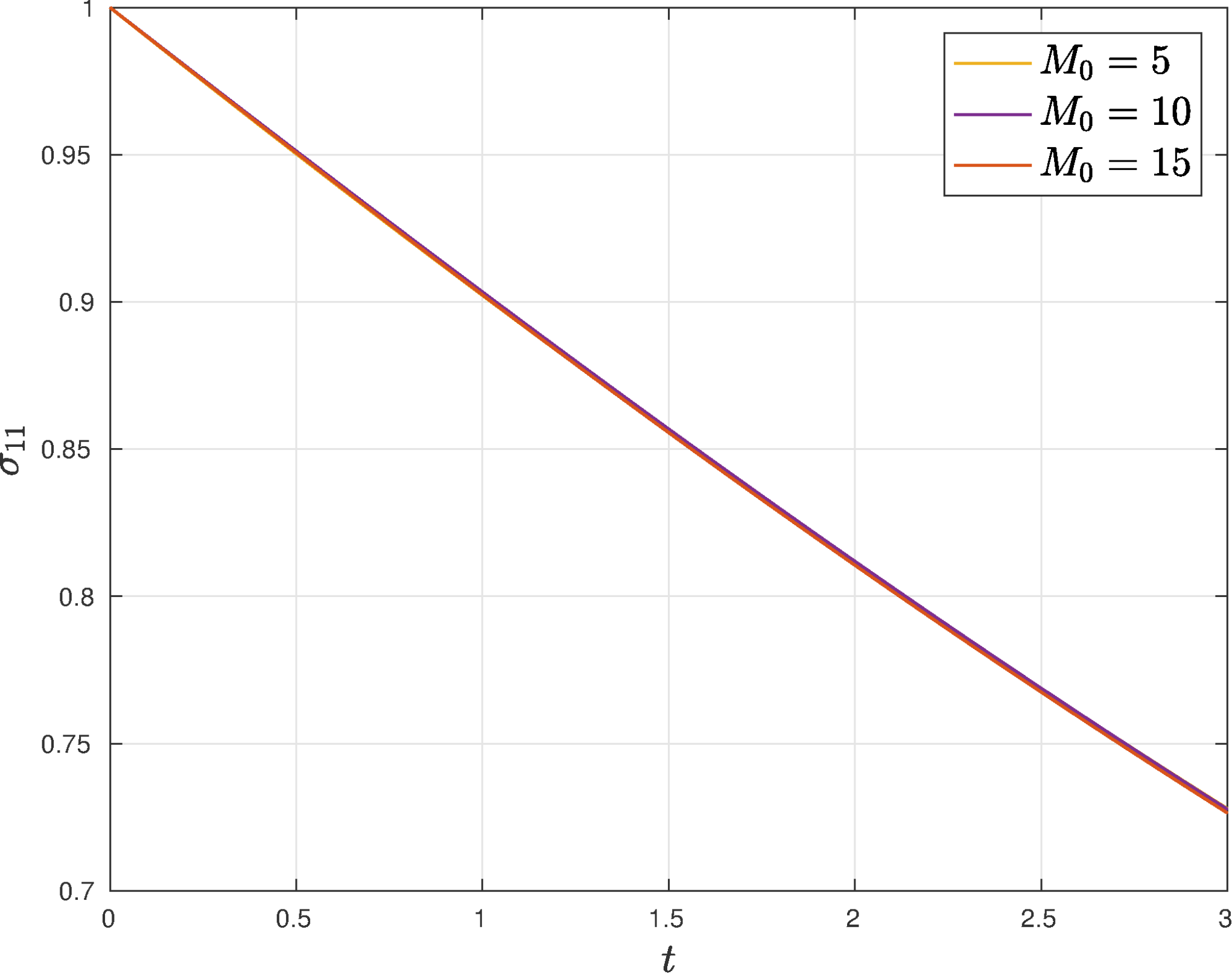}
\caption{The case $\gamma = -4.9$. Evolution of
  $\sigma_{11}(t)$. Three lines are on top of each other.}
\label{fig:ex2_4p9_sigma11}
\end{figure}

\subsection{Rosenbluth problem} 
In this example, the Rosenbluth problem is tested.  Also the Coulombian
case $\gamma=-3$ and the case $\gamma = -4.9$ are tested.  The initial
condition is from \cite{PARESCHI2000} as
\begin{equation}
  \label{eq:ex3_ini}
  f(0, \bv) = A \exp(-(B|\bv| -1)^2).
\end{equation}
The parameter $A, B$ are standardized to satisfy the condition that
the initial density and temperature all equal $1$, precisely
\begin{equation}
  \label{eq:ex3_coe}
  A = \frac{(b/3)^{3/2}}{a^{5/2}}, \qquad  B =
  \frac{(b/3)^{1/2}}{a^{1/2}}, 
\end{equation}
where $a = \pi (3\sqrt{\pi}({\rm erf}(1) + 1 + 2/e)$ and
$b = \pi(9.5\sqrt{\pi}({\rm erf}(1)+ 1) + 7/e)$. Here $e$ is the Euler
number and $\rm erf (x) = \frac{1}{\sqrt{\pi}}\int_{0}^{x}e^{-t^2}\dd
t$ is the error function.  

In order to approximate the initial distribution function well, in
this numerical test, we set $M=40$. The initial MDFs are plotted in
Figure \ref{fig:ex3_init}, which illustrates the perfect numerical
approximation to the exact distribution function.

For this example, also the three cases $M_0 = 5, 10, 15$ are
tested. The numerical solution with $M_0=15$ is treated as the
reference solution.  The corresponding one and two dimensional
marginal distribution functions for the Coulombian $\gamma = -3$ are
shown in Figure \ref{fig:ex3_1d} and \ref{fig:ex3_2d}, where the
numerical solutions are converging to the reference solution and that
with $M_0 = 10$ is almost the same as the reference solution.

Moreover, our new model can also approximate the $\gamma = -4.9$ case
well. The two dimensional marginal distribution functions for
$\gamma = -4.9$ are presented in Figure \ref{fig:ex3_4p9}.  The
numerical solution with $M_0=15$ is also chosen as the reference
solution. Similar to the example in Sec \ref{sec:ex2}, the time
evolution of the distribution function with $\gamma = -4.9$ is also
slower than that with $\gamma = -3$. Further more, though there are
distinct differences between the numerical solution $M_0=5$ and
$M_0=15$, the numerical solutions are converging to the reference
solutions $M_0 = 15$.

\begin{figure}[!ht]
\centering
\subfloat[Initial MDF $g(0,v_1)$\label{fig:ex3_init_1d}]{%
  \includegraphics[height=.24\textwidth]{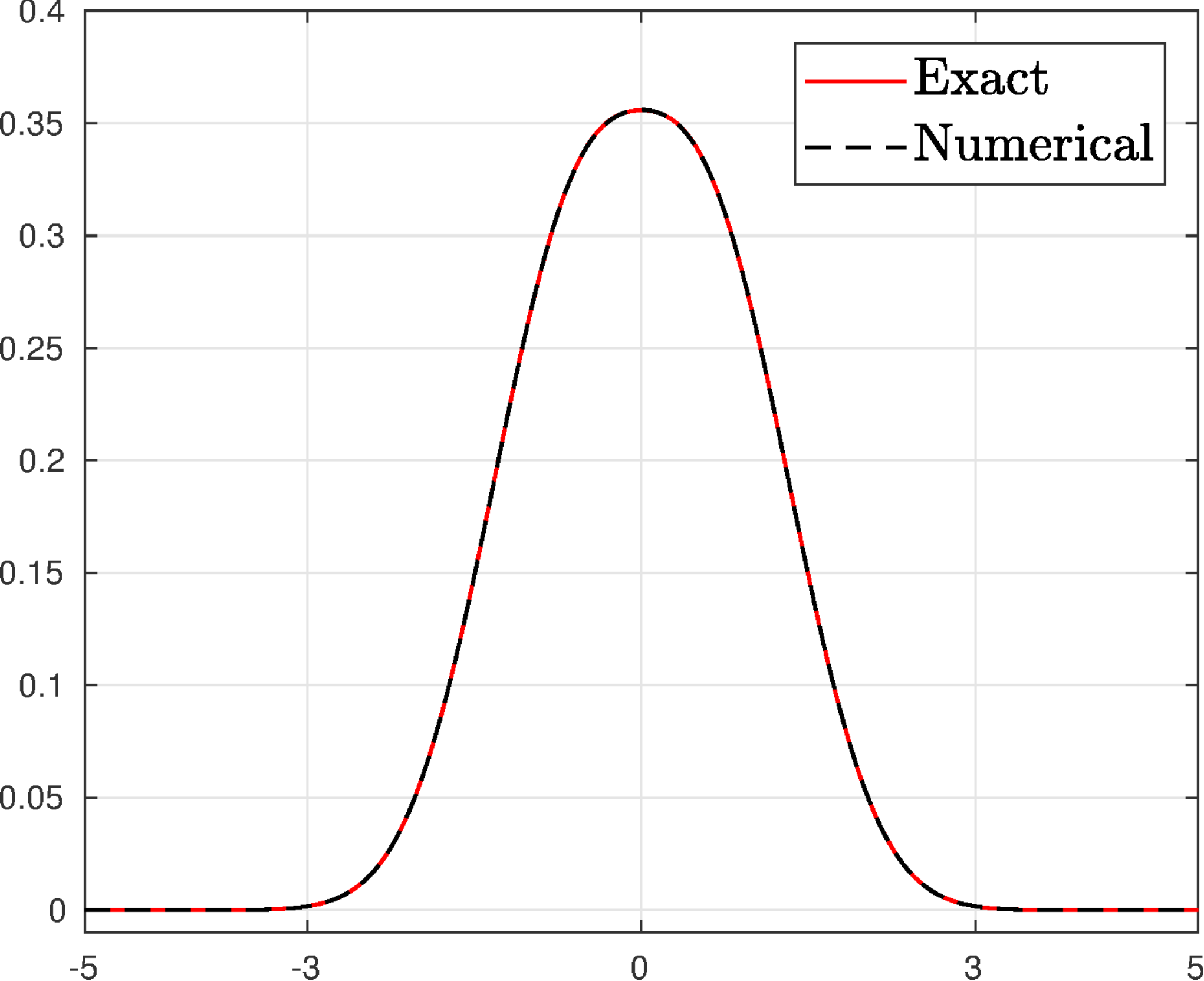}
}\hfill
\subfloat[Contours of $h(0,v_1,v_2)$\label{fig:ex3_init_2d_contour}]{%
  \includegraphics[height=.24\textwidth]{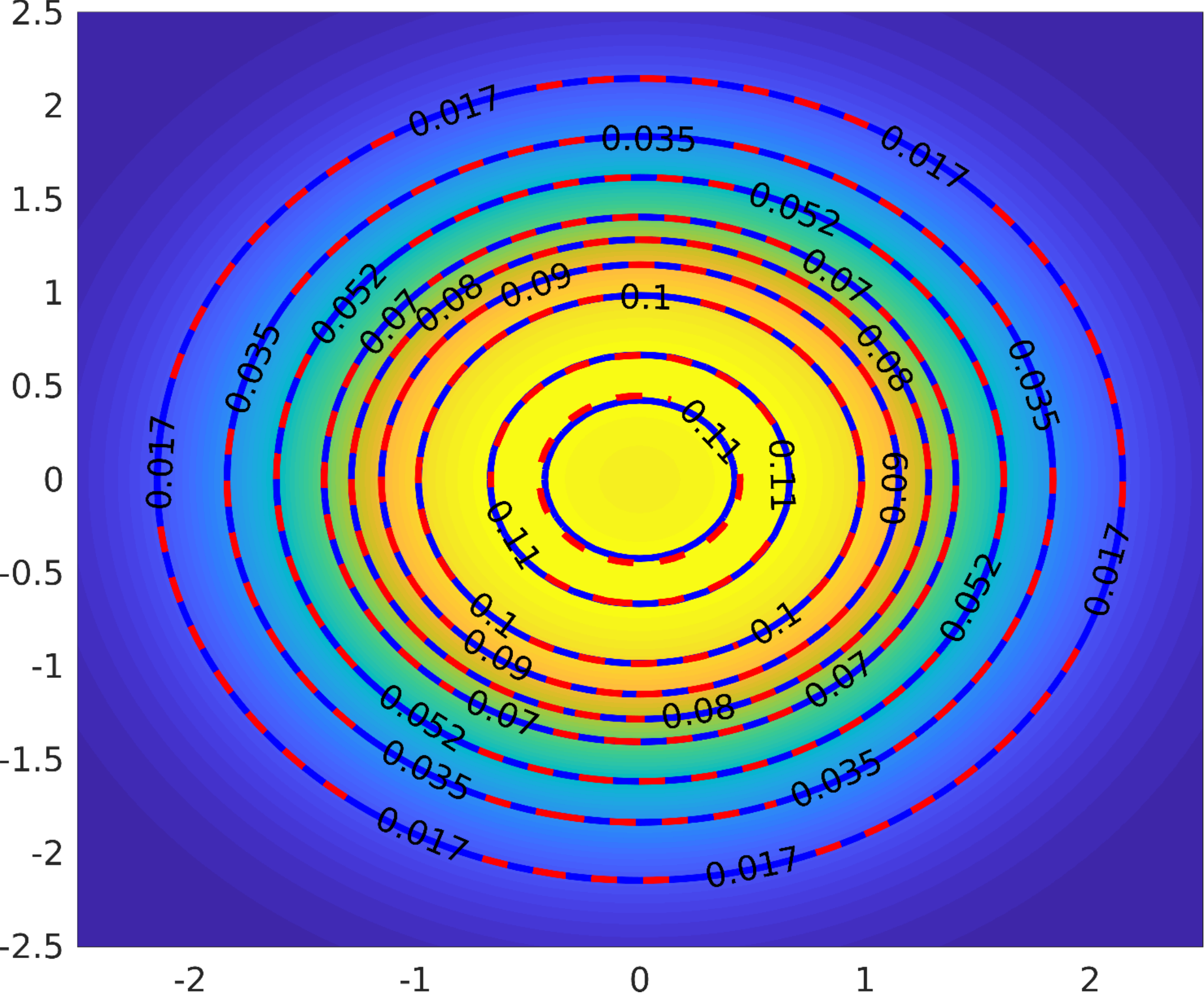}
}\hfill
\subfloat[Initial MDF $h(0,v_1,v_2)$\label{fig:ex3_init_2d}]{%
  \includegraphics[height=.25\textwidth]{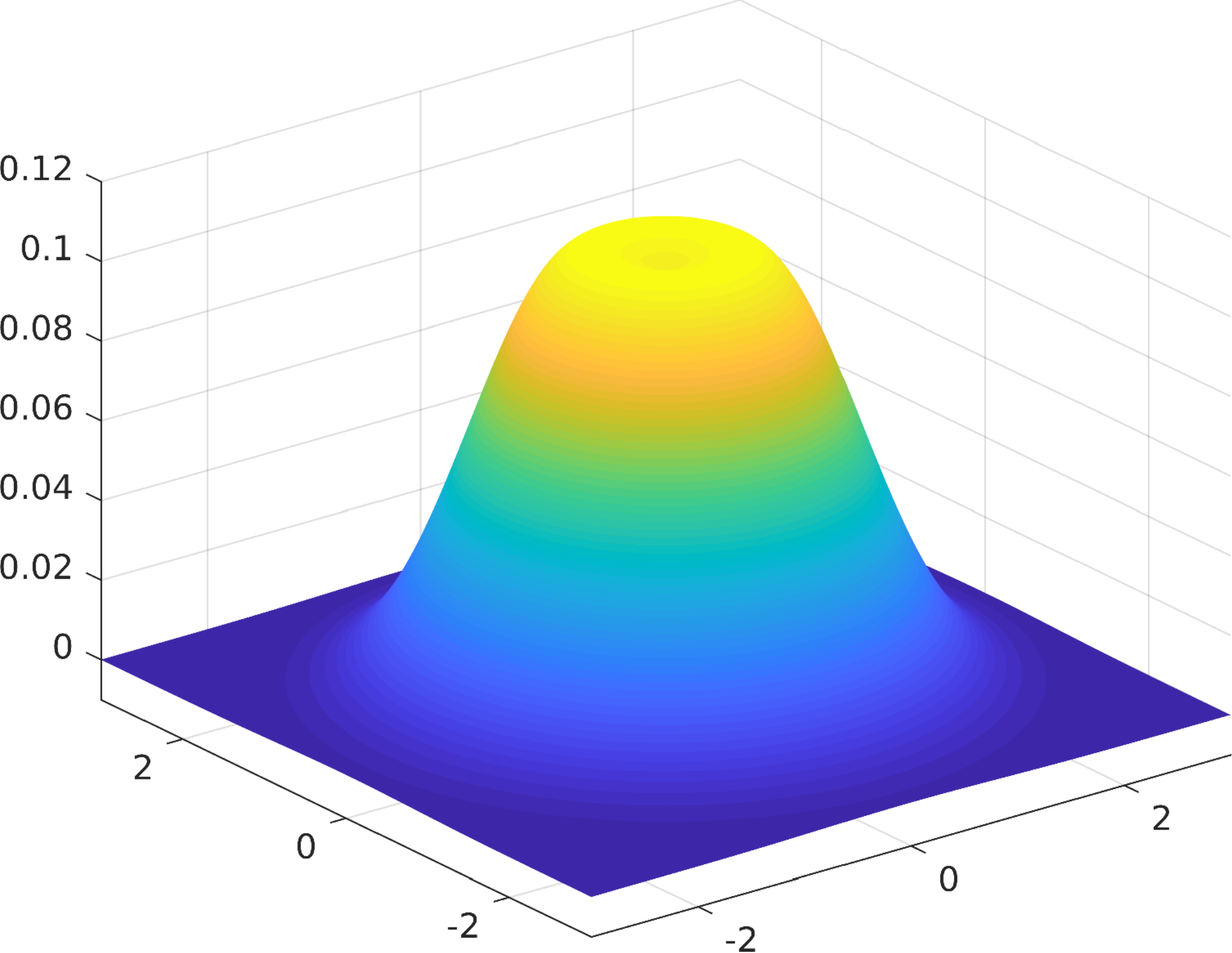}
}
\caption{Figure (a) is the initial marginal distribution function
  $g(0, v_1)$.  The red solid line corresponds to the exact solution,
  and the blue dashed line corresponds to the numerical approximation.
  Figure (b) is the initial marginal distribution functions
  $h(0, v_1, v_2)$. The blue solid lines correspond to the exact
  solution, and the red dashed lines correspond to the numerical
  approximation. Figure (c) shows only the numerical approximation.}
\label{fig:ex3_init}
\end{figure}

\begin{figure}[!ht]
\centering
\subfloat[$t=0.4$]{%
  \includegraphics[width=.3\textwidth]{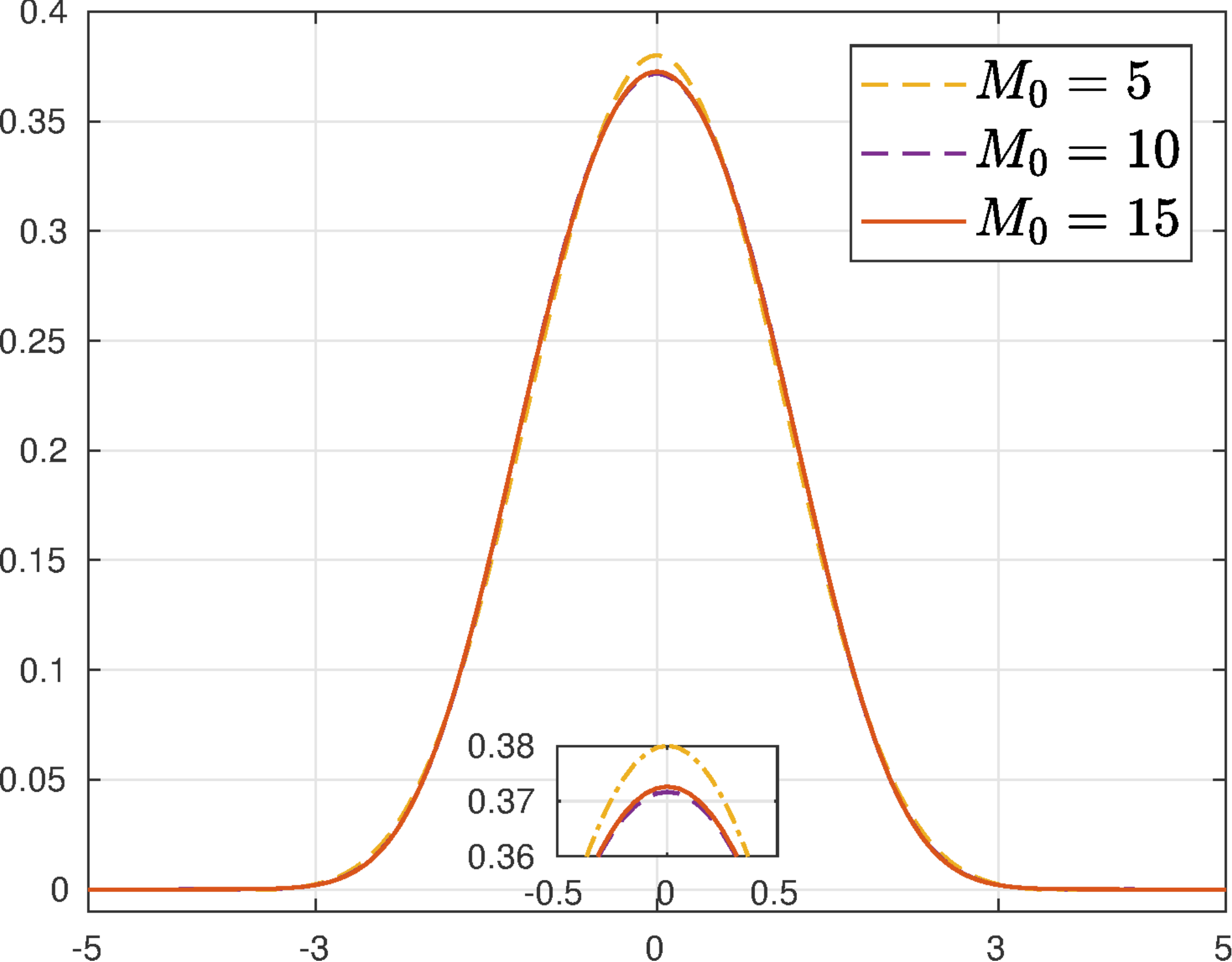}
} \hfill
\subfloat[$t=0.6$]{%
  \includegraphics[width=.3\textwidth]{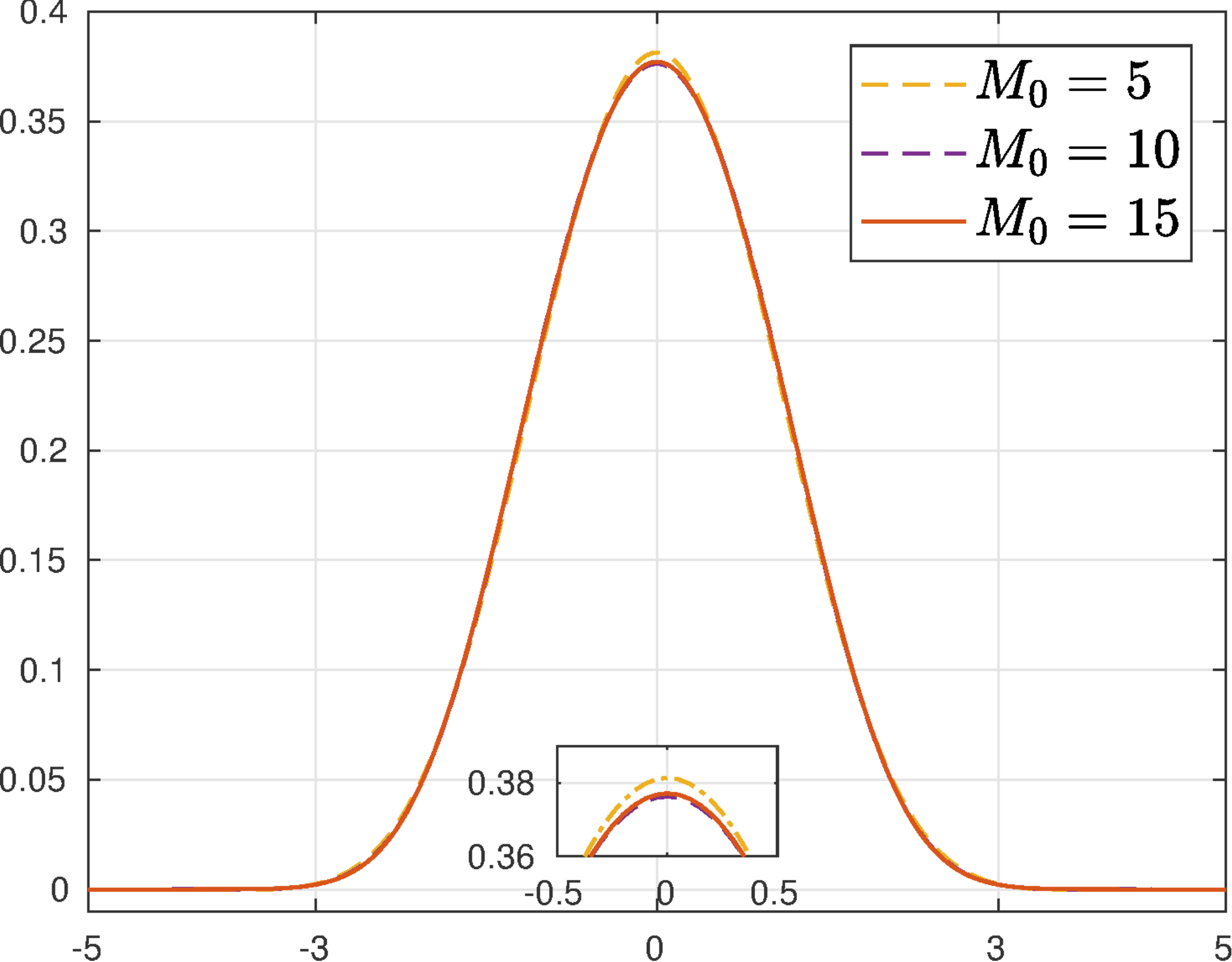}
} \hfill
\subfloat[$t=2$]{%
  \includegraphics[width=.3\textwidth]{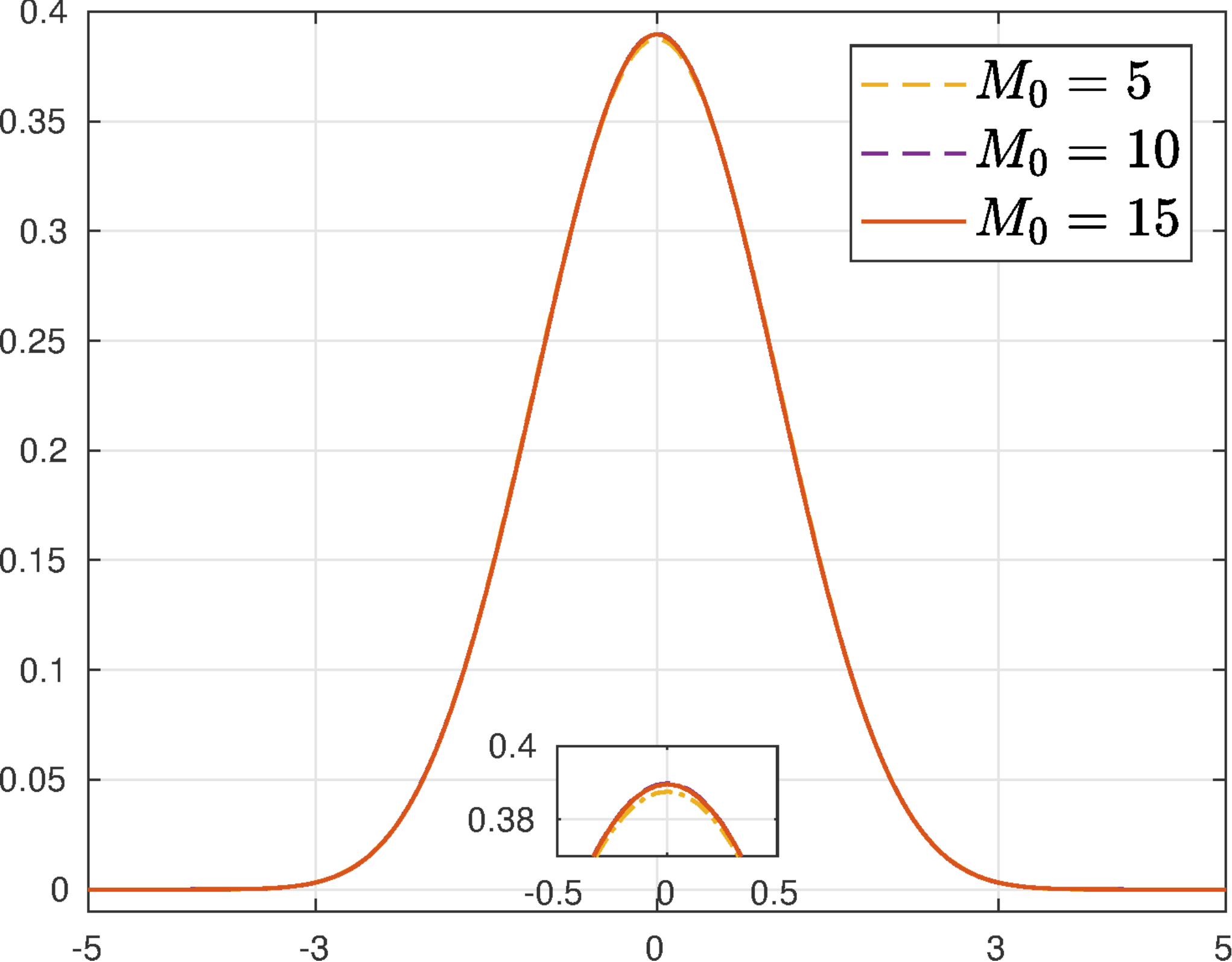}
}
\caption{The Coulombian case $\gamma = -3$. Marginal distribution
functions at different times.}
\label{fig:ex3_1d}
\end{figure}

\begin{figure}[!ht] \centering \subfloat[$t=0.4, M_0 = 5$]{%
  \includegraphics[width=.33\textwidth]{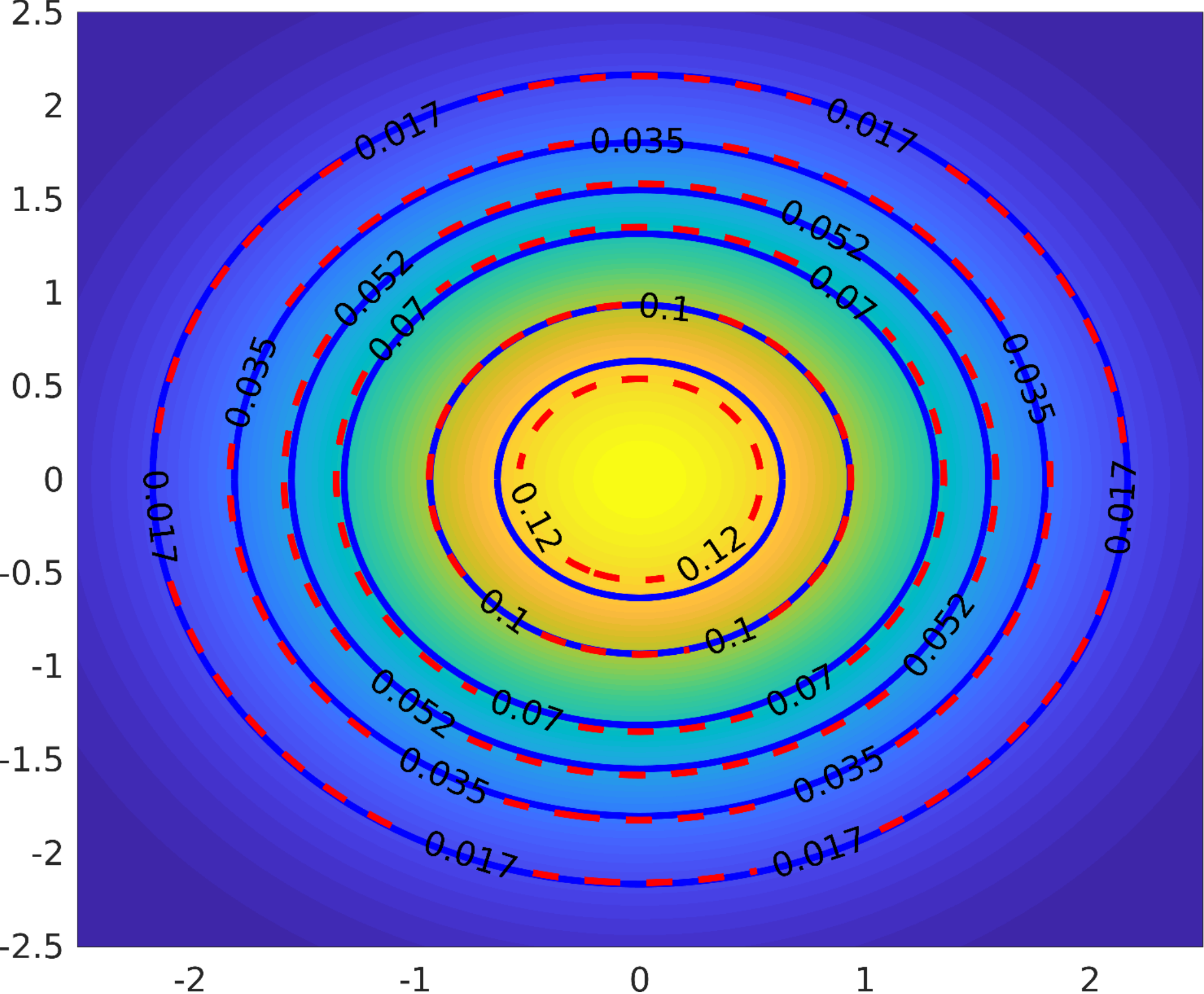}
} 
\subfloat[$t=0.6, M_0 = 5$]{%
  \includegraphics[width=.33\textwidth]{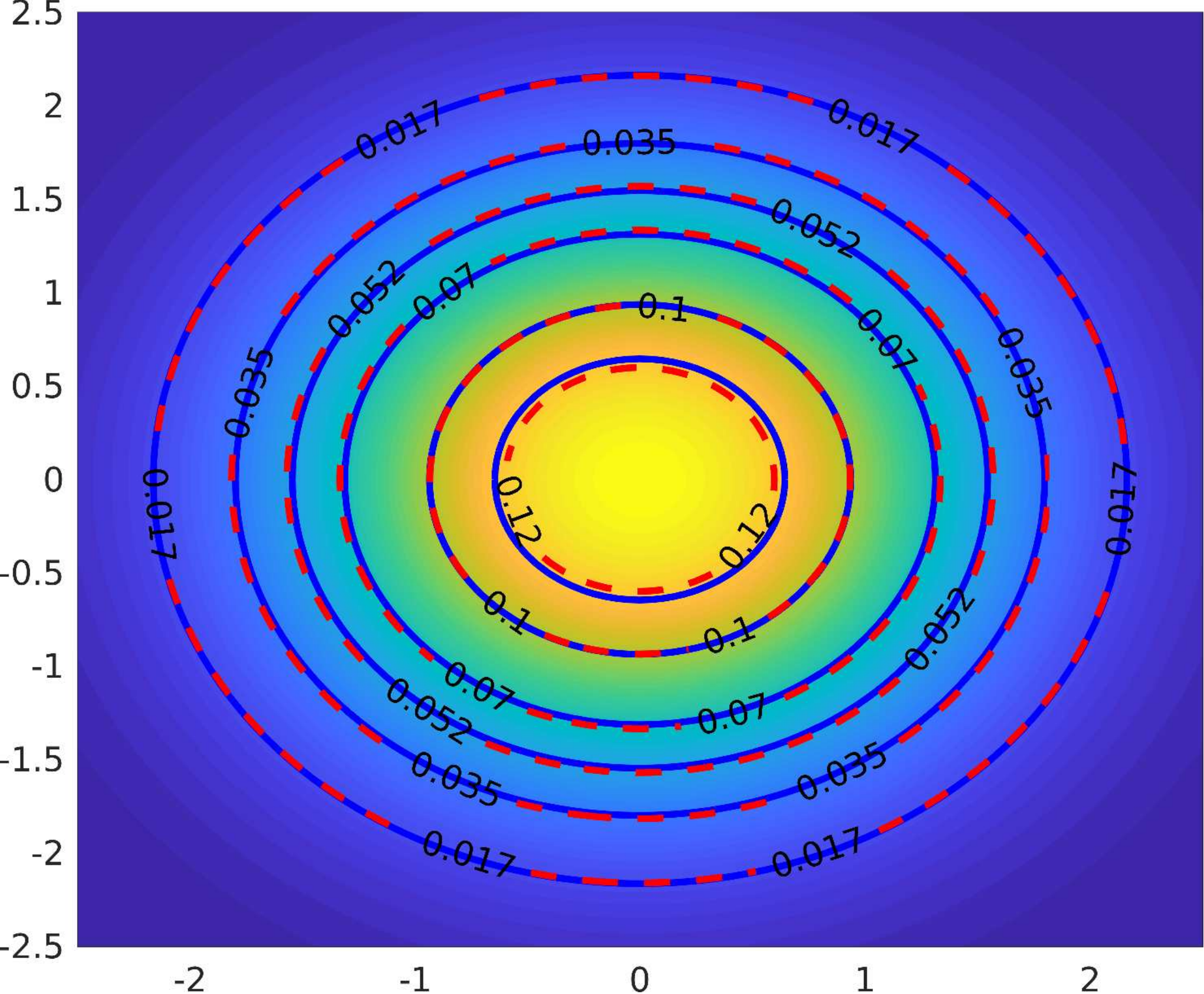}
} 
\subfloat[$t=2, M_0 = 5$]{%
  \includegraphics[width=.33\textwidth]{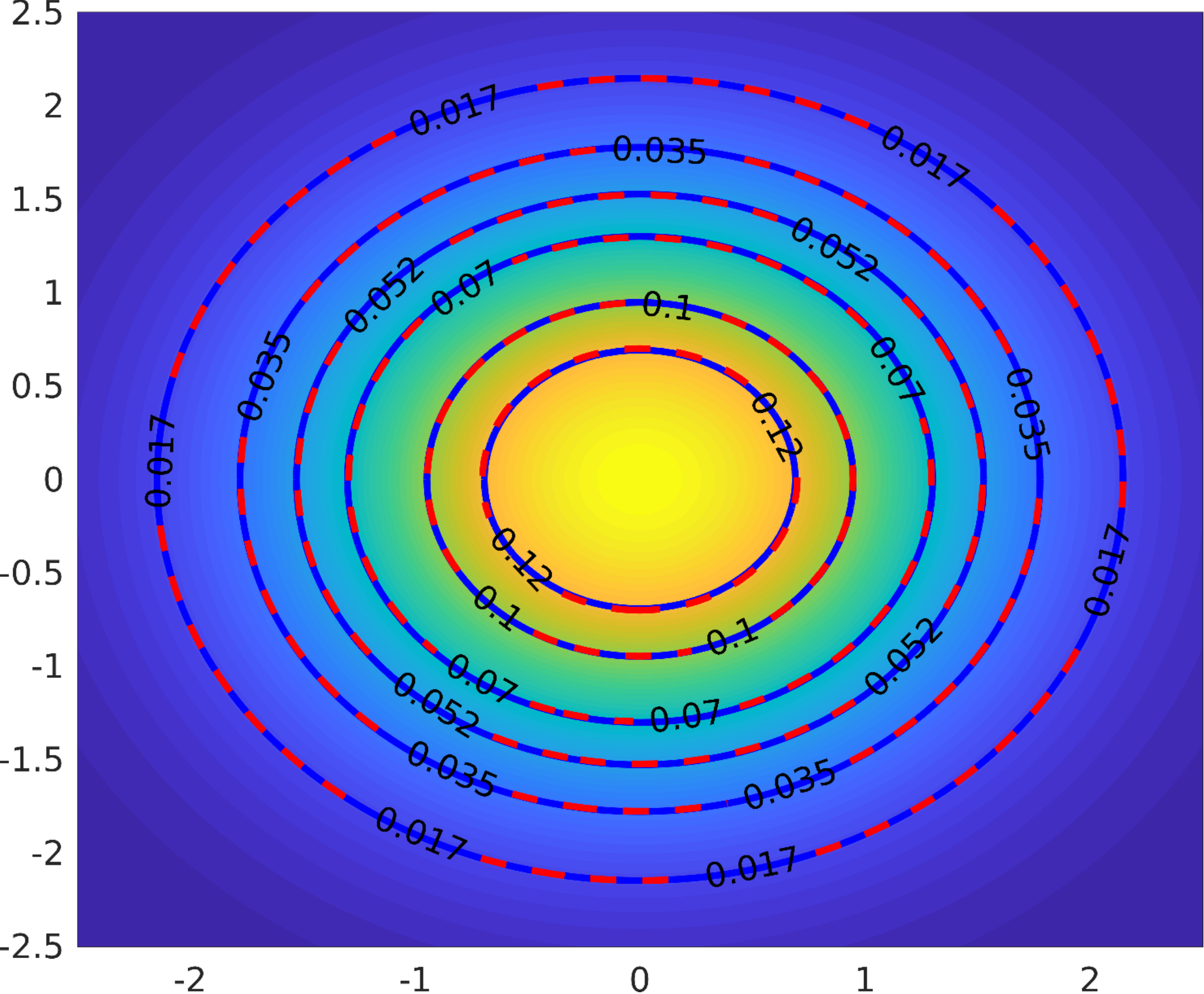}
} \\
\subfloat[$t=0.4, M_0 = 10$]{%
  \includegraphics[width=.33\textwidth]{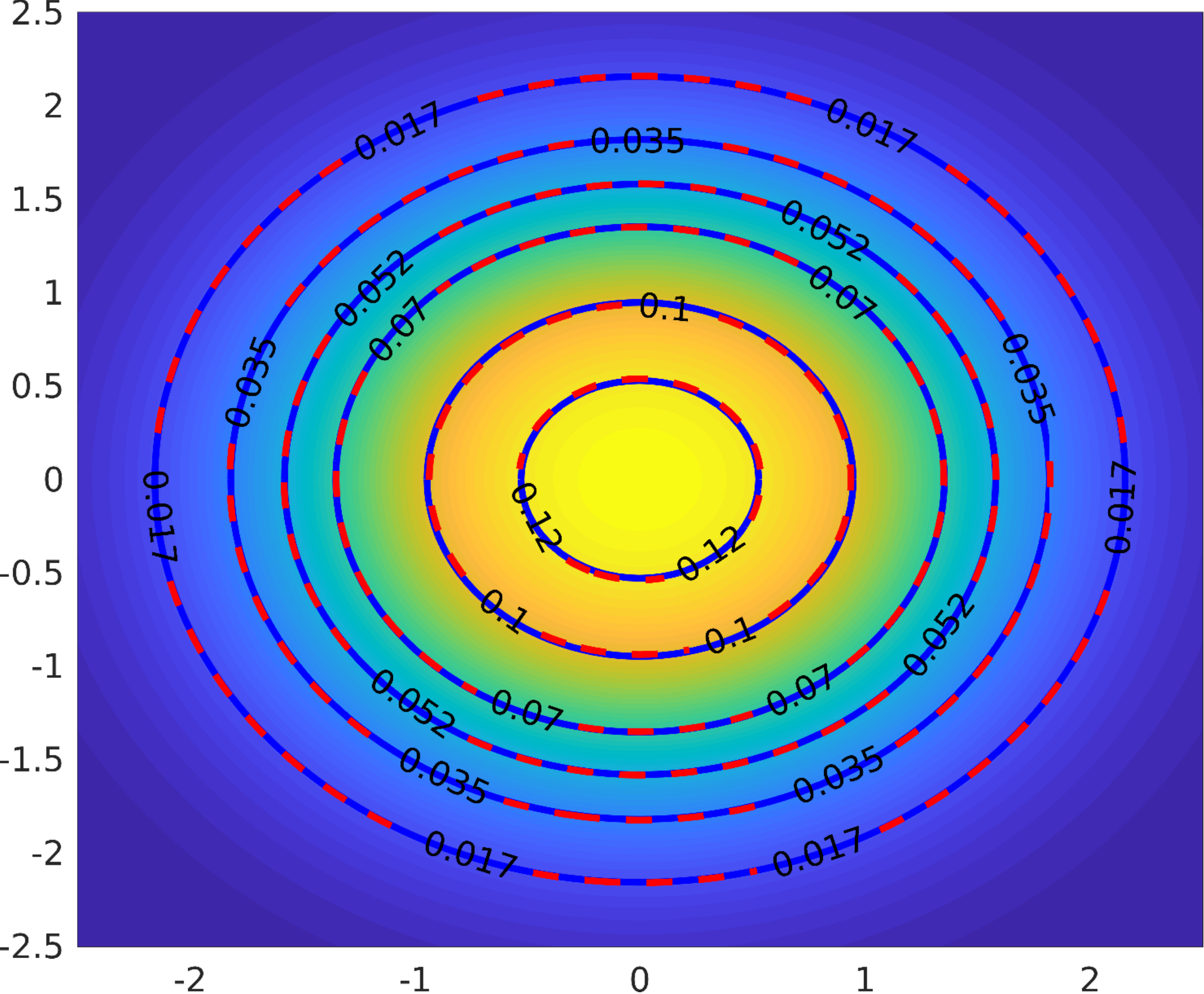}
} 
\subfloat[$t=0.6, M_0 = 10$]{%
  \includegraphics[width=.33\textwidth]{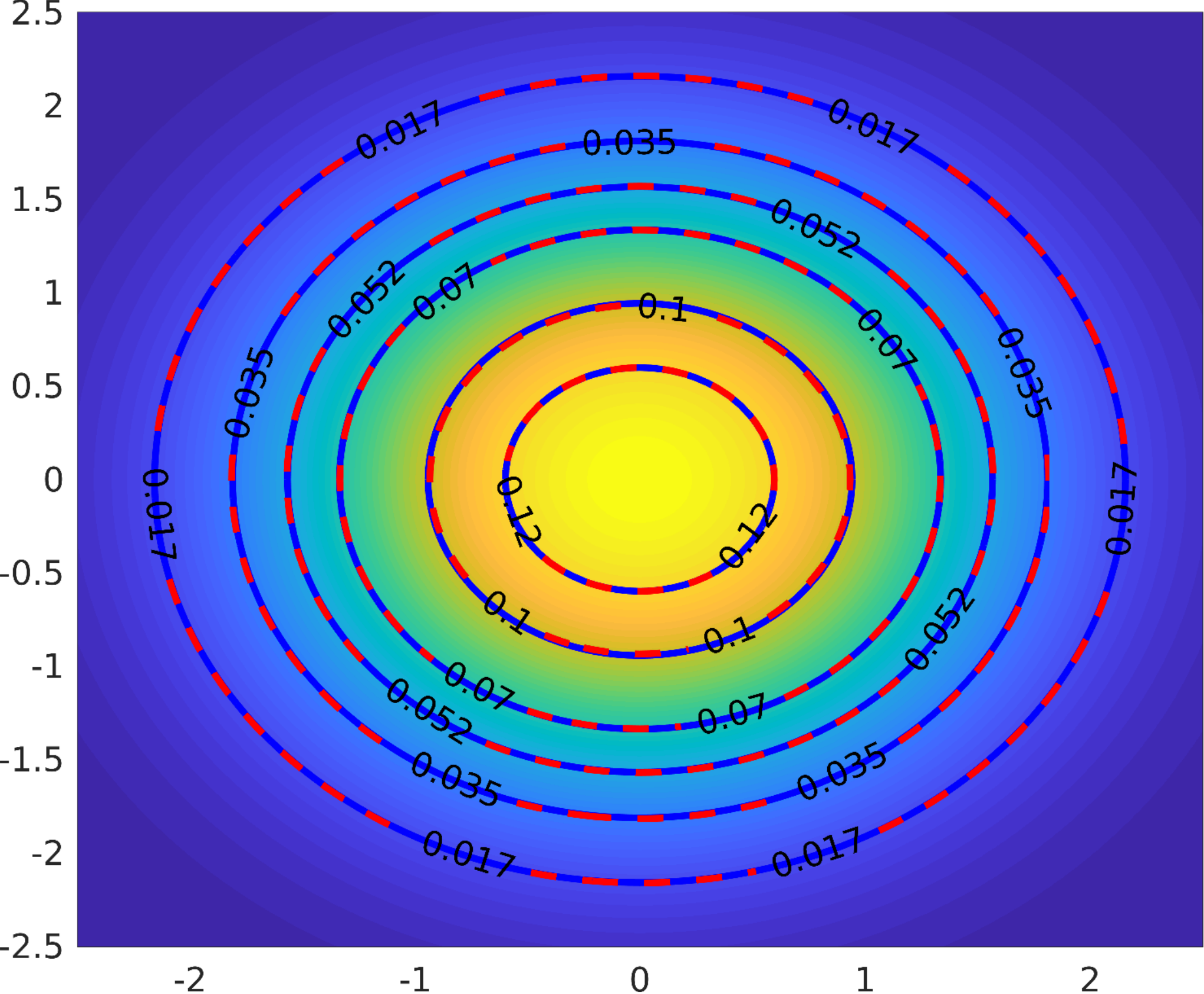}
} 
\subfloat[$t=2, M_0 = 10$]{%
  \includegraphics[width=.33\textwidth]{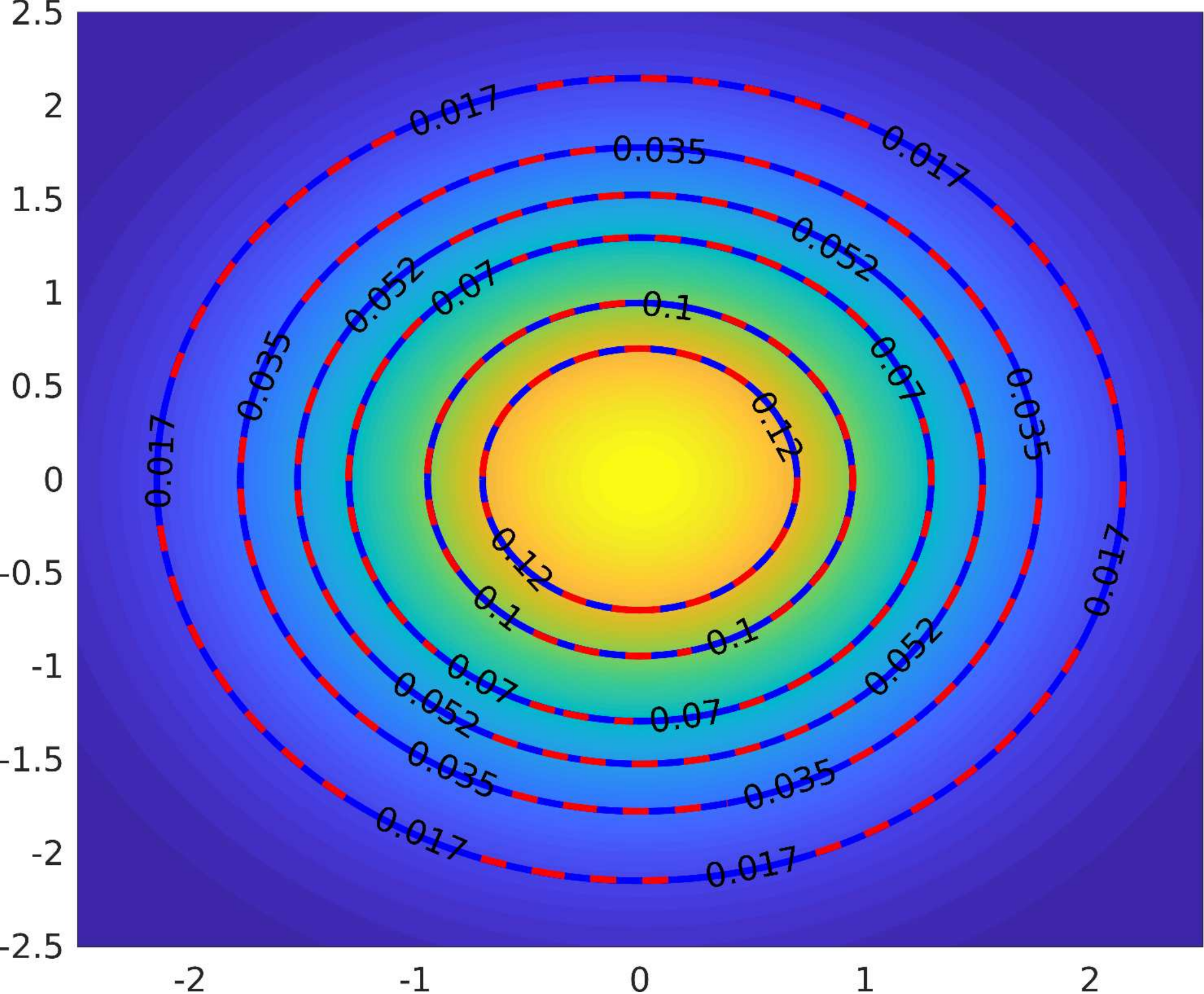}
}
\caption{The Coulombian case $\gamma = -3$. Comparison of numerical
  solutions and the reference solution $M_0 =15$. The red dashed
  contours are the reference solution. The blue solid contours in
  different columns are respectively the numerical solutions $M_0 = 5$
  and $M_0=10$.}
\label{fig:ex3_2d}
\end{figure}

\begin{figure}[!ht]
\centering
\subfloat[$t=0.4$]{%
  \includegraphics[width=.3\textwidth]{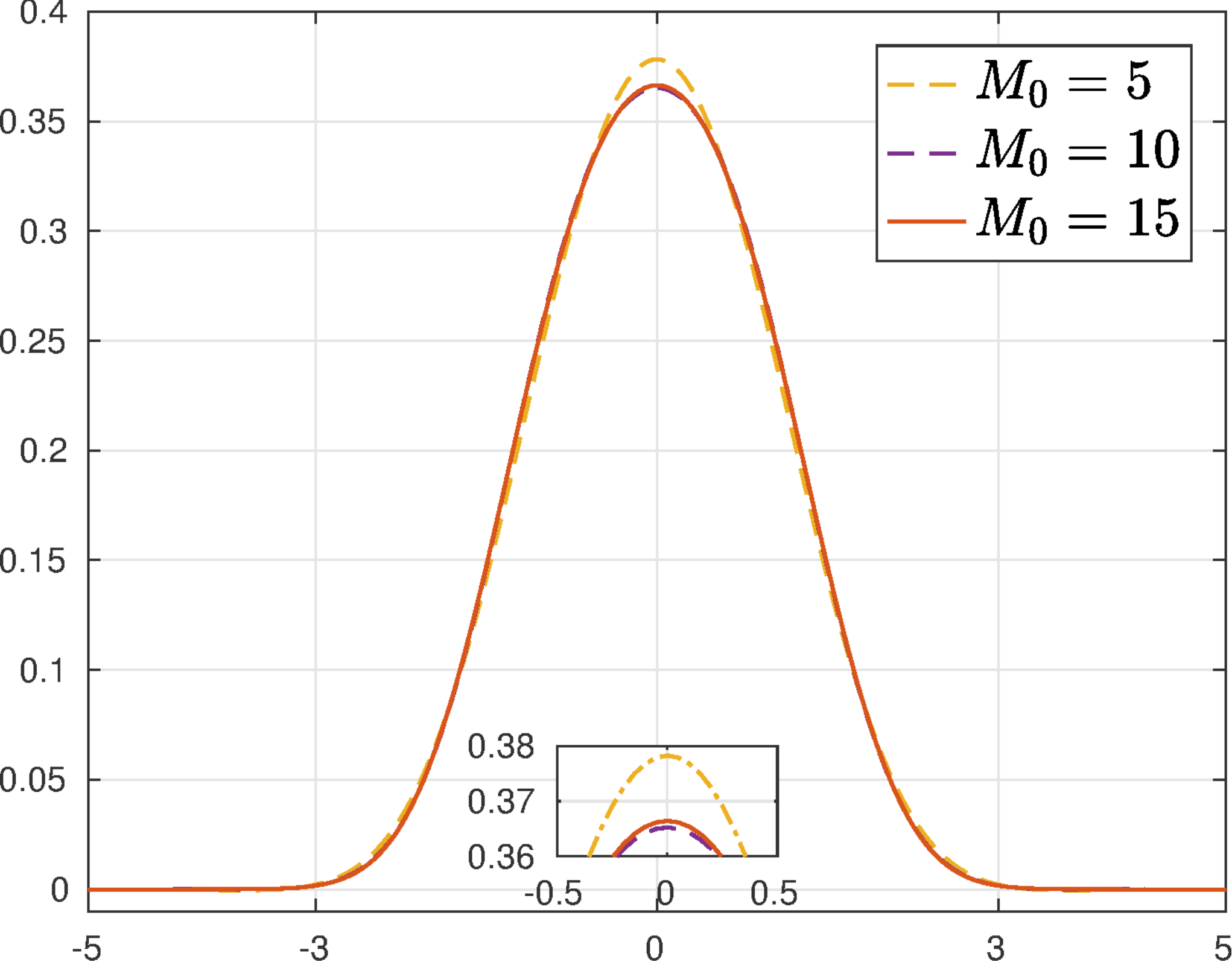}
} \hfill
\subfloat[$t=0.6$]{%
  \includegraphics[width=.3\textwidth]{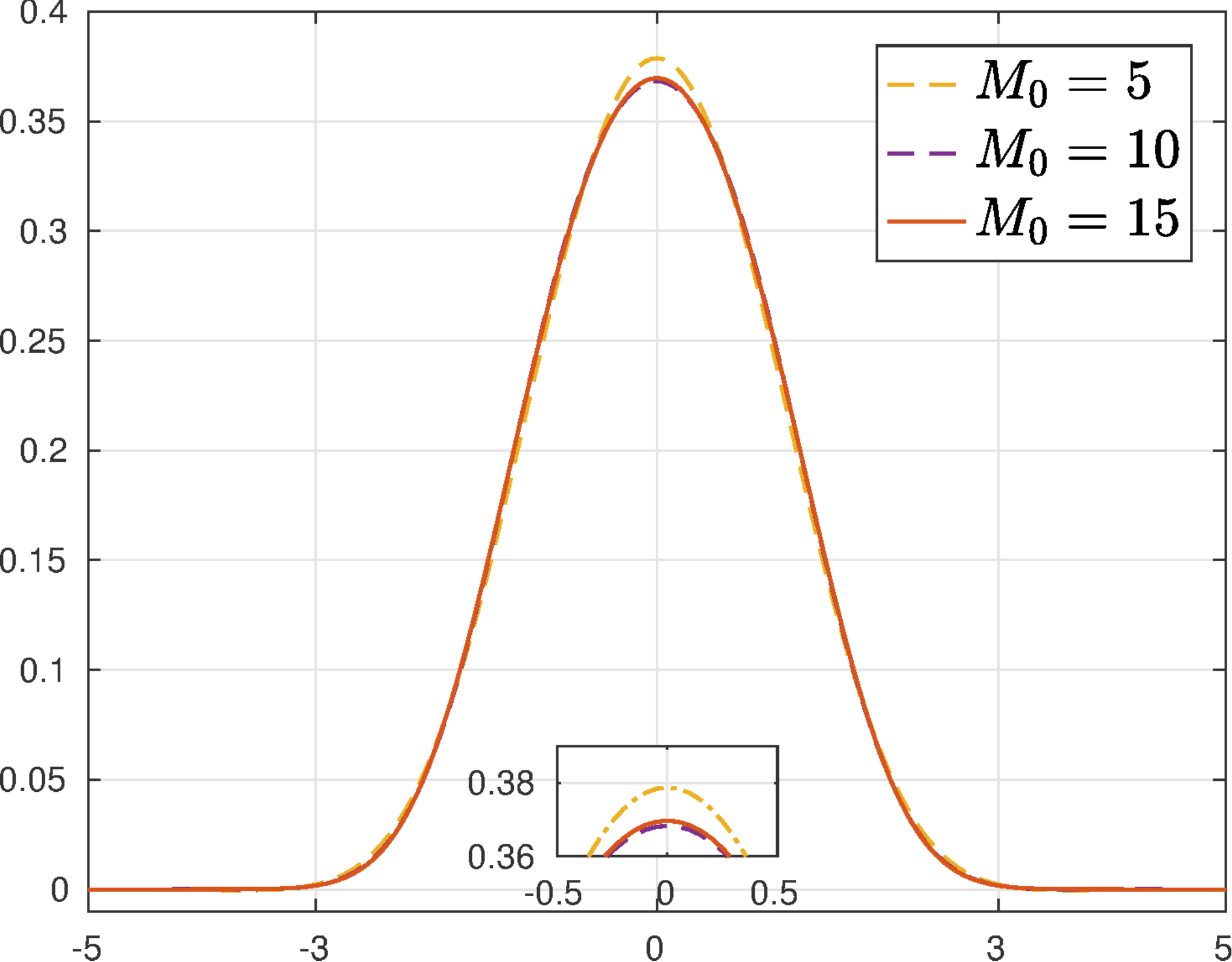}
} \hfill
\subfloat[$t=2$]{%
  \includegraphics[width=.3\textwidth]{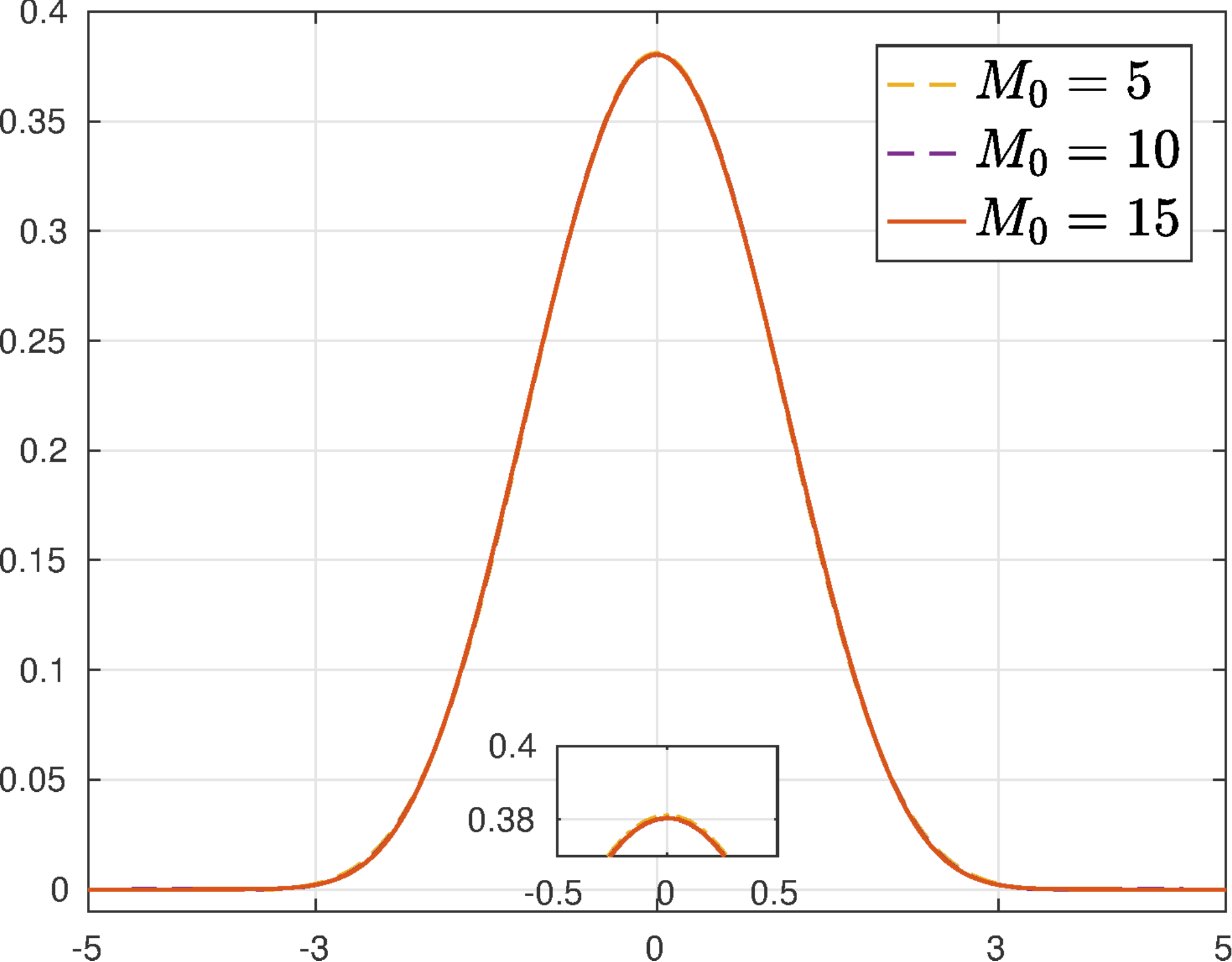}
} \\
\subfloat[$t=0.4, M_0 = 5$]{%
  \includegraphics[width=.33\textwidth]{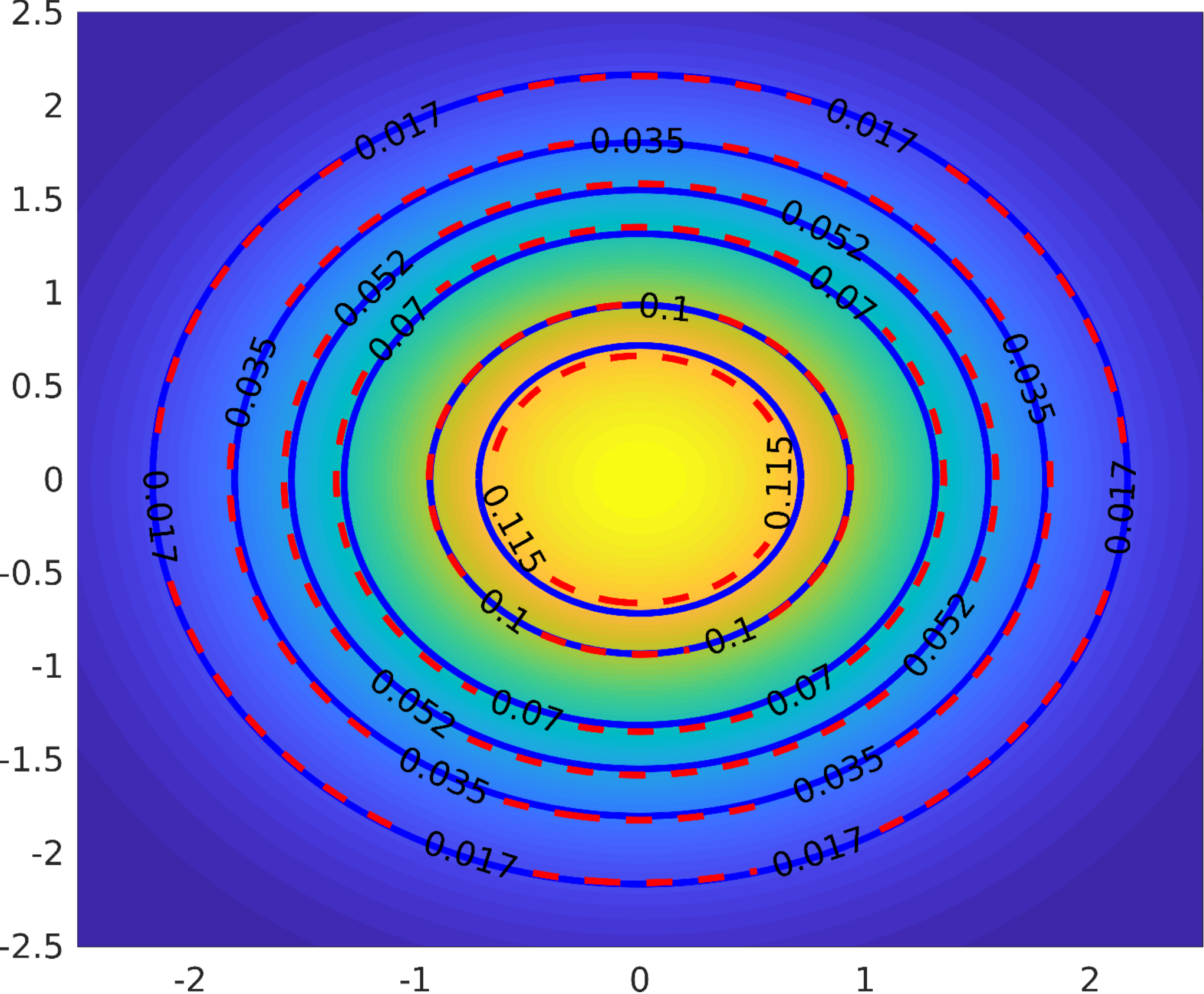}
} 
\subfloat[$t=0.6, M_0 = 5$]{%
  \includegraphics[width=.33\textwidth]{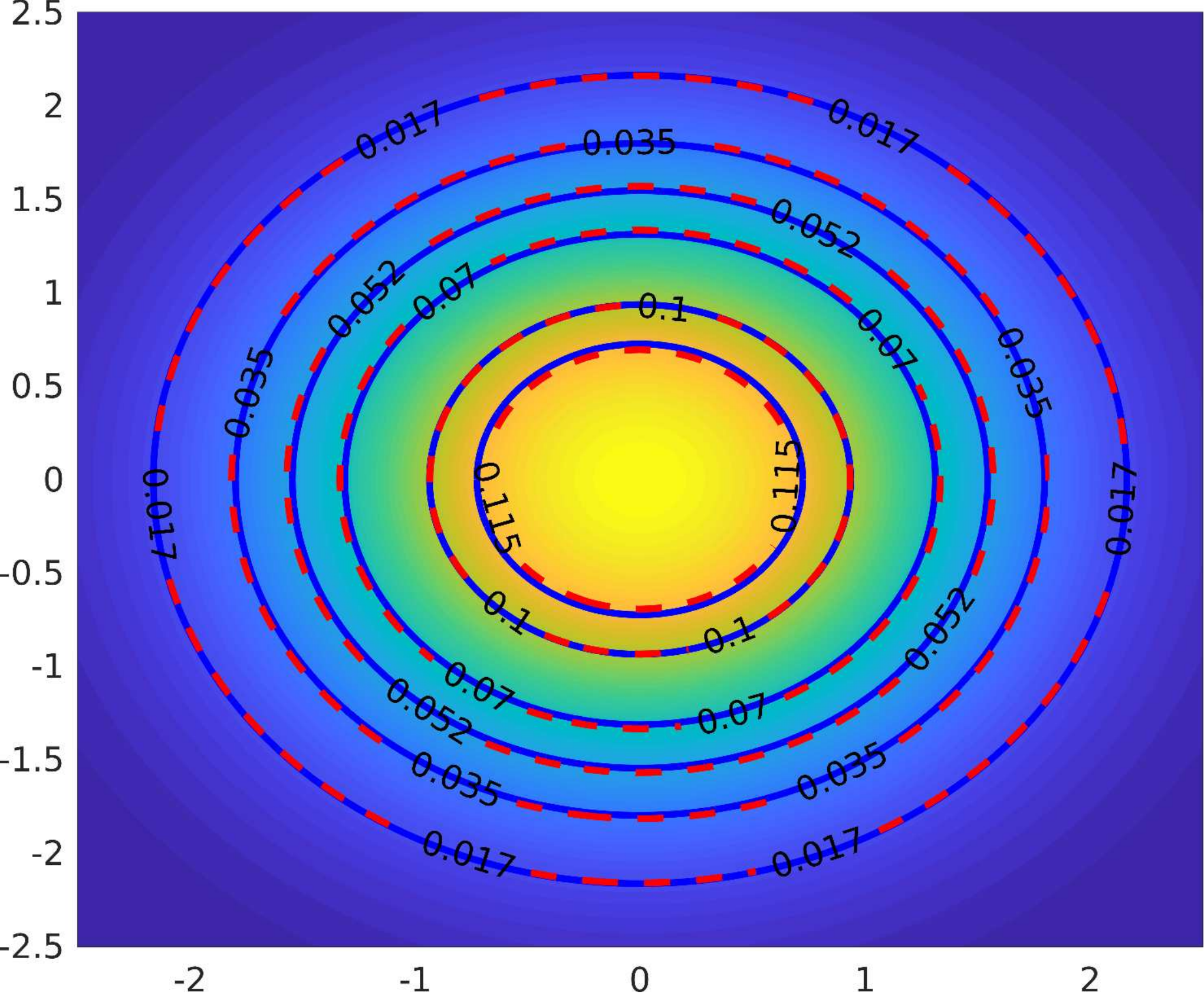}
} 
\subfloat[$t=2, M_0 = 5$]{%
  \includegraphics[width=.33\textwidth]{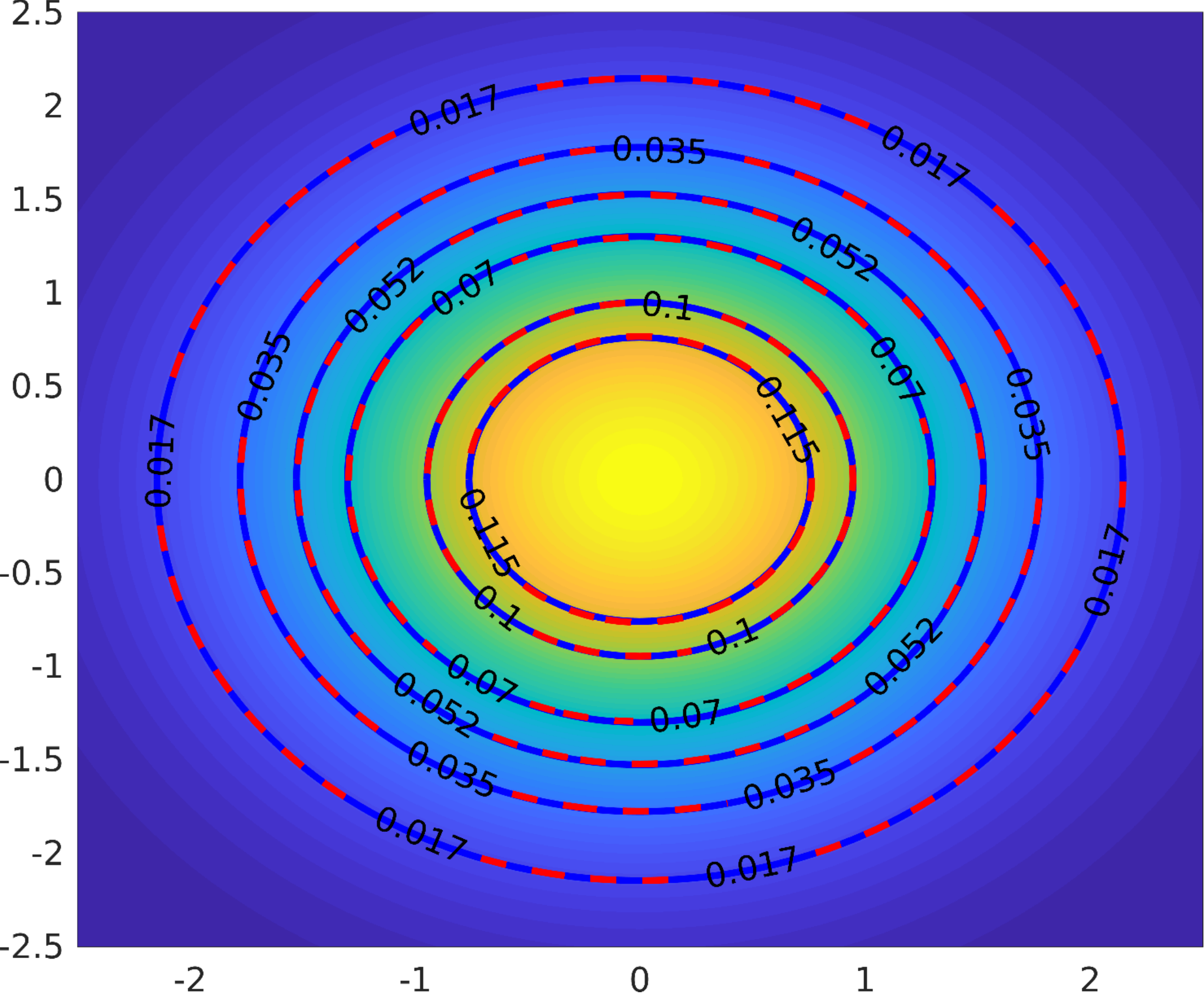}
}\\
\subfloat[$t=0.4, M_0 = 10$]{%
  \includegraphics[width=.33\textwidth]{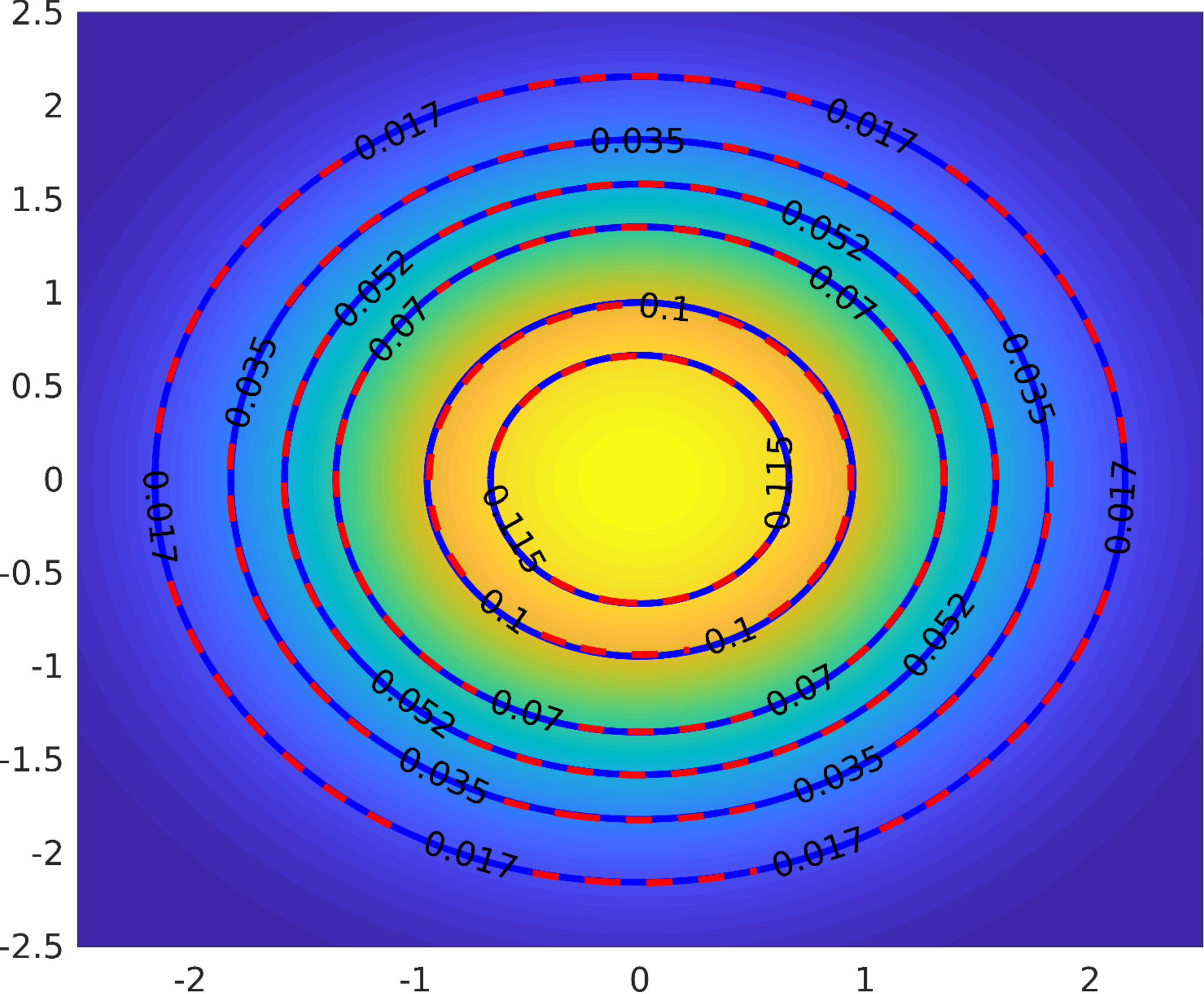}
} 
\subfloat[$t=0.6, M_0 = 10$]{%
  \includegraphics[width=.33\textwidth]{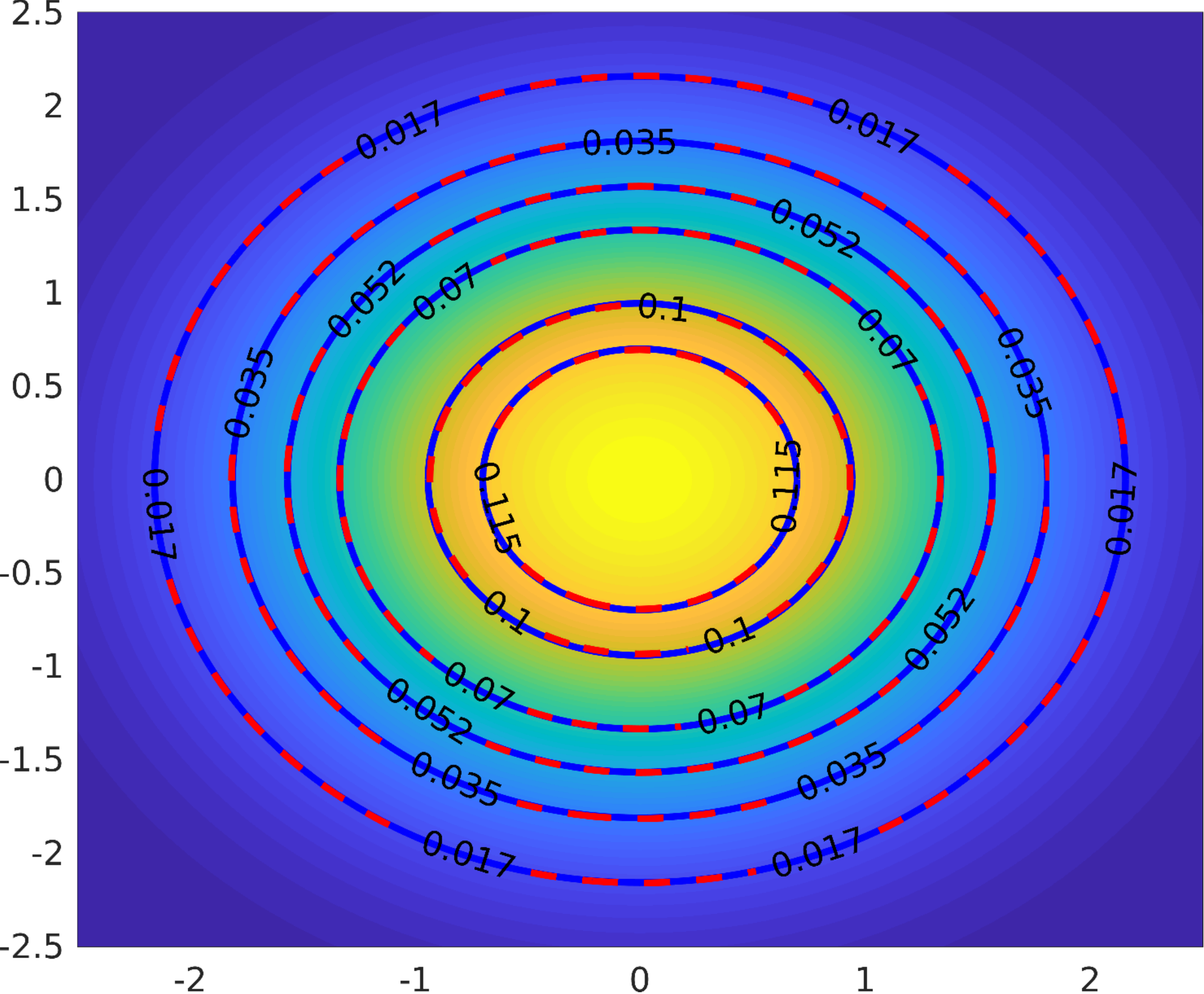}
} 
\subfloat[$t=2, M_0 = 10$]{%
  \includegraphics[width=.33\textwidth]{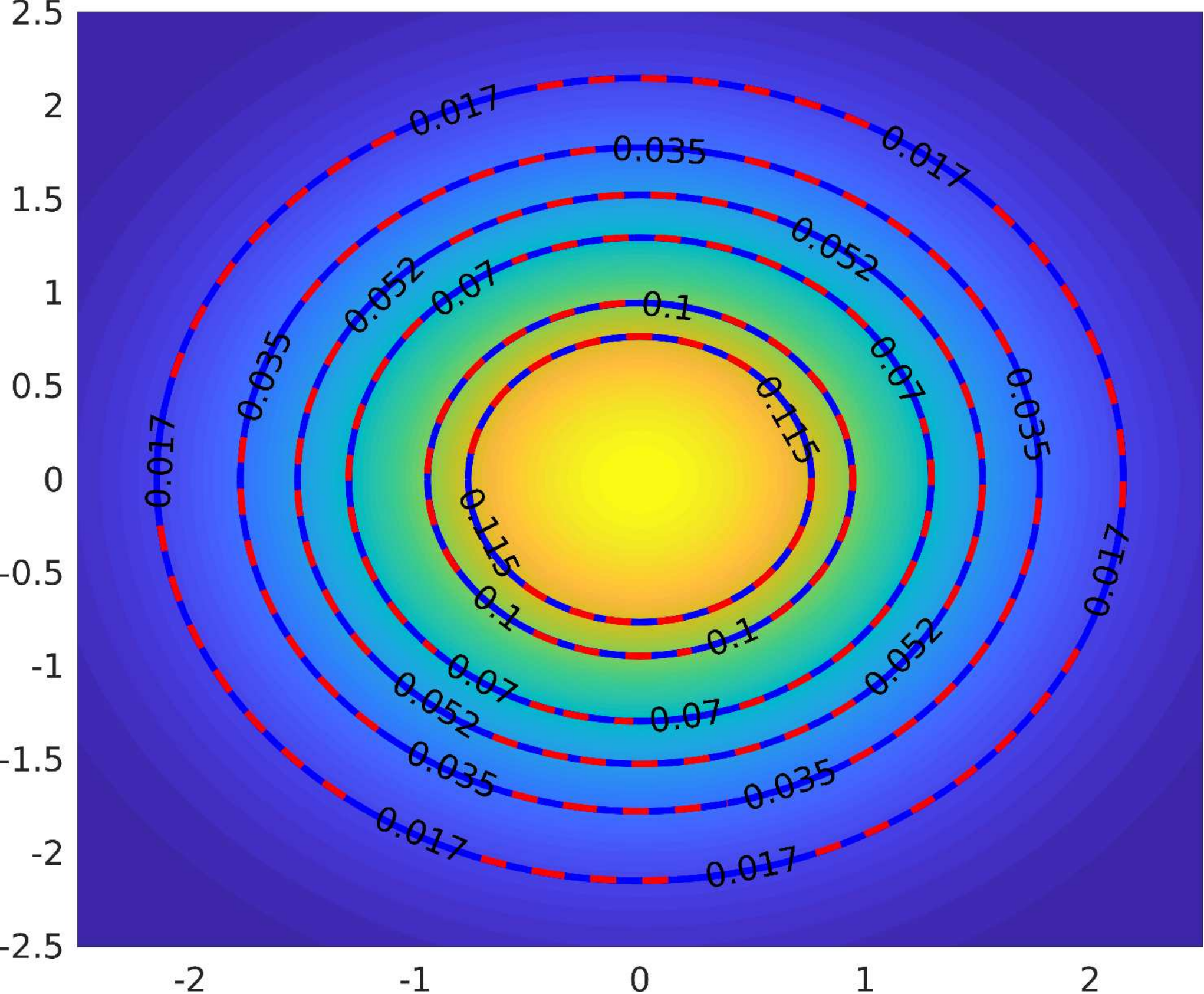}
}
\caption{The Coulombian case $\gamma = -4.9$. Marginal distribution
  functions at different times. The first row is the marginal
  distribution $g(t, v_1)$, and the latter two rows are the marginal
  distribution functions $h(t, v_1, v_2)$ with different $M_0$.  The
  red dashed contours are the reference solutions $M_0 = 15$. The blue
  solid contours at different rows are respectively the numerical
  solutions when $M_0 = 5$ and $M_0=10$.}
\label{fig:ex3_4p9}
\end{figure}


\section{Conclusion} 
\label{sec:conclusion} 
In this paper, we focus on applying the Hermite spectral method to
develop an efficient and accurate way of approximating and numerically
solving the Fokker-Planck-Landau equation. Basic properties of Hermite
polynomials are utilized to obtain a simplified expression of the
coefficients, which renders the numerical method feasible. Burnett
polynomials are introduced to deal with the super singular integral in
the computation. This method could cover more practical cases up to
$\gamma>-5$.

A novel collision model is built with a combination of quadratic
collision model and the linearized collision model brought up by
C. Villani \cite{Villani1998on}. The numerical experiments validate
the efficiency of this new model. With the model introduced, the
numerical solutions are of a high accuracy, as well as an affordable
computational cost. This method should be further validated in the
numerical tests for the full FPL equation with spatial variables,
which will be one of the future works.

\section*{Acknowledgements}
Ruo Li is supported by the National National Scientific Foundation of
China (Grant No. 91630310) and Science Challenge Project
(No. TZ2016002). Yanli Wang is supported by the National Natural
Scientific Foundation of China (Grant No. 11501042), and Chinese
Postdoctoral Science Foundation of China (2018M631233).

\section{Appendix}
\label{sec:appendix}
\subsection{Proof of Theorem \ref{thm:coeA}}
\label{sec:Appendix_coeA} We shall present of proof of Theorem 1 here.
In order to prove Theorem \ref{thm:coeA}, we first introduce the lemma
below:
\begin{lemma}
\label{thm:Hermite_cv}
Let $\bv = \bh + \bg/2$ and $\bw = \bh - \bg/2$. It holds that
\begin{displaymath}
\begin{split}
  & H^{\alpha}(\bv) H^{\kappa}(\bw) =  \sum_{\alpha'+\kappa' = \alpha+\kappa}
  a_{\alpha'\kappa'}^{\alpha\kappa} H^{\alpha'}(\sqrt{2}\bh) H^{\kappa'} \left( \frac{\bg}{\sqrt{2}} \right),
\end{split}
\end{displaymath}
where the coefficients $a_{\alpha'\kappa'}^{\alpha\kappa}$ are
defined in \eqref{eq:coea}.
\end{lemma}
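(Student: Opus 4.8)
The plan is to establish the change-of-variables identity by working one dimension at a time, since Hermite polynomials in $\bbR^3$ factorize as $H^{\balpha}(\bv) = \prod_{i=1}^3 H^{\alpha_i}(v_i)$, and the linear map $(\bv, \bw) \mapsto (\bh, \bg)$ acts coordinatewise. Thus it suffices to prove the scalar statement: for $v = h + g/2$ and $w = h - g/2$,
\begin{equation*}
  H^{a}(v) H^{c}(w) = \sum_{a' + c' = a + c} a_{a'c'}^{ac}\, H^{a'}(\sqrt{2}\,h) H^{c'}\!\left(\frac{g}{\sqrt{2}}\right),
\end{equation*}
where $a_{a'c'}^{ac}$ is given by \eqref{eq:coea}. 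Taking the product over $i = 1, 2, 3$ and using the product form of $a_{p,q}^{\lambda,\kappa}$ in \eqref{eq:ijl_A} then yields the three-dimensional claim.

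First I would recall the generating function for the (probabilist's) Hermite polynomials, $\sum_{n \geq 0} H^n(x) \frac{\xi^n}{n!} = \exp\big(x\xi - \tfrac{\xi^2}{2}\big)$. The idea is to form the double generating function $\sum_{a, c} H^a(v) H^c(w) \frac{\xi^a}{a!}\frac{\zeta^c}{c!} = \exp\big(v\xi - \tfrac{\xi^2}{2} + w\zeta - \tfrac{\zeta^2}{2}\big)$, substitute $v = h + g/2$, $w = h - g/2$, and collect terms in $h$ and $g$. The exponent becomes $h(\xi + \zeta) + \tfrac{g}{2}(\xi - \zeta) - \tfrac{1}{2}(\xi^2 + \zeta^2)$. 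To match the right-hand side I would introduce new variables $\sigma$ (dual to $\sqrt{2}\,h$) and $\tau$ (dual to $g/\sqrt{2}$) so that the right-hand side generating function is $\exp\big(\sqrt{2}\,h\,\sigma - \tfrac{\sigma^2}{2} + \tfrac{g}{\sqrt{2}}\,\tau - \tfrac{\tau^2}{2}\big)$; comparing the coefficients of $h$ and $g$ forces $\sigma = (\xi + \zeta)/\sqrt{2}$ and $\tau = (\xi - \zeta)/\sqrt{2}$, and one checks that the purely quadratic parts $-\tfrac12(\xi^2+\zeta^2)$ and $-\tfrac12(\sigma^2 + \tau^2)$ coincide under this linear substitution. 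Hence the two generating functions agree as functions of $(\xi, \zeta)$, and expanding $\sigma, \tau$ back in terms of $\xi, \zeta$ and extracting the coefficient of $\xi^a \zeta^c / (a!\,c!)$ produces an explicit sum which I would then identify with \eqref{eq:coea} via the binomial theorem.

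The extraction step is the one requiring care: one must expand $H^{a'}\big(\sqrt{2}\,h\big) H^{c'}\big(g/\sqrt{2}\big)$ against $\sigma^{a'}\tau^{c'}/(a'!\,c'!) = \big(\tfrac{\xi+\zeta}{\sqrt 2}\big)^{a'}\big(\tfrac{\xi - \zeta}{\sqrt 2}\big)^{c'}/(a'!c'!)$, apply the binomial theorem to both factors, and collect the total power $\xi^a \zeta^c$. This gives $a'_{a'c'}^{ac} = 2^{-(a'+c')/2} \sum_s (\pm)\binom{a'}{s}\binom{c'}{\cdots}$ type coefficients; reindexing the inner sum to match the range $s = \max(0, p-\kappa)$ to $\min(p,\lambda)$ and the sign $(-1)^{q - \lambda + s}$ in \eqref{eq:coea} is the main bookkeeping obstacle, but it is purely combinatorial once the generating-function identity is in place. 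An alternative, if one prefers to avoid generating functions, is to induct on $a$ using the three-term recurrence $H^{a+1}(x) = x H^a(x) - a H^{a-1}(x)$ together with $v = h + g/2$; this reduces each step to shifting indices in the sum and verifying that $a_{a'c'}^{ac}$ satisfies the corresponding recursion, but I expect the generating-function route to be cleaner. Either way, the three-dimensional result follows immediately by multiplicativity.
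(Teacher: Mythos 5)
Your proposal is correct, but note that the paper does not actually prove Lemma \ref{thm:Hermite_cv}: it defers entirely to the reference \cite{QuadraticCol}, so you are supplying a self-contained argument where the paper gives only a citation. Your route is sound. The reduction to one dimension is legitimate because $H^{\balpha}(\bv)=\prod_i H^{\alpha_i}(v_i)$ and the paper's own definition \eqref{eq:ijl_A} of $a_{p,q}^{\lambda,\kappa}$ as a product of scalar coefficients is exactly what the coordinatewise argument produces. The generating-function step checks out: with $\sigma=(\xi+\zeta)/\sqrt{2}$ and $\tau=(\xi-\zeta)/\sqrt{2}$ the substitution is orthogonal, so $\sigma^2+\tau^2=\xi^2+\zeta^2$ and the two exponents agree identically, which is the crux of the identity; homogeneity of $\sigma^{a'}\tau^{c'}$ in $(\xi,\zeta)$ is what forces the constraint $a'+c'=a+c$ in the sum. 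The "bookkeeping" you flag does close: extracting the coefficient of $\xi^a\zeta^c/(a!\,c!)$ from $2^{-(a'+c')/2}(\xi+\zeta)^{a'}(\xi-\zeta)^{c'}/(a'!\,c'!)$ gives $a!\,c!\,2^{-(a'+c')/2}\sum_j (-1)^{c'-a+j}\big[j!\,(a'-j)!\,(a-j)!\,(c'-a+j)!\big]^{-1}$, which is precisely \eqref{eq:coea} with $p=a'$, $q=c'$, $\lambda=a$, $\kappa=c$ and $s=j$; the summation limits $\max(0,a-c')\leqslant j\leqslant\min(a,a')$ coincide with $\max(0,p-\kappa)\leqslant s\leqslant\min(p,\lambda)$ because $a-c'=a'-c$ under the constraint $a'+c'=a+c$. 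What your approach buys is a complete, verifiable derivation of the coefficient formula rather than an appeal to an external source; the cost is only that the sign and index reconciliation must be done explicitly, which, as shown above, works without obstruction.
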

\begin{corollary}
  \label{thm:Hermit_split}
  Let $\bv = \bh + \bg/2$. We have
  \begin{displaymath}
    H^{\alpha}(\bv) = \sum_{\kappa + \lambda = \alpha}
    \frac{2^{-|\alpha|/2} \alpha!}{\kappa!\lambda !}
    H^{\kappa}\left(\sqrt{2} \bh\right)
    H^{\lambda} \left( \frac{\bg}{\sqrt{2}} \right).
  \end{displaymath}
\end{corollary}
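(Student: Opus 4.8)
The plan is to obtain the corollary by specialising Lemma~\ref{thm:Hermite_cv} to the case where the second Hermite index is the zero multi-index. Setting $\kappa = \boldsymbol{0}$ in Lemma~\ref{thm:Hermite_cv} and using $H^{\boldsymbol{0}} \equiv 1$ (immediate from the definition \eqref{eq:basis}), the factor $H^{\kappa}(\bw)$ on the left-hand side disappears, leaving
\[
H^{\alpha}(\bv) = \sum_{\alpha' + \kappa' = \alpha} a_{\alpha',\kappa'}^{\alpha,\boldsymbol{0}}\, H^{\alpha'}(\sqrt{2}\bh)\, H^{\kappa'}\!\left(\tfrac{\bg}{\sqrt{2}}\right).
\]
After renaming the summation indices $\alpha' \to \kappa$ and $\kappa' \to \lambda$, the whole task reduces to verifying the closed form $a_{\kappa,\lambda}^{\alpha,\boldsymbol{0}} = 2^{-|\alpha|/2}\alpha!/(\kappa!\lambda!)$ for indices subject to $\kappa + \lambda = \alpha$.

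To prove this identity I would use the product structure $a_{p,q}^{\lambda,\kappa} = \prod_{i=1}^{3} a_{p_i q_i}^{\lambda_i \kappa_i}$ and evaluate each scalar factor from \eqref{eq:coea} with upper index $0$. In that case the inner sum over $s$ runs from $\max(0,p_i)$ to $\min(p_i,\lambda_i)$, so it is non-empty only when $p_i \leqslant \lambda_i$ and then consists of the single term $s = p_i$. Invoking the constraint $\kappa + \lambda = \alpha$ componentwise — i.e.\ $p_i + q_i = \alpha_i$, with $\lambda_i$ there equal to $\alpha_i$ — forces $q_i - \lambda_i + p_i = 0$, so the sign $(-1)^{q_i - \lambda_i + s}$ equals $1$ and the factorial $(q_i - \lambda_i + s)!$ equals $0! = 1$. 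What remains is $2^{-(p_i + q_i)/2}\alpha_i!/(p_i!\,q_i!) = 2^{-\alpha_i/2}\alpha_i!/(p_i!\,q_i!)$, and taking the product over $i = 1,2,3$ yields exactly $2^{-|\alpha|/2}\alpha!/(\kappa!\lambda!)$, as desired.

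There is no serious obstacle; the only point requiring a little care is the order of operations in the last step: one must impose $\kappa + \lambda = \alpha$ \emph{before} simplifying \eqref{eq:coea}, because it is precisely this constraint that collapses the alternating sum defining $a_{pq}^{\lambda\kappa}$ to a single surviving term. As a consistency check — and, in fact, an alternative self-contained proof that does not go through Lemma~\ref{thm:Hermite_cv} — one can equate coefficients in the generating-function identity $\sum_{\alpha} \frac{H^{\alpha}(\bv)}{\alpha!}\,\bs^{\alpha} = \exp\!\big(\bs\cdot\bv - \tfrac12|\bs|^2\big)$: substituting $\bv = \bh + \bg/2$ and splitting the exponent as $\big(\bs\cdot\bh - \tfrac14|\bs|^2\big) + \big(\tfrac12\bs\cdot\bg - \tfrac14|\bs|^2\big)$, each summand is the generating function of $H^{\bullet}(\sqrt{2}\bh)$, respectively $H^{\bullet}(\bg/\sqrt{2})$, evaluated at $\bs/\sqrt{2}$; comparing the coefficients of $\bs^{\alpha}$ then reproduces the claimed formula at once.
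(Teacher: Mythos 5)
Your proof is correct and follows the route the paper intends: the corollary is obtained by setting $\kappa=\boldsymbol{0}$ in Lemma~\ref{thm:Hermite_cv}, and your evaluation of $a_{pq}^{\lambda,0}$ from \eqref{eq:coea} (single surviving term $s=p$, with the constraint $p+q=\lambda$ killing the sign and the last factorial) correctly yields $2^{-|\alpha|/2}\alpha!/(\kappa!\lambda!)$. The generating-function argument you add is a valid independent verification but is not needed.
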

The proof of Lemma \ref{thm:Hermite_cv} can be found in
\cite{QuadraticCol}. Next we will prove Theorem \ref{thm:coeA}. 

{\renewcommand\proofname{Proof of Theorem \ref{thm:coeA}}

\begin{proof} 
Using an integration by parts and the recursion formula of Hermite
polynomials 
\begin{equation}
  \label{eq:recursion_H}
  \frac{\partial}{\partial v_s} \Big(\mM(\bv) H^{\alpha}(\bv)\Big) =
 (-1)\mM(\bv)H^{\alpha+e_s}(\bv), \qquad s = 1, 2, 3,
\end{equation}
the coefficients $A_{\alpha}^{\lambda,\kappa}$ \eqref{eq:coeA_detail}
can be simplified as
\begin{equation}
  \label{eq:coeA_detail1}
  \begin{aligned}
    A_{\alpha}^{\lambda,\kappa} = \sum_{s,t
      =1}^3 & \frac{\Lambda}{(\alpha-e_s)!}  \int_{\bbR^3\times\bbR^3} 
    \lvert \bv-\bv^*\rvert^\gamma G_{st}(\bv-\bv^*) \\
    & H^{\alpha - e_s}(\bv^*) \mM(\bv)\mM(\bv^*) 
    \left(H^{\lambda}(\bv^{\ast})H^{\kappa+e_t}(\bv)-H^{\lambda}(\bv)H^{\kappa+e_t}(\bv^*)\right)\dd
    \bv \dd \bv^{\ast},
\end{aligned}
 \end{equation}
 where $G_{st}(\bv)=-v_sv_t+\delta_{st}|\bv|^2$.  Further
 simplification of \eqref{eq:coeA_detail1} follows the method in
 \cite{QuadraticCol}, where the velocity of the mass center is
 defined as $\bh = (\bv+\bv^{\ast})/2$ and the relative velocity is
 defined as $\bg = \bv - \bv^{\ast}$. Hence, it holds that 
    \begin{gather}
      \label{eq:var_change}
      \bv = \bh + \frac{1}{2} \bg, \quad
      \bv^{\ast} = \bh - \frac{1}{2} \bg, \quad
      |\bv|^2 + |\bv^{\ast}|^2 = \frac{1}{2} |\bg|^2 + 2|\bh|^2, \qquad
      \mathrm{d}\bv \, \mathrm{d}\bv^{\ast} = \mathrm{d}\bg \, \mathrm{d}\bh.
    \end{gather}
    Combing Lemma \ref{thm:Hermite_cv}, \eqref{eq:coeA_detail1} and
    \eqref{eq:var_change}, the integral in
    $A_{\alpha}^{\lambda,\kappa}$ can be rewritten with respect to
    $\bg$ and $\bh$
    \begin{equation}
      \label{eq:coeA_detail2}
      \begin{aligned}
        A_{\alpha}^{\lambda,\kappa} =2^{(\gamma/2+3 - |\alpha|)/2}\sum_{s,t=
          1}^3\sum_{p+q=k-e_s}
        \sum_{r+\beta=\lambda+\kappa} \int_{\bbR^3\times \bbR^3} \lvert \bg
        \rvert^\gamma  \frac{\Lambda}{p!q!}
        G_{st}(\bg) H^q(\bg) \mM(\bg) \mM(\bh)  \\
        \left(a_{(\beta+e_t)r}^{(\kappa+e_t)\lambda}H^r(\bg)H^{\beta+e_t}(\bh)H^p(\bh)
          - a_{r(\beta+e_t)}^{\lambda(\kappa+e_t)}H^{\beta+e_t}(\bg)H^{r}(\bh)H^p(\bh)\right)
        \dd \bg \dd \bh.
      \end{aligned}
    \end{equation}
    Using the orthogonality of Hermite polynomials, we can finally
    prove Theorem \ref{thm:coeA}.

  \end{proof}
}

\subsection{Proof of Theorem \ref{thm:coeF}}
\label{sec:Appendix_coeF}

In order to prove Theorem \ref{thm:coeF}, we will  introduce the lemma
below:
\begin{lemma}
\label{thm:coe_Y}
  For three spherical harmonics $Y_l^m$, $Y_{l_1}^{m_1}$ and
  $Y_{l_2}^{m_2}$, if $m \neq m_1 + m_2$, or $l \not\in [|l_1 - l_2|,
  l_1 + l_2]$, then 
  \begin{equation}
    \int_{\bbS^2} Y_{l_1}^{m_1}({\bn})Y_{l_2}^{m_2}({\bn})
    \overline{Y_{l}^m({\bn})} \dd {\bn} = 0.
  \end{equation}
  Especially, we have 
  \begin{equation}
    \label{eq:Y}
     Y_l^m({\bn})Y_1^{\mu}(\bn) =
     \sqrt{\frac{3}{4\pi}}\left(\eta_{l+1,m}^{\mu}Y_{l+1}^{m+\mu}({\bn})
       + (-1)^{\mu}\eta_{-l, m}^{\mu}Y_{l-1}^{m+\mu}({\bn})\right),
       \qquad \mu = -1, 0, 1,
  \end{equation}
where $\eta_{lm}^{\mu}$ is defined in \eqref{eq:coe_eta}.
\end{lemma}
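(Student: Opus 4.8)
The plan is to treat the two parts of Lemma~\ref{thm:coe_Y} in turn: first the selection rules governing when the triple integral vanishes, and then the explicit product formula \eqref{eq:Y}, which is the special case of that integral needed in the sequel.

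For the selection rule on the $m$'s I would simply exploit the azimuthal structure of the $Y_l^m$: since $Y_l^m$ carries the factor $\exp(\imag m\phi)$, the integrand of $\int_{\bbS^2}Y_{l_1}^{m_1}Y_{l_2}^{m_2}\overline{Y_l^m}\dd\bn$ depends on $\phi$ only through $\exp(\imag(m_1+m_2-m)\phi)$, and integrating over $\phi\in[0,2\pi)$ kills it unless $m=m_1+m_2$. For the range $l\in[|l_1-l_2|,\,l_1+l_2]$ I would invoke that the solid harmonic $|\bv|^lY_l^m(\bv/|\bv|)$ is a homogeneous polynomial of degree $l$, so that $Y_l^m$ restricted to $\bbS^2$ lies in the space of restrictions of polynomials of degree $\le l$, which equals $\bigoplus_{l'\le l}\mathcal H_{l'}$ with $\mathcal H_{l'}=\mathrm{span}\{Y_{l'}^{m'}\}$. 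Taking $l_1\ge l_2$ without loss of generality: if $l>l_1+l_2$ then $Y_{l_1}^{m_1}Y_{l_2}^{m_2}$ is the restriction of a polynomial of degree $\le l_1+l_2<l$, hence lies in $\bigoplus_{l'<l}\mathcal H_{l'}$ and is $L^2(\bbS^2)$-orthogonal to $Y_l^m$; if $l<l_1-l_2$ then $Y_{l_2}^{m_2}\overline{Y_l^m}=(-1)^mY_{l_2}^{m_2}Y_l^{-m}$ is a polynomial of degree $\le l_2+l<l_1$, hence orthogonal to $Y_{l_1}^{m_1}$. Either way the integral vanishes, and only the orthonormality of the $Y_l^m$ on $\bbS^2$ is used.

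For \eqref{eq:Y} I would expand $Y_l^mY_1^\mu=\sum_{L,M}c_{LM}Y_L^M$ with $c_{LM}=\int_{\bbS^2}Y_l^mY_1^\mu\overline{Y_L^M}\dd\bn$; the selection rule just proved forces $M=m+\mu$ and $L\in\{l-1,l,l+1\}$. The value $L=l$ is excluded by parity: under $\bn\mapsto-\bn$ one has $Y_L^M\mapsto(-1)^LY_L^M$, so $Y_l^mY_1^\mu$ has parity $(-1)^{l+1}$ and can only involve $Y_L^M$ with $L\equiv l+1\pmod 2$. Hence $Y_l^mY_1^\mu=c_+Y_{l+1}^{m+\mu}+c_-Y_{l-1}^{m+\mu}$ for scalars $c_\pm=c_\pm(l,m,\mu)$, and it only remains to identify them. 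For this I would rewrite $Y_1^\mu$ in Cartesian form, $Y_1^0=\sqrt{3/(4\pi)}\,n_3$ and $Y_1^{\pm1}=\mp\sqrt{3/(8\pi)}\,(n_1\pm\imag n_2)$, which turns \eqref{eq:Y} into the classical three-term recurrences for $\cos\theta\,Y_l^m$ and for $\sin\theta\,e^{\pm\imag\phi}\,Y_l^m$; feeding in the standard recursions for associated Legendre functions, namely $(2l+1)xP_l^m(x)=(l-m+1)P_{l+1}^m(x)+(l+m)P_{l-1}^m(x)$ together with the $m$-raising/lowering recursion, and carrying through the normalization factors $\sqrt{(2l+1)(l-m)!/(l+m)!}$, then matching with the definition \eqref{eq:coe_eta} yields $c_+=\sqrt{3/(4\pi)}\,\eta_{l+1,m}^\mu$ and $c_-=(-1)^\mu\sqrt{3/(4\pi)}\,\eta_{-l,m}^\mu$.

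The only delicate point, and the one I expect to consume most of the work, is this last matching of constants: one must reconcile the Condon--Shortley phase and the asymmetric $(l-m)!/(l+m)!$ normalization built into the paper's definition of $Y_l^m$ and $P_l^m$ with the compact, manifestly symmetric expression \eqref{eq:coe_eta} for $\eta_{lm}^\mu$ (including the $2^{|\mu|}$ in its denominator). I would keep this manageable by splitting into the cases $\mu=0$ and $\mu=\pm1$ and, for $\mu=\pm1$, further reducing to $m\ge0$ via $Y_l^{-m}=(-1)^m\overline{Y_l^m}$, so that exactly one associated-Legendre identity has to be checked by hand in each case; everything else is bookkeeping.
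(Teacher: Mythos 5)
Your argument is sound, but note that the paper does not actually prove Lemma \ref{thm:coe_Y}: the text simply refers the reader to Section 12.9 of Arfken, so there is no in-paper derivation to compare against step by step. What you propose is essentially the standard textbook derivation, assembled cleanly and correctly: the azimuthal factor $\exp(\imag(m_1+m_2-m)\phi)$ gives the $m$-selection rule; the observation that $|\bv|^{l}Y_l^m(\bv/|\bv|)$ is a homogeneous polynomial of degree $l$, together with the decomposition of restrictions of polynomials of degree at most $d$ into $\bigoplus_{l'\le d}\mathcal{H}_{l'}$, gives the triangle inequality on $l$ without any Clebsch--Gordan or representation-theoretic machinery (this is arguably the most elementary route available); parity correctly eliminates $L=l$ from the expansion of $Y_l^mY_1^\mu$; and the coefficients $c_\pm$ then follow from the three-term recurrences for $\cos\theta\,P_l^m$ and $\sin\theta\,e^{\pm\imag\phi}P_l^m$. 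I spot-checked your final matching for $\mu=0$: with the paper's conventions one has $\eta_{l+1,m}^0=\sqrt{((l+1)^2-m^2)/((2l+1)(2l+3))}$ and $\eta_{-l,m}^0=\sqrt{(l^2-m^2)/((2l-1)(2l+1))}$, which are exactly the coefficients in the classical recursion $\cos\theta\,Y_l^m=\eta_{l+1,m}^0Y_{l+1}^m+\eta_{-l,m}^0Y_{l-1}^m$, so your identification of $c_\pm$ is consistent with \eqref{eq:coe_eta}. You are also right that the only real labor is the bookkeeping for $\mu=\pm1$ (Condon--Shortley phases and the factor $2^{|\mu|}$ in \eqref{eq:coe_eta}); your Cartesian expressions for $Y_1^{\pm1}$ agree with the paper's definitions of $Y_l^m$ and $P_l^m$, so that reduction is safe, and the proposal would yield a self-contained proof of a statement the paper only cites.
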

The result of this lemma can be found in Section 12.9 of
\cite{Arfken}.

{\renewcommand\proofname{Proof of Theorem \ref{thm:coeF}}
\begin{proof}
  Noting that 
  \begin{equation}
  \label{eq:n_Y}
  \begin{aligned}
    n_1 = \sqrt{\frac{2\pi}{3}}\left(Y_1^1 - Y_1^{-1}\right), \qquad 
    n_2 = -\imag \sqrt{\frac{2\pi}{3}}\left(Y_1^1 + Y_1^{-1}\right), \qquad 
    n_3 = 2\sqrt{\frac{\pi}{3}}Y_1^0.
  \end{aligned}
\end{equation}


    
    

Based on Lemma \ref{thm:coe_Y} and the property of spherical harmonic
$\overline{Y_l^m(\bn)} = (-1)^m Y_l^{-m}(\bn)$, we can derive the
results in Theorem \ref{thm:coeF} with the orthogonality property of
spherical harmonics
\begin{equation}
  \label{eq:orth_sp}
  \int_{\bbS^2} Y_{l_1}^{m_1}(\bn) \overline{Y_{l_2}^{m_2}({\bn})}
  \dd{\bn}=\delta_{l_1l_2}\delta_{m_1m_2}.
\end{equation}
\end{proof}
}

\subsection{Computation of Coefficients 
  $C_{\hat{\balpha}}^{\balpha}$}
\label{sec:Appendix_CoeC}
In this section, we will briefly introduce the algorithm to calculate
$C_{\hat{\balpha}}^{\balpha}$, and the original algorithm is in
\cite{BurnettCol}. 

Define 
\begin{equation}
  \label{eq:S}
  S_{-1} = \frac{1}{2}(v_1  - \imag v_2), \quad S_0 = v_3, \quad S_1 =
  -\frac{1}{2}(v_1 + \imag v_2), 
\end{equation}
and the recursive formula of the basis functions \cite{Cai2018} is
\begin{equation}
  \label{eq:recursive}
  \begin{aligned}
    & S_{\mu}B_{\hat{\balpha}}(\bv)= \frac{1}{2^{|\mu|/2}}
    \left[\sqrt{2(\hat{\alpha}_1+\hat{\alpha}_3)+3}\eta_{\halpha_1+1,
        m}^{\mu}B_{\hat{\balpha} +(1, \mu,0)^T}(\bv) -
      \sqrt{2\halpha_3}\eta_{\halpha_1+1, \halpha_2}^{\mu}B_{\hat{\balpha}+(1,\mu,-1)^T}(\bv) \right. \\
    & \left. + (-1)^{\mu}\sqrt{2(\halpha_3+\halpha_1)+1}
      \eta_{-\halpha_1,\halpha_2}^{\mu}B_{\hat{\balpha}+(-1,\mu,0)^T}(\bv)
      -(-1)^{\mu}\sqrt{2(\halpha_3+1)}\eta_{-\halpha_1,\halpha_2}^{\mu}B_{\hat{\balpha}
        + (-1,\mu,1)^T}(\bv) \right],
  \end{aligned}
\end{equation}
where $ \eta_{lm}^{\mu}$ is defined in \eqref{eq:eta} and we set
$B_{\hat{\balpha}}(\bv) = 0$ if $|\halpha_2| > \halpha_1$ or either of
$\halpha_1, \halpha_3$ is negative. Based on the recursion formula of
Hermite polynomials
\begin{equation}
  \label{eq:recursive_h}
  v_s H^{\balpha}(\bv) = H^{\balpha+e_s}(\bv)
  + k_sH^{ \balpha - e_s}(\bv), \qquad s = 1,2,3,
\end{equation}
we can get the recursive formula to compute
$C_{\hat{\balpha}}^{\balpha}$, precisely 
\begin{equation}
  \label{eq:recursive_c}
  \begin{aligned}
    a_{\hat{\balpha} + e_2}^{(-1)}C_{\hat{\balpha} + e_1}^{\balpha} +
    b_{\hat{\balpha}+e_2}^{(-1)}C_{\hat{\balpha} - e_1 +
      e_3}^{\balpha} &= \frac{1}{2}k_1C_{\hat{\balpha}+e_2}^{\balpha -
      e_1} -
    \frac{\imag}{2} k_2 C_{\hat{\balpha} + e_2}^{\balpha - e_2}, \\
    a_{\hat{\balpha}}^{(0)}C_{\hat{\balpha}+e_1}^{\balpha} +
    b_{\hat{\balpha}}^{(0)}C_{\hat{\balpha} - e_1 + e_3}^{\balpha} &=
    k_3C_{\hat{\balpha}}^{\balpha - e_3}, \\
    a_{\hat{\balpha} - e_2}^{(1)}C_{\hat{\balpha}+e_1}^{\balpha} +
    b_{\hat{\balpha} - e_2}^{(1)}C_{\hat{\balpha} - e_1 + e_3}^{\balpha} &=
    -\frac{1}{2}k_1C_{\hat{\balpha} - e_2}^{\balpha - e_1} - \frac{\imag}{2}
    k_2C_{\hat{\balpha} - e_2}^{\balpha- e_2},
    \end{aligned}     
\end{equation}
where $|\balpha| = |\hat{\balpha}|_B$ and 
\begin{equation}
  \label{eq:coe_a}
  a_{\hat{\balpha}}^{(\mu)} = \frac{1}{2^{|\mu|/2}}\sqrt{(2(\halpha_1+\halpha_3)
    +3)}\eta_{\halpha_1+1,\halpha_3}^{\mu},
  \quad b_{\hat{\balpha}}^{(\mu)} = \frac{(-1)^{\mu+1}}{2^{|\mu|/2}}\sqrt{2(\halpha_3+1)
  }\eta_{-\halpha_1,\halpha_3}^{\mu}, \quad \mu = -1, 0, 1,
\end{equation}
As is stated in \cite{BurnettCol}, we solve all the coefficients
$C_{\hat{\balpha}}^{\balpha}$ by the order of $|\balpha|$, so that the
right-hand sides of \eqref{eq:recursive_c} are always known.  The
initial condition and the boundary conditions are
$C_{\boldsymbol{0}}^{\boldsymbol{0}}=1$ and
$C_{\hat{\balpha}}^{\balpha} = 0$ if $|\halpha_2| > \halpha_1$ or
either of $\halpha_1, \halpha_3$ is negative. Moreover, the time
complexity for computing all the coefficients
$C_{\hat{\balpha}}^{\balpha}$ with
$|\hat{\balpha}|_B = |\balpha| \leqslant M$ is $O(M^5)$.


\bibliographystyle{plain}
\bibliography{../article}
\end{document}